\def\@abssec#1{\vspace{.05in}\footnotesize \parindent .2in 
{\bf #1. }\ignorespaces}
\newtheorem{theorem}{Theorem}[section]
\newtheorem{lemma}[theorem]{Lemma}
\newtheorem{proposition}[theorem]{Proposition}
\newtheorem{remark}[theorem]{Remark}
\def \Rm {\mathbb R}
\newcommand{\eps}{\varepsilon}
\newcommand{\NN}{\mathbb N }
\newcommand{\dint}{\displaystyle\int}  
\newcommand{\bn}{\mathbf n}
\newcommand{\bx}{\mathbf x} \newcommand{\by}{\mathbf y}
\newcommand{\bz}{\mathbf z} 
 \newcommand{\bF}{\mathbf F}
 \newcommand{\bS}{\mathbf S}
\newcommand{\calC}{\mathcal C}
\newcommand{\calI}{\mathcal I}
\newcommand{\calL}{\mathcal L}
\newcommand{\calO}{\mathcal O}
\newcommand{\calM}{\mathcal M}
\newcommand{\calD}{\mathcal D}
\newcommand{\cout}[1]{}
\newcommand{\DST}{\displaystyle}
\def\un{{\mathbf{1}}\hspace{-0.24em}\mathrm{I}}
 \renewcommand{\arraystretch}{1.5}
\title{Small volume expansions for elliptic equations}
\author{Guillaume Bal \thanks{Department of Applied Physics and
    Applied Mathematics, Columbia University, New York NY, 10027;
    gb2030@columbia.edu; 
   } \and Olivier Pinaud
  \thanks{Universit\'e de Lyon,
Universit\'e Lyon 1, CNRS, UMR 5208 Institut Camille Jordan/ISTIL,
B\^atiment du Doyen Jean Braconnier,
43, blvd du 11 novembre 1918,
F - 69622 Villeurbanne Cedex, 
France; 
  pinaud@math.univ-lyon1.fr} }
\begin{document}
 
\maketitle


\begin{abstract}
  This paper analyzes the influence of general, small volume,
  inclusions on the trace at the domain's boundary of the solution to
  elliptic equations of the form $\nabla \cdot D^\eps \nabla u^\eps=0$
  or $(-\Delta + q^\eps) u^\eps=0$ with prescribed Neumann conditions.
  The theory is well-known when the constitutive parameters in the
  elliptic equation assume the values of different and smooth
  functions in the background and inside the inclusions. We generalize
  the results to the case of arbitrary, and thus possibly rapid,
  fluctuations of the parameters inside the inclusion and obtain
  expansions of the trace of the solution at the domain's boundary up
  to an order $\eps^{2d}$, where $d$ is dimension and $\eps$ is the
  diameter of the inclusion.  We construct inclusions whose leading
  influence is of order at most $\eps^{d+1}$ rather than the expected
  $\eps^d$. We also compare the expansions for the diffusion and
  Helmholtz equation and their relationship via the classical
  Liouville change of variables.
\end{abstract}


\renewcommand{\thefootnote}{\fnsymbol{footnote}}
\renewcommand{\thefootnote}{\arabic{footnote}}

\renewcommand{\arraystretch}{1.1}
\paragraph{keywords:}
  Small volume inclusions, asymptotic expansions, diffusion equation,
  Helmholtz equation, inverse problems.

\paragraph{AMS:} 35J15, 35B40, 65R20, 35R30.


\section{Introduction} 
\label{sec:intro} 

Asymptotic expansions for the influence of small volume inclusions for
elliptic and other equations is now well-established. We refer the
reader to e.g.
\cite{AK-04,SmallAbso-02,CapVogM2AN1-02,CMV-IP,FriedVog89,HPV-preprint}
and their references for a few historic and recent works on the
subject. A major advantage of such expansions is that they help us
understand what details of the constitutive parameters in the equation
may or may not be reconstructed from available boundary measurements.
Indeed, in the elliptic equations of interest in this paper, namely
the diffusion or conductivity equation and the Helmholtz equation, the
reconstruction of the constitutive parameters $X$ from knowledge of
the full Dirichlet-to-Neumann map $\Lambda$, the most general type of
information available at the domain's boundary, is an extremely
ill-conditioned problem. Available stability estimates for both types
of equations predict that the accuracy in the reconstruction is at
best logarithmic in the accuracy of the measurements. More precisely,
we have \cite{Al-JDE90,isakov-98}
$$\|X_1-X_2\|_{L^\infty(\Omega)} \leq C \Big| \log \| \Lambda_1 - \Lambda_2 \|_{\calL(H^{\frac 12}(\partial \Omega),H^{-\frac{1}2}(\partial \Omega) )}\Big|^{-\delta},
$$      
for some positive constant $C$ and $\delta \in (0,1)$, where $X_1$
and $X_2$ are two sets of parameters and $\Lambda_1$ and $\Lambda_2$
their corresponding measurements.     
 
For such severely ill-posed problems, only a limited number of degrees
of freedom may be reconstructed from even quite accurate measurements.
A natural way of limiting the number of degrees of freedom is to
assume that the constitutive coefficients are known throughout the
domain, except at some locations where unknown inclusions may be
present. The asymptotic expansions in the size of the inclusion
mentioned above thus provide a very efficient tool to understand what
may or may not be reconstructed from data with a given level of noise.
  
For elliptic equations, the existing works on the subject, see e.g.
\cite{AK-04,CapVogM2AN1-02}, typically assume the parameters jumps
across the interface of the inclusion. One of the main objectives of
this paper is to consider the case of more general inclusions whose
coefficient may vary at the small scale $\eps$ and need not ``jump''
from the values of the background parameters. We also want to stress
the similarities and differences between expansions for the diffusion
equation $\nabla\cdot D^\eps\nabla u^\eps=0$ and the Helmholtz
equation $(-\Delta+q^\eps)u^\eps=0$ with $q^\eps$ of order
$\eps^{-2+\eta}$ for $\eta\in[0,2]$. In both cases of the diffusion
equation and the Helmholtz equation when $\eta=0$, we need to
introduce local correctors and obtain a limiting influence at the
domain's boundary that is non-linear in the parameters inside the
inclusion.

The maximal leading term in the expansion is always of order
$\calO(\eps^d)$, the volume of the inclusion. We construct expansions
up to the order $\eps^{2d}$. Going beyond this order of accuracy
requires a more careful analysis of the decay properties of local
correctors at infinity than is available here, or the use of single
and double layer potentials as in \cite{AK-04} in the case of constant
coefficients inside and outside of the inclusion. Note that the
cross-talk between two inclusions of volume $\calO(\eps^d)$ is also a
term of order $\eps^{2d}$.  It seems therefore natural to stop the
expansion at the order $\calO(\eps^{2d})$ for the influence of any
given well-separated inclusions.

Because our inclusions are modeled by somewhat arbitrary parameters
that need not jump from the local value of the background parameter or
are not constant, the limiting polarization tensors need not satisfy
any property of positivity or definiteness. On the contrary, we show
that the polarization tensors vanish to first order for some types of
inclusions, whose influence at the domain's boundary is therefore at
most of order $\eps^{d+1}$ rather than $\eps^d$. Although we do not
explore this aspect here, the proposed asymptotic expansions may be
used to construct inclusions whose influence on the measurements is
minimized in a prescribed manner.

The rest of the paper is structured as follows. Section \ref{sec:diff}
is devoted to the derivation of the asymptotic expansions for the
diffusion equation. The main tool in the expansion is a decomposition
of the corresponding Green's function given in proposition
\ref{prop:decompNdiff}. The expansion obtained for smooth inclusions
is presented in theorem \ref{th:asymp} while the generalization to
more singular inclusions with possible discontinuities of the
coefficients across the inclusion's boundary is given in theorem
\ref{th2}. We compare our expansions with those obtained in
\cite{AK-04} for constant coefficients inside and outside of the
inclusions in proposition \ref{prop:jump}. Section \ref{subsec:prop}
presents some properties of the polarization tensors that appear in
the asymptotic expansions. In particular, proposition
\ref{prop:vanish} shows that the leading polarization tensor vanishes
for some non-vanishing diffusion coefficients inside the inclusion.
Some proofs of the results are postponed until section \ref{sec:proofth}.

Section \ref{sec:helm} addresses local variations of the potential in
a Helmholtz equation. The appropriate decomposition of the Green's
function is shown in proposition \ref{prop:decompN} and the main
result in theorem \ref{th:asympV}. The relationship between the
expansions for diffusion and Helmholtz equations in regards of the
Liouville change of variables is explored in section \ref{sec:rel}. We
show that the expansions in both settings agree up to order
$\eps^{d+2}$. Most proofs are postponed to section \ref{sec:proofth}.

\section{Perturbations of the diffusion problem}
\label{sec:diff}
In this section, we are interested in the analysis of small inclusions
in the diffusion or conductivity problem.  As we have mentioned in the
introduction, the reconstruction of diffusion or conductivity
coefficients from boundary measurements is a severely ill-posed
problem. One possible way to overcome this difficulty is to assume
that the background diffusion coefficient is known and that the
unknown part of the coefficient is localized and has small volume.

Under such hypotheses, asymptotic expansions of the perturbed field in
the volume of the inclusion have been derived in \cite{FriedVog89}
when the inclusion is perfectly reflecting or insulating. These
formulas have then been extended to more general inclusions in
\cite{CMV-IP}, and to higher orders in the volume and to domain with
Lipschitz boundaries in \cite{AK-04}. In those references, the
inclusion is modeled by a jump in the diffusion coefficient so that
its first order effect on the boundary measurements is proportional to
the inclusion's volume. The so-called polarization tensor contains the
information about the inclusion that is available at this level of the
asymptotic expansion. 

Such a setting for the diffusion coefficient prevents us from using
the well-known change of variable $q:=\frac{\Delta
  \sqrt{D}}{\sqrt{D}}$ that allows us to relate the diffusion equation
to the Helmholtz or Schr\"odinger equation.  Since one of the
objective of the paper is to show the equivalence of the asymptotic
expansions within the diffusion and Helmholtz frameworks, we first
consider a regular inclusion without jump and derive the corresponding
asymptotic expansions in section \ref{sec:smooth}.  We next generalize
these formulas to the case with jumps in the diffusion coefficient in
section \ref{subsec:non-smooth}. We also recover the formulas in
\cite{AK-04} in the special case of constant coefficients in the
background and the inclusion. Finally, we present in section
\ref{subsec:prop} some properties the polarization tensors involved in
the asymptotic formula.

\subsection{The case of smooth inclusions}
\label{sec:smooth}
We consider the following system of equations:
\begin{equation}
\label{eq:diff}
 \left\{ 
\begin{array}{rll}
 \nabla \cdot D^\eps \nabla u^\eps=0, \quad &\textrm{in}\; \Omega,\\
\DST D^\eps \frac{\partial u^\eps}{\partial \bn }=g, \quad 
 &\textrm{on}\; \partial \Omega, \quad&
\DST \int_{\partial \Omega} u^\eps d \sigma=0,
\end{array} \right.
\end{equation}
where $\Omega$ is a bounded open domain of dimension $d \geq 2$ with
Lipschitz boundary, $\sigma$ is the surface measure on $\partial
\Omega$, and $ g\in L^2(\partial \Omega)$ such that the following
compatibility condition holds $\int_{\partial\Omega}g d\sigma=0$. It
is assumed that $D^\eps$ is bounded from below by a positive constant
independent of $\eps$ and that $D^\eps$ satisfies the decomposition
$D^\eps (\bx) = D_0(\bx)+D_1(\frac{\bx-\bx_0}{\eps})$, where $0<
C_0'\leq D_0 \in \calC^\infty(\overline{\Omega})$, $D_1 \in
L^{\infty}(\Omega)$ and $D_1$ vanishing in $\Rm^d \backslash
\overline{B}$, $B$ being a bounded set with Lipschitz boundary. The
properties of $D^\eps$ are summarized below:
 \begin{equation}
\label{eq:hypD}
 \left\{ 
\begin{array}{ll}
 D^{\eps}(\bx)\geq C_0>0, \quad &\Omega \textit{ a.e.}, \\
D^\eps (\bx) = D_0(\bx), \qquad &\bx \in \overline{\Omega} \backslash \overline{\bx_0+\eps B},\\
D^\eps (\bx) = D_0(\bx)+D_1(\frac{\bx-\bx_0}{\eps}),\qquad& \bx \in \bx_0+\eps B,\\
D_0 \in \calC^\infty(\overline{\Omega}), \qquad D_1 \in L^\infty(\Omega).
\end{array}
\right. 
\end{equation}
We assume in addition that the domain of the inclusion is located away
from the boundary in the sense that there exists $d_0>0$
independent of $\eps$ such that
\begin{equation} \label{hypB}\textrm{dist}(\partial \Omega, \bx_0+\eps B)>d_0. \end{equation}
The Lax-Milgram lemma applied to (\ref{eq:diff})-(\ref{eq:hypD})
yields a unique variational solution $u^\eps \in H^{1}(\Omega)$. Let
us denote by $U$ the solution with background diffusion coefficient
$D_0$:
\begin{equation}
\label{eq:diffD0}
 \left\{ 
\begin{array}{rll}
 \nabla \cdot D_0 \nabla U=0, \quad &\textrm{in}\; \Omega,\\
\DST D_0\frac{\partial U}{\partial \bn }=g, \quad &\textrm{on}\; \partial \Omega,\qquad&
\DST \int_{\partial \Omega} U(\bx) d \sigma(\bx)=0,
\end{array} \right. 
\end{equation}
and introduce the related Green function $N \in \calD'(\Omega\times
\Omega)$ satisfying, for all fixed $\by$ in $\Omega$,
\begin{equation}
\label{eq:N}
 \left\{ 
\begin{array}{rll}
 \nabla_\bx \cdot D_0(\bx) \nabla_\bx N(\bx,\by)=-\delta(\bx-\by), 
 \quad &\textrm{in}\; \Omega,\\[2mm]
\DST D_0(\bx)\frac{\partial N(\bx,\by)}{\partial \bn_\bx }
 =-\frac{1}{|\partial \Omega|}, \quad &\textrm{on}\; \partial \Omega,\quad&
\DST \int_{\partial \Omega} N(\bx,\by) d \sigma(\bx)=0.
\end{array} \right. 
\end{equation}
For all $\bx \in \overline{\Omega}$, the Lax-Milgram lemma yields
again a unique variational solution $U \in H^{1}(\Omega)$ and standard
elliptic regularity results \cite{gt1} implies that $U \in
\calC^\infty(\Omega)$ since $D_0 \in \calC^\infty(\overline{\Omega})$.
We denote by $\Gamma$ the fundamental solution of the Laplacian,
namely
\begin{equation}
\label{eq:G}
\Gamma(\bx)= \left\{ 
\begin{array}{ll}
\DST -\frac{1}{2 \pi} \log |\bx|, \qquad d=2,\\
\DST \frac{1}{(d-2) |S_{d-1}|} \frac{1}{|\bx|^{d-2}}, \qquad d\geq 3,
\end{array} \right.
\end{equation}
where $|S_{d-1}|$ is the measure of the $(d-1)$-dimensional unit
sphere. Throughout the paper, we use the following multi-index
notations: for $i=(i_1,\cdots,i_d) \in \NN^d$, we define
$|i|=i_1+\cdots+i_d$, $\partial ^i f=\partial^{i_1}_1 f \cdots
\partial^{i_1}_d f$ and $\bx^i=x_1^{i_1}\cdots x_d^{i_d}$.  We also
define $i!=i_1!\cdots i_d!$.

One of the main tools in our asymptotic expansions is the following
decomposition of the Green function $N$:
\begin{proposition} \label{prop:decompNdiff}
  The Green function $N$ can be decomposed, for $(\bx,\by) \in \Omega
  \times \Omega$, as:
\begin{equation} \label{decompNdiff1}
N(\bx,\by)=D_0^{-1}(\bx) \Gamma(\bx-\by)+R_1(\bx,\by)+R_2(\bx,\by)+R_3(\by),
\end{equation}
where $R_3 \in \calC^\infty(\Omega)$; for all $\by$ fixed in $\Omega$,
$R_1(\cdot,\by) \in W^{1,p}(\Omega)$, with $1 \leq p < \frac{d}{d-2}$
when $d\geq 3$ and $p<\infty$ when $d=2$; and $R_2(\cdot,\by) \in
H^1(\Omega)$.  Moreover, $R_1$ is $\calC^\infty$ when $\bx \neq \by$,
$R_2 \in \calC^\infty(\Omega \times \Omega)$, and we have by
construction that:
\begin{equation} \label{decompNdiff2}
\nabla_\bx N(\bx,\by) =D_0^{-1}(\bx) \nabla_\bx \Gamma(\bx-\by)+\nabla_\bx R_2(\bx,\by).
\end{equation} 
Also, $N$ admits the following asymptotic expansion for $\bx \in B$,
$\by$ \textit{a.e.}  in $\partial \Omega$:
\begin{equation} \label{decompNdiff3}
\nabla _\bx N(\bx_0+\eps \bx,\by)=\sum_{|i|=1}^{d} \frac{\eps^{|i|}}{i!}  \nabla \bx^i \partial^i_\bx N(\bx_0,\by)+\calO(\eps^{d+1}),
\end{equation} 
where $\calO(\eps^{d+1})$ denotes a term bounded in $L^2(\partial
\Omega)$ by $C\eps^{d+1}$ uniformly in $\bx$.
\end{proposition}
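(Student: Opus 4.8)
The plan is to peel off from the Neumann function $N$, one at a time and in order of decreasing singularity, the leading parametrix terms of the elliptic operator $\calL_\bx:=\nabla_\bx\cdot D_0(\bx)\nabla_\bx$, and then to control the successive remainders by interior elliptic regularity together with a single $L^p$ estimate for a divergence-form right-hand side. First I would record the flux identity for the candidate leading term $N_0(\bx,\by):=D_0^{-1}(\bx)\Gamma(\bx-\by)$: from $\Delta\Gamma=-\delta$ a short computation yields $D_0\nabla_\bx N_0=\nabla_\bx\Gamma(\bx-\by)-\Gamma(\bx-\by)\nabla_\bx\log D_0(\bx)$, whence
\[
\calL_\bx N_0(\bx,\by)=-\delta(\bx-\by)-\nabla_\bx\cdot\big(\Gamma(\bx-\by)\,\nabla_\bx\log D_0(\bx)\big),
\]
together with a co-normal trace $D_0\partial_{\bn_\bx}N_0$ on $\partial\Omega$ that, because $\by\in\Omega$, is $\calC^\infty$ jointly in $(\bx,\by)\in\partial\Omega\times\Omega$. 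Consequently $R:=N-N_0$ solves the Neumann problem for $\calL_\bx$ with divergence-form source $\calL_\bx R=\nabla_\bx\cdot\bF$, where $\bF(\bx,\by):=\Gamma(\bx-\by)\nabla_\bx\log D_0(\bx)$, and with smooth boundary data. The key observation is that $\bF(\cdot,\by)\in L^p(\Omega)$ precisely for $p<d/(d-2)$ when $d\ge3$ (for every $p<\infty$ when $d=2$), since $\Gamma(\cdot-\by)$ belongs to that $L^p$ and $\nabla\log D_0\in\calC^\infty(\overline\Omega)$; this is where the exponent range of the statement originates.

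Next I would isolate the genuinely non-$H^1$ contribution. The only singularity of $\bF(\cdot,\by)$ is at the interior point $\by$, which sits at distance $\ge d_0$ from $\partial\Omega$ by \eqref{hypB}, so I would cut $\bF=\chi\bF+(1-\chi)\bF$ with $\chi$ a smooth cutoff equal to $1$ near $\by$ and supported in a small ball $B_\rho(\by)\Subset\Omega$. The piece $(1-\chi)\bF$ is $\calC^\infty(\overline\Omega)$ and its contribution is handled by Lax--Milgram and elliptic regularity; the compactly supported interior piece $\chi\bF\in L^p$ is handled by the Calder\'on--Zygmund bound for the Newtonian potential $\Gamma*\big(\nabla\cdot(\chi\bF)\big)$, valid for all $1<p<\infty$, followed by a corrector — smooth near $\partial\Omega$ since $\chi\bF$ vanishes there — that restores the Neumann condition and the zero boundary mean. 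Denoting by $R_1(\cdot,\by)$ the resulting solution of $\calL_\bx R_1=\nabla_\bx\cdot\bF$ in $\Omega$ with co-normal data $\bn_\bx\cdot\bF$ and $\int_{\partial\Omega}R_1\,d\sigma=0$, one obtains $R_1(\cdot,\by)\in W^{1,p}(\Omega)$ for the stated range of $p$, with norms uniform for $\by$ in compact subsets of $\Omega$; and, $\bF$ being $\calC^\infty$ on $\{\bx\neq\by\}$, interior elliptic regularity shows that so is $R_1$.

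Then I would set $R_3(\by):=\tfrac1{|\partial\Omega|}\int_{\partial\Omega}\big(N-N_0-R_1\big)\,d\sigma=-\tfrac1{|\partial\Omega|}\int_{\partial\Omega}N_0\,d\sigma$ — a $\calC^\infty$ function of $\by\in\Omega$, since $\Gamma(\bx-\by)$ is smooth for $\bx\in\partial\Omega$, $\by\in\Omega$ — and $R_2:=N-N_0-R_1-R_3(\by)$. Since $\calL_\bx R=\nabla_\bx\cdot\bF$ (by the computation above, the $\delta(\bx-\by)$ contributions of $\calL_\bx N$ and $\calL_\bx N_0$ cancelling), $\calL_\bx R_1=\nabla_\bx\cdot\bF$ and $\calL_\bx R_3(\by)=0$, we get $\calL_\bx R_2=0$ in $\Omega$; moreover the co-normal trace of $R_2$ on $\partial\Omega$ is $\calC^\infty$ jointly in $(\bx,\by)$ and $\int_{\partial\Omega}R_2\,d\sigma=0$. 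Hence $R_2(\cdot,\by)\in H^1(\Omega)$ by Lax--Milgram, and interior elliptic regularity for the smooth-coefficient operator $\calL$, used in both variables and with the smooth dependence on the parameter $\by\in\Omega$, upgrades this to $R_2\in\calC^\infty(\Omega\times\Omega)$. Identity \eqref{decompNdiff2} is then read off by differentiating \eqref{decompNdiff1} in $\bx$ and invoking the flux identity above, $R_1$ being built precisely so that its flux compensates the $\Gamma\nabla_\bx\log D_0$ term carried by $D_0\nabla_\bx N_0$. Finally, for \eqref{decompNdiff3}: for $\bx\in B$ and $\by\in\partial\Omega$ the points $\bx_0+\eps\bx$ and $\by$ stay at distance $\ge d_0$ by \eqref{hypB}, and there $N$ — hence $\nabla_\bx N$ — is $\calC^\infty$ in $\bx$, with all $\bx$-derivatives bounded uniformly in $\by\in\partial\Omega$ (the Green function being smooth off the diagonal, $D_0\in\calC^\infty$, and the relevant points staying away from $\partial\Omega$, so that interior estimates apply on a fixed neighborhood). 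Taylor-expanding $\bx\mapsto N(\bx_0+\eps\bx,\by)$ at $\eps=0$ and differentiating in $\bx$ produces the stated sum, in which the homogeneity-$\eps^{|i|}$ term $\tfrac{\eps^{|i|}}{i!}\nabla_\bx(\bx^i)\,\partial^i_\bx N(\bx_0,\by)$ vanishes for $|i|=0$; the Taylor remainder is $\eps^{d+1}$ times a bounded derivative of order $d+1$, hence $\calO(\eps^{d+1})$ uniformly in $\bx\in B$ and, as $|\partial\Omega|<\infty$, in $L^2(\partial\Omega)$.

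The step I expect to be the main obstacle is the construction of $R_1$: proving the sharp $W^{1,p}$ bound, $p<d/(d-2)$, for a divergence-form equation whose source $\nabla_\bx\cdot\bF$ fails to lie in $H^{-1}$ once $d\ge4$, on a merely Lipschitz domain. What makes it go through is exactly the separation hypothesis \eqref{hypB}, which confines the singularity of $\bF$ to the interior: the estimate then localizes to a neighborhood of $\by$ where $D_0$ may be frozen and one falls back on the classical constant-coefficient Calder\'on--Zygmund theory, the Lipschitz boundary playing no role.
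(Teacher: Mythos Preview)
Your route departs from the paper's precisely at the construction of $R_1$, and that is where the gap is. You define $R_1$ as the solution of the elliptic problem $\calL_\bx R_1=\nabla_\bx\cdot\bF$ with co-normal data $\bn_\bx\cdot\bF$; the paper instead defines $R_1$ \emph{pointwise} by the gradient condition $\nabla_\bx R_1(\bx,\by)=D_0^{-2}(\bx)\,\nabla D_0(\bx)\,\Gamma(\bx-\by)$, together with the normalization $\int_{\partial\Omega}R_1\,d\sigma=0$. With the paper's choice, \eqref{decompNdiff2} is a tautology: differentiating \eqref{decompNdiff1} gives $\nabla_\bx N=D_0^{-1}\nabla_\bx\Gamma+\Gamma\,\nabla_\bx D_0^{-1}+\nabla_\bx R_1+\nabla_\bx R_2$, and the second and third terms cancel by construction. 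Your $R_1$, by contrast, only satisfies $\nabla_\bx\cdot(D_0\nabla_\bx R_1-\bF)=0$, which says $D_0\nabla_\bx R_1-\bF$ is divergence-free, not that it vanishes pointwise; so the claim that ``$R_1$'s flux compensates the $\Gamma\nabla_\bx\log D_0$ term'' is unsupported, and \eqref{decompNdiff2} does not follow from your construction. Note too that the Calder\'on--Zygmund machinery you invoke is, in the paper's route, unnecessary: since $\nabla_\bx R_1$ is \emph{explicitly} $\Gamma$ times a smooth factor, the $W^{1,p}$ bound reduces to $\Gamma\in L^p_{\rm loc}$, with no PDE estimate needed.

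For \eqref{decompNdiff3}, you Taylor-expand $N$ directly using off-diagonal smoothness, whereas the paper represents $R_2(\bz,\by)$ as a double-layer integral over $\partial\Omega$ and passes to the limit $\by\to\partial\Omega$ via the jump relation, obtaining $R_2(\bz,\cdot)\in L^2(\partial\Omega)$ with $\calC^\infty$ dependence on $\bz\in\Omega'$. This is what justifies the $L^2(\partial\Omega)$ control of the traces $\partial^i_\bx N(\bx_0,\by)|_{\partial\Omega}$ that the remainder bound requires. Your appeal to ``interior estimates'' in $\bx$ while $\by\in\partial\Omega$ skips this: interior regularity in $\bx$ gives bounds in terms of some weak norm of $N(\cdot,\by)$ on a slightly larger set, and you still owe an argument that this norm is controlled uniformly as $\by$ reaches the Lipschitz boundary.
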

\begin{proof}  
Let $R_1$ be (uniquely) defined by
$$\nabla_\bx R_1(\bx,\by)=\frac{\nabla D_0(\bx)}{D_0^2(\bx)}
\Gamma(\bx-\by), \qquad \int_{\partial \Omega} R_1(\bx,\by)
d\sigma(\bx)=0,$$
and $R_3$ be defined as
$$
 |\partial \Omega| R_3(\by)=-\int_{\partial \Omega} D_0^{-1}(\bx) \Gamma(\bx-\by) d \sigma(\bx).
 $$
 Since $D_0>0$, $D_0 \in \calC^\infty(\overline{\Omega})$ and
 $\Gamma \in L^p_\textrm{loc}(\Rm^d)$ for the values of $p$ in the
 proposition, it follows that $R_1(\cdot,\by) \in W^{1,p}(\Omega)$.
 Moreover, $R_1$ is $\calC^\infty$ as soon as $\bx \neq \by$. In the
 same way, $R_3 \in \calC^\infty(\Omega)$ since $\Gamma(\bx) \in
 \calC^\infty(\Rm^d\backslash\{0\})$. We then verify that
 (\ref{decompNdiff1}) leads to (\ref{decompNdiff2}) and that plugging
 (\ref{decompNdiff1}) into (\ref{eq:N}) leads to the system, for
 $\by \in \Omega$:
 $$
\left\{ 
\begin{array}{rll}
 \nabla_\bx  \cdot D_0(\bx) \nabla_\bx R_2(\bx,\by)=0, \quad &\textrm{in}\; \Omega,\\[3mm]
\DST D_0\frac{\partial  R_2(\bx,\by)}{\partial \bn_\bx }=-\frac{1}{|\partial \Omega|}-\frac{\partial  \Gamma(\bx-\by)}{\partial \bn_\bx }, \quad &\textrm{on}\; \partial \Omega,\quad&
\DST \int_{\partial \Omega}  R_2(\bx,\by)d \sigma(\bx)=0,
\end{array} \right.
$$
which admits a unique weak solution thanks to the Lax-Milgram lemma
since we verify that
$\int_{\partial\Omega}(\frac{1}{|\partial\Omega|}+\frac{\partial\Gamma}
{\partial \bn_\bx})d\sigma(\bx)=0$.  Since $\partial_\by^\beta
\frac{\partial \Gamma}{\partial \bn_\bx }(\cdot-\by) \in L^2(\partial
\Omega)$ for any multi-index $\beta$ and $\by \in \Omega$, we deduce
that $\partial_\by^\beta R_2(\cdot,\by) \in H^1(\Omega)$, so that
elliptic regularity yields $\partial_\by^\beta R_2(\cdot,\by)
\in\calC^\infty(\Omega)$, and finally $R_2 \in \calC^\infty(\Omega
\times \Omega)$.

Moreover, $\partial_\by^\beta R_2(\cdot,\by)$ is bounded in
$H^1(\Omega)$ uniformly in $\by$ when $\by \in \Omega' \subset \subset
\Omega$. To prove (\ref{decompNdiff3}), we first remark from
(\ref{decompNdiff1}) that the trace $\left. \partial_\by^\beta
  N(\bz,\by) \right|_{\partial \Omega}$ is defined in $L^2(\partial
\Omega)$ uniformly in $\by$ when $\by \in \Omega'$ since $R_1\in
\calC^\infty(\overline{\Omega}\backslash \overline{\Omega'} \times
\Omega')$, $\partial_\by^\beta R_2(\cdot,\by) \in H^1(\Omega)$
uniformly in $\by \in \Omega'$, and $R_3 \in \calC^\infty(\Omega')$.
This allows us to apply Green's theorem and obtain, for any $(\bz,\by)
\in \Omega' \times \Omega$, that:
$$
R_2(\bz,\by)=- \int_{\partial \Omega} \left(\frac{1}{|\partial
    \Omega|}+\frac{\partial \Gamma(\bx,\by)}{\partial \bn_\bx }\right)
N(\bx,\bz)d\sigma(\bx).
$$
As $\by$ goes to $\partial \Omega$, the boundary integral converges
for Lipschitz domains $\Omega$, see \cite{AK-04}, to
$$
- \frac{1}{|\partial \Omega|} \int_{\partial \Omega}
N(\bx,\bz)d\sigma(\bx)- \textrm{p.v} \int_{\partial \Omega}
\frac{\partial \Gamma(\bx,\by)}{\partial \bn_\bx }N(\bx,\bz)
d\sigma(\bx) +\frac{1}{2} N(\by,\bz), \quad (\bz,\by) \in \Omega'
\times \partial \Omega,
$$
where p.v. stands for the Cauchy principal value and the above
integral operator is bounded in $L^2(\partial \Omega)$. The first term
belongs to $\calC^\infty(\Omega')$ and the second and the third
terms to $\calC^\infty(\Omega')$ with values in $L^2(\partial
\Omega)$. Using \eqref{decompNdiff2}, this allows us to expand $\nabla
_\bx N(\bx_0+\eps \bx,\by)$ and obtain (\ref{decompNdiff3}).
\end{proof}

We first consider the case of a smooth inclusion by adding the
hypothesis that $D_1$ is regular and compactly supported in $B$, that
is $D_1 \in W^{1,\infty}(\Omega)$, with support ${\rm supp} D_1\subset
B$. In such a context, the trace of $D_1$ vanishes on $\partial B$.
We have the following result:
\begin{theorem} \label{th:asymp}
  Assume that $D_1 \in W^{1,\infty}(\Omega)$ with support ${\rm supp}
  D_1\subset B$. Then the solution $u^\eps$ to
  (\ref{eq:diff})-(\ref{eq:hypD}) verifies the following asymptotic
  expansion, \textit{a.e.} on $\partial \Omega$:
\begin{equation*}
  \begin{array}{l}
\DST \left. u^\eps(\by) \right|_{\partial \Omega}=\left.U(\by) \right|_{\partial \Omega}-\sum_{|i|=1}^{d} \sum_{|j|=1}^{d}\frac{\eps^{d-2+|i|+|j|}}{i!j!}  M_{ij} \,\partial^j U (\bx_0)\,  \left. \partial^i_\bx N(\bx_0,\by) \right|_{\partial \Omega} +\calO(\eps^{2d})\\
\DST-\sum_{|i|=1}^{d} \sum_{|j|=1}^{d} \sum_{|k|=0}^{d}\sum_{l=0,\, l+|k|>0 }^d \frac{\eps^{d-2+|i|+|j|+|k|+l}}{i!j!k!l!} M^2_{ijkl}\partial^j U(\bx_0) \left(\partial^k D_0^{-1}\right) (\bx_0) \left. \partial^i_\bx N(\bx_0,\by)\right|_{\partial \Omega},
 \end{array}
\end{equation*}
where $M$ and $M^2$ are generalized polarization tensors given by
\begin{equation}
  \label{eq:tensors}
  \begin{array}{rcl}
M_{ij}&=& \DST \int_{B} D_1\left(\bx \right)  \nabla  (\bx^j+\phi_{j\, 0}^0(\bx)) \cdot \nabla \bx^i d\bx,\qquad i,j \in \NN^d,\\[3mm]
M^2_{ijkl}&=& \DST\int_{B} D_1\left(\bx \right)  \nabla \phi_{jk}^l(\bx) \cdot \nabla \bx^i d\bx, \qquad i,j,k \in \NN^d, \quad l \in \NN,
  \end{array}
\end{equation}
and the functions $\phi_{jk}^l$ are the unique solutions in
$H^1_\textrm{loc}(\Rm^d) \cap \calC^\infty(\Rm^d \backslash
\overline{B})$ to: 
\begin{equation}
  \label{eq:phijkl1}
  \left\{
\begin{array}{l}
\DST \nabla \cdot \left(D_0(\bx_0)+D_1(\bx) \right)\nabla \phi_{jk}^l\,\,=\,\,-\delta_l^0\, \nabla \cdot \left(D_1\left(\bx \right) \bx^k \nabla \bx^j \right)\\
\hspace{2cm}\DST -D_0(\bx_0)\sum_{|m|=1}^{l} \frac{l!\partial^m D_0^{-1} (\bx_0)}{m! (l-|m|)!}  \nabla \cdot \left(D_1\left(\bx \right) \bx^m \nabla \phi_{jk}^{l-|m|}(\bx)\right),\\
\phi_{jk}^l(\bx) = \calO(|\bx|^{1-d})\quad \textrm{as} \quad |\bx|\to \infty. 
\end{array} \right.
\end{equation}
Here, $\delta_l^0$ is the Kronecker symbol and the notation
$\calO(\eps^{2d})$ in the expansion represents a term bounded in
$L^2(\partial \Omega)$ by a constant depending on $\|D_1\|_{L^\infty}$
and on $\|g\|_{L^2(\partial \Omega)}$.
\end{theorem}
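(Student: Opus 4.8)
The plan is to use the standard two-step strategy for small-volume asymptotics: (i) set up an integral representation for the perturbation $w^\eps := u^\eps - U$ using the Green function $N$, (ii) rescale to the inclusion variable $\bx = (\by-\bx_0)/\eps$, identify the leading local problem and its correctors $\phi_{jk}^l$, and (iii) control the remainders using the decomposition of $N$ from Proposition \ref{prop:decompNdiff}. First I would subtract the two PDEs: since $\nabla\cdot D^\eps\nabla u^\eps = 0$ and $\nabla\cdot D_0\nabla U = 0$, the difference $w^\eps$ satisfies $\nabla\cdot D_0\nabla w^\eps = -\nabla\cdot\big((D^\eps-D_0)\nabla u^\eps\big) = -\nabla\cdot\big(D_1((\cdot-\bx_0)/\eps)\nabla u^\eps\big)$ with homogeneous Neumann data and zero boundary average. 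Integrating against $N(\bx,\by)$ and using \eqref{decompNdiff2} gives
\begin{equation*}
w^\eps(\by) = -\int_{\bx_0+\eps B} D_1\!\Big(\frac{\bx-\bx_0}{\eps}\Big)\,\nabla_\bx u^\eps(\bx)\cdot\nabla_\bx N(\bx,\by)\,d\bx,
\end{equation*}
and, since $\nabla_\bx N = D_0^{-1}\nabla_\bx\Gamma + \nabla_\bx R_2$ with $R_2$ smooth, the singular part of the integrand is integrable and everything is well-defined; the trace on $\partial\Omega$ is then expanded via \eqref{decompNdiff3}.

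Next I would analyze $\nabla u^\eps$ inside the inclusion. Rescaling $\bx = \bx_0+\eps\bx'$ and writing $v^\eps(\bx') := \eps^{-1}\big(u^\eps(\bx_0+\eps\bx') - u^\eps(\bx_0)\big)$, the function $v^\eps$ solves, on a large ball, $\nabla_{\bx'}\cdot\big(D_0(\bx_0+\eps\bx')+D_1(\bx')\big)\nabla_{\bx'}v^\eps = 0$ with a boundary behavior inherited from the Taylor expansion of $U$ (and of $N$) about $\bx_0$. The key is that $u^\eps$ near $\bx_0$ is well approximated by $U$, and $\nabla U$ has a convergent Taylor expansion $\nabla U(\bx_0+\eps\bx) = \sum_{|j|\ge 0}\frac{\eps^{|j|}}{j!}\nabla\bx^j\,\partial^j U(\bx_0)$ by ellipticity of $D_0$. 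Plugging the ansatz $v^\eps \sim \sum_{|j|\ge 1}\sum_{l\ge 0}\frac{\eps^{|j|+l}}{j!}\big(\text{coefficients of }\partial^j U\text{ and }\partial^k D_0^{-1}\big)\,\phi_{jk}^l$ into the rescaled equation and expanding $D_0(\bx_0+\eps\bx')$ in powers of $\eps$ produces exactly the recursive cell problems \eqref{eq:phijkl1}: the $l=0$ term solves the zeroth-order corrector equation with source $\nabla\cdot(D_1\bx^k\nabla\bx^j)$, and the $\partial^m D_0^{-1}$ terms at order $l$ arise from Taylor-expanding $D_0$ and its inverse around $\bx_0$. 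Existence, uniqueness, and the decay $\phi_{jk}^l = \calO(|\bx|^{1-d})$ follow from the classical exterior-problem theory (variational formulation in the Beppo–Levi / weighted space, with the source compactly supported in $B$ and having mean-value structure that kills the $\calO(|\bx|^{2-d})$ term).

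Then I would substitute the corrector expansion for $\nabla u^\eps$ and the expansion \eqref{decompNdiff3} for $\nabla_\bx N$ back into the integral representation, collect powers of $\eps$, and recognize the integrals over $B$ as the generalized polarization tensors $M_{ij}$ and $M^2_{ijkl}$ in \eqref{eq:tensors}; the extra factor $\eps^{d-2}$ comes from the $d$-dimensional volume element $\eps^d$ together with the $\eps^{-(d-1)}$-homogeneity of $\nabla\Gamma$ appearing through $\nabla_\bx N$, i.e. $\eps^d\cdot\eps^{-1}\cdot\eps^{|i|-1} = \eps^{d-2+|i|}$ after accounting for the rescaling of the $\bx^i$ factor. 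The main obstacle—and the step requiring real care—is the remainder estimate: I must show that (a) replacing $u^\eps$ by its corrector approximation in the inclusion costs only $\calO(\eps^{2d})$ in $L^2(\partial\Omega)$, which requires an energy estimate for $u^\eps - (\text{approximation})$ on the rescaled domain together with the decay rate of the correctors at infinity to control the mismatch of artificial boundary conditions at $|\bx'|\sim \eps^{-1}$, and (b) truncating both Taylor expansions (for $\nabla U$ and for $\nabla_\bx N$) at order $d$ leaves an $\calO(\eps^{2d})$ tail — here the precise exponent $\eps^{2d}$ is dictated by the requirement $|i|,|j|\le d$ so that $\eps^{d-2+|i|+|j|} = \eps^{2d-2}\cdot\eps^{|i|+|j|-... }$ is kept only up to $|i|+|j|\le d$, and the first omitted terms are $\calO(\eps^{2d-1})$ scaled against one more power of $\eps$ from the corrector decay, hence $\calO(\eps^{2d})$; the constant's dependence on $\|D_1\|_{L^\infty}$ and $\|g\|_{L^2(\partial\Omega)}$ is then tracked through Lax–Milgram and elliptic regularity bounds. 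The bookkeeping of which $(i,j,k,l)$ multi-indices survive, and verifying that the $\bx^m$-weighted lower-order correctors assemble correctly into the recursion, is tedious but routine once the scaling is fixed.
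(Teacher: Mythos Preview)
Your skeleton coincides with the paper's: integral representation $w^\eps(\by)=-\int D_1\,\nabla u^\eps\cdot\nabla_\bx N$, rescaling, corrector identification, and back-substitution with Taylor expansions of $U$ and $N$. The genuine difference is in the inner analysis. You propose to expand the rescaled PDE formally and control the remainder by an energy estimate that must handle the ``mismatch of artificial boundary conditions at $|\bx'|\sim\eps^{-1}$''---a matched-asymptotics step that can be made rigorous but is delicate, and which you leave unspecified. The paper avoids this entirely by restricting the integral representation \emph{itself} to $\by=\bx_0+\eps\by'$ with $\by'\in B$: using $\nabla_\bx N=D_0^{-1}\nabla\Gamma+\nabla R_2$ from Proposition~\ref{prop:decompNdiff} together with the homogeneity $\nabla\Gamma(\eps\cdot)=\eps^{1-d}\nabla\Gamma$, one obtains a closed integral equation on $B$ of the form $(I+T_{0,d})\,w^\eps(\bx_0+\eps\cdot)=(\text{Taylor source})+\eps^d\,(R_2\text{-term})$, where $T_{0,d}\Psi(\by)=\int_B D_1 D_{0,d}^{-1}(\bx_0+\eps\cdot)\nabla\Psi\cdot\nabla\Gamma(\cdot-\by)$. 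The corrector $\Psi^\eps$ is defined as the solution of this equation without the $R_2$-term, and the remainder $r^\eps=w^\eps(\bx_0+\eps\cdot)-\Psi^\eps$ then satisfies an integral equation on $B$ whose right-hand side is bounded directly via the smoothness of $R_2$ and Young's inequality, yielding $\|\nabla r^\eps\|_{L^2(B)}=\calO(\eps^{d+1})$. No boundary matching is needed, because the geometry of $\Omega$ is entirely absorbed into the smooth kernel $R_2$, which is automatically lower-order. The paper's existence theory for the $\phi_{jk}^l$ (Lemma~\ref{lemappend1}) is likewise integral-equation based---equivalent to, but packaged differently from, your Beppo--Levi variational approach.

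Two smaller corrections. First, your power counting is garbled: the $\eps^{-(d-1)}$-homogeneity of $\nabla\Gamma$ plays no role in the \emph{final} boundary formula, since for $\by\in\partial\Omega$ the function $\nabla_\bx N(\bx_0+\eps\bx,\by)$ is smooth in $\bx$ and is expanded by ordinary Taylor via \eqref{decompNdiff3}; the homogeneity is used only in the interior step to set up the corrector equation on $B$. The exponent $d-2+|i|+|j|$ is simply $\eps^d$ (volume) times $\eps^{|j|-1}$ (order of $\eps^{-1}\nabla\Psi^\eps$, equivalently the Taylor order of $\nabla U$) times $\eps^{|i|-1}$ (Taylor order of $\nabla_\bx N(\bx_0+\eps\cdot,\by)$). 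Second, the decay $\phi_{jk}^l=\calO(|\bx|^{1-d})$ is not a ``mean-value'' cancellation; it is automatic for any compactly supported divergence-form source $\nabla\cdot F$, since the solution is $\int_B F\cdot\nabla\Gamma(\cdot-\by)$ and $\nabla\Gamma(\bx-\by)=\calO(|\by|^{1-d})$ for $\bx\in B$.
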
 
\begin{remark}\rm
  The function $\phi_{jk}^0$ solves the following equation in $\Rm^d$:
\begin{eqnarray*}
\nabla \cdot \left(D_0(\bx_0)+D_1(\bx) \right)\nabla \phi_{jk}^0&=&-\nabla \cdot \left(D_1\left(\bx \right) \bx^k \nabla \bx^j \right),\\
\phi_{jk}^0(\bx) &=& \calO(|\bx|^{1-d})\quad \textrm{as} \quad |\bx|\to \infty, 
\end{eqnarray*}
so that $\phi_{jk}^l$ is computed from $\phi_{jk}^{m}$, $0\leq m<l$,
iteratively.
\end{remark}
\begin{remark} \rm \label{rem:alt}
We may recast the expansion in theorem \ref{th:asymp} as
\begin{equation*}
  \begin{array}{l}
\DST \left. u^\eps(\by) \right|_{\partial \Omega}=\left.U(\by) \right|_{\partial \Omega}-\sum_{|i|=1}^{d} \sum_{|j|=1}^{d}\frac{\eps^{d-2+|i|+|j|}}{i!j!}  M^\eps_{ij} \,\partial^j U (\bx_0)\,  \left. \partial^i_\bx N(\bx_0,\by) \right|_{\partial \Omega} +\calO(\eps^{2d}),
 \end{array}
\end{equation*}
where the $\eps$-dependent tensor $M^\eps$ is given by:
\begin{equation*}
   \begin{array}{rcl}
M^\eps_{ij}&=& \DST \int_{B} D_1\left(\bx \right)  \nabla  (\bx^j+\Psi^\eps_{j}(\bx)) \cdot \nabla \bx^i d\bx,\qquad i,j \in \NN^d,
  \end{array}
\end{equation*}
and the functions $\Psi^\eps_{j}$ are the unique solutions in
$H^1_\textrm{loc}(\Rm^d) \cap \calC^\infty(\Rm^d \backslash
\overline{B})$ to
\begin{displaymath}
  \begin{array}{rll}
   \nabla\cdot\big( 1+D_1(\bx) D_{0}^{-1}(\bx_0+\eps\bx)\big)
  \nabla\Psi^\eps_j &=& - \nabla \cdot\big( D_1(\bx) D_{0}^{-1}(\bx_0+\eps\bx)
  \nabla \bx^j\big),\\
\Psi^\eps_{j}(\bx) &=& \calO(|\bx|^{1-d})\quad \textrm{as} \quad |\bx|\to \infty. 
  \end{array}
\end{displaymath}
The asymptotic expansion of the theorem is then recovered by expanding
$\Psi^\eps_j$ and $D_0^{-1}(\bx_0+\eps\bx)$ in powers of $\eps$. 

There is another equivalent expansion to that of theorem
\ref{th:asymp} up to the order $\eps^{2d}$. We sketch its derivation
in the case where $D_0$ is constant. The right hand side of the
equation for $\Psi^\eps_j$ is equal to $-D_0^{-1} \nabla D_1 \cdot
\nabla \bx^j-D_0^{-1} D_1 \Delta \bx^j$. It turns out that an
appropriate linear combination of $\Delta \bx^j$ is of order
$\eps^{d+1}$, so that we can replace $\Psi^\eps_j$ in the definition of
$M^\eps_{ij}$ by $\Phi_j$ solution to
\begin{displaymath}
  \begin{array}{rll}
   \nabla\cdot\big( 1+D_1(\bx) D_{0}^{-1}\big)
  \nabla\Phi_j &=& - D_{0}^{-1} \nabla D_1(\bx)\cdot \nabla \bx^j,\\
\Phi_{j}(\bx) &=& \calO(|\bx|^{1-d})\quad \textrm{as} \quad |\bx|\to \infty. 
  \end{array}
\end{displaymath}
The appropriate linear combination is deduced from $\Delta
U(\bx_0+\eps \bx)=0$ and from Taylor expanding $U$ so as to obtain:
$$
0=\Delta U(\bx_0+\eps \bx)= \Delta \sum_{|j|=0}^{d}\frac{\eps^{|j|}}{j!}\,\partial^j U (\bx_0) \bx^j+\calO(\eps^{d+1}).
$$
\end{remark}
\begin{remark} \label{rem2}
\rm The leading order in the expansion is given by 
$$
\eps^d \sum_{|i|=|j|=1}M_{ij} \partial^i N(\bx_0, \by) \partial^j U(\bx_0).
$$
The polarization tensor $M^2$ contributes only to higher orders.
The polarization tensor $M$ captures the correction when the
background diffusion coefficient $D_0$ is constant in $\bx_0+\eps B$,
whereas $M^2$ is the correction that needs to be added when $D_0$ is
not constant in $\bx_0+\eps B$.  When $D_0$ is constant in $\bx_0+\eps
B$, then $M^2_{ijkl}=M^2_{ijkl} \delta_l^0$ so that the expansion then
reduces to the classical formula:
\begin{eqnarray*}
u^\eps(\by)&=&U(\by)+\sum_{|i|=1}^{d} \sum_{|j|=1}^{d}\frac{\eps^{d-2+|i|+|j|}}{i!j!}  M_{ij}\partial^i_\bx N(\bx_0,\by) \partial^j U (\bx_0) +\calO(\eps^{2d}).
\end{eqnarray*}
In this case, using the notation of remark \ref{rem:alt},
$\Psi^\eps_j$ no longer depends on $\eps$ and may be identified with
$\phi_{j0}^0$. Note that the latter formula also holds when $D_0$ is
non-constant away from the support of the inclusion $\bx_0+\eps B$ as
remark \ref{rem:alt} makes clear since only the values of $D_0^{-1}$
on the support of $D_1$ are involved.
\end{remark}
The proof of the theorem is given in section \ref{sec:proofth}. Its
main ingredients are the integral formulation of \eqref{eq:diff} and
the decomposition of the Green function given in proposition
\ref{prop:decompNdiff}.  Additional boundary effects, which are not
considered here, appear at the order $\calO(\eps^{2d})$ when the
geometry-dependent corrector $R_2(\bx,\bx_0+\eps \by)$ of proposition
\ref{prop:decompNdiff} is expanded in powers of $\eps$. When $D_0$ is
constant, a proper factorization based on the technique of double
layer potentials allow us to obtain arbitrarily accurate expansions;
see \cite{AK-04}.
\subsection{The case of singular inclusions} 
\label{subsec:non-smooth}
In the preceding section, we assumed that the perturbed diffusion
coefficient was regular.  We may generalize the above theorem to
include the case where $D_1$ is in $L^\infty(\Rm^d)$ with support in
$B$ and with a possibly non-vanishing trace (if it is defined) at the interior
boundary $\partial B$. This generalization is achieved by regularizing
the singular perturbation so that we can use the preceding result and
then by computing the limiting polarization tensors. We have the
following result:
\begin{theorem} \label{th2}
  Assume $D_1$ verifies \eqref{eq:hypD} with no further assumption on
  its interior trace on $\partial B$. Then $u^\eps$ admits the same expansion
  as in theorem \ref{th:asymp} with polarization tensors
  still given by \eqref{eq:tensors}, where
  $\phi_{jk}^{l}$ is now the unique solution in
  $H^1_\textrm{loc}(\Rm^d) \cap \calC^\infty(\Rm^d \backslash
  \overline{B})$ to:
  \begin{equation}
    \label{eq:phijkl2}
    \left\{
\begin{array}{l}
\DST  \Delta \phi_{jk}^{l} =0, \qquad  \bx \in \Rm^d/\overline{B}
\qquad \mbox{ with }\qquad 
\DST \phi_{jk}^l(\bx) = \calO(|\bx|^{1-d})\quad \textrm{as} \quad |\bx|\to \infty,
 \\[5mm]
\DST \nabla \cdot (D_0(\bx_0)+D_1(\bx)) \nabla \phi_{jk}^{l} =-\delta_l^0\, \nabla \cdot \left(D_1\left(\bx \right) \bx^k \nabla \bx^j \right)\\[0mm]
\quad \DST -D_0(\bx_0)\sum_{|m|=1}^{l}  \frac{l!\partial^m D_0^{-1} (\bx_0)}{m! (l-|m|)!} \nabla \cdot \left(D_1\left(\bx \right) \bx^m \nabla \phi_{jk}^{l-|m|}(\bx)\right), \qquad \bx \in B,\\[7mm]
\,\,\DST D_0(\bx_0) \frac{\partial  \phi_{jk}^{l}}{\partial \bn }\Big|_+ -(D_0(\bx_0)+D_1(\bx))\frac{\partial  \phi_{jk}^{l}}{\partial \bn }\Big|_- =\delta_l^0\, D_1\left(\bx \right) \bx^k \bn \cdot \nabla \bx^j\\
\quad \DST +D_0(\bx_0)\sum_{|m|=1}^{l} \frac{l!\partial^m D_0^{-1} (\bx_0)}{m! (l-|m|)!} D_1\left(\bx \right) \bx^m \frac{\partial  \phi_{jk}^{l-|m|}}{\partial \bn }\Big|_-, \qquad \bx\in\partial B,
\end{array} \right.
  \end{equation}
Here, $\bn$ is the outer normal to the boundary of $B$,
$\left.\frac{\partial \phi_{jk}^{l}}{\partial \bn }\right|_+$ (resp.
$\left.\frac{\partial \phi_{jk}^{l}}{\partial \bn }\right|_-$) denotes
the outer (resp. inner) trace of $\frac{\partial
  \phi_{jk}^{l}}{\partial \bn }$ on $\partial B$ as functions in
$H^{-\frac12}(\partial B)$.
\end{theorem}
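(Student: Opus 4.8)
\textbf{Proof proposal for Theorem \ref{th2}.}

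The plan is to obtain the singular case as a limit of the smooth case treated in Theorem \ref{th:asymp}. First I would regularize the perturbation: given $D_1\in L^\infty(\Rm^d)$ supported in $B$, introduce a family $D_1^\rho$ (say a mollification composed with a cutoff supported strictly inside $B$) such that $D_1^\rho\in W^{1,\infty}$, ${\rm supp}\, D_1^\rho\subset B$, $D_1^\rho\to D_1$ in $L^p(B)$ for every finite $p$ as $\rho\to0$, and $\|D_1^\rho\|_{L^\infty}\le \|D_1\|_{L^\infty}$. For each $\rho$ Theorem \ref{th:asymp} applies and gives the claimed expansion with correctors $\phi_{jk}^{l,\rho}$ solving \eqref{eq:phijkl1} with $D_1$ replaced by $D_1^\rho$, and polarization tensors $M^\rho$, $M^{2,\rho}$ given by \eqref{eq:tensors} with $D_1^\rho$. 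Since the constant in the $\calO(\eps^{2d})$ remainder depends only on $\|D_1\|_{L^\infty}$ and $\|g\|_{L^2}$, it is uniform in $\rho$; so it suffices to pass to the limit $\rho\to0$ in the finitely many polarization-tensor coefficients appearing in the expansion, for $\eps$ fixed.

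The core analytic step is therefore the stability of the corrector problem under the regularization, i.e. $\phi_{jk}^{l,\rho}\to\phi_{jk}^l$ in a sense strong enough to pass to the limit in $M^\rho_{ij}$ and $M^{2,\rho}_{ijkl}$. I would argue by induction on $|l|$, the base case $l=0$ being representative. Writing $a^\rho:=D_0(\bx_0)+D_1^\rho$ and $a:=D_0(\bx_0)+D_1$, both bounded below by $C_0$ and above uniformly, the equation for $\phi_{jk}^{0,\rho}$ is $\nabla\cdot a^\rho\nabla\phi_{jk}^{0,\rho}=-\nabla\cdot(D_1^\rho\,\bx^k\nabla\bx^j)$ with decay $\calO(|\bx|^{1-d})$ at infinity. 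The right-hand side is supported in $B$ and bounded in $L^2$ uniformly in $\rho$; the standard energy estimate in the homogeneous space $\dot H^1(\Rm^d)$ (for $d\ge3$; for $d=2$ work in the appropriate weighted space adapted to the prescribed decay) gives a uniform bound on $\nabla\phi_{jk}^{0,\rho}$ in $L^2(\Rm^d)$. Extract a weakly convergent subsequence; since $D_1^\rho\to D_1$ in, say, $L^d(B)$ and strongly in every $L^p(B)$, $p<\infty$, the products $a^\rho\nabla\phi_{jk}^{0,\rho}$ and $D_1^\rho\,\bx^k\nabla\bx^j$ converge in the sense of distributions, so the weak limit solves the limit equation; uniqueness of the decaying solution (Lax--Milgram / Liouville-type argument in the homogeneous space) identifies the full limit as $\phi_{jk}^0$. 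Then $M^\rho_{ij}=\int_B D_1^\rho\,\nabla(\bx^j+\phi_{jk}^{0,\rho})\cdot\nabla\bx^i\,d\bx\to\int_B D_1\,\nabla(\bx^j+\phi_{jk}^0)\cdot\nabla\bx^i\,d\bx=M_{ij}$ by combining weak $L^2$ convergence of $\nabla\phi_{jk}^{0,\rho}$ with strong $L^p$ convergence of $D_1^\rho$ (the test gradient $\nabla\bx^i$ being a fixed polynomial, hence in $L^\infty(B)$). The induction step for $l\ge1$ is identical in structure, using that the source terms now also involve $\phi_{jk}^{l-|m|,\rho}$, which converge strongly in $H^1_{\rm loc}$ by the inductive hypothesis, so all products again converge.

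The last point is to check that the limiting corrector $\phi_{jk}^l$ coincides with the unique $H^1_{\rm loc}(\Rm^d)\cap\calC^\infty(\Rm^d\setminus\overline B)$ solution of the transmission problem \eqref{eq:phijkl2}: the distributional equation $\nabla\cdot a\nabla\phi_{jk}^l=\text{(source supported in }B)$ on all of $\Rm^d$ is equivalent, since $a=D_0(\bx_0)$ and the source vanishes outside $B$, to $\Delta\phi_{jk}^l=0$ in $\Rm^d\setminus\overline B$, the interior PDE in $B$, and the jump condition on $\partial B$ obtained by testing against functions not vanishing on $\partial B$ and integrating by parts on each side; the regularity $\calC^\infty$ outside $\overline B$ follows from interior elliptic regularity for the Laplacian, and the prescribed decay is inherited from the uniform decay of the $\phi_{jk}^{l,\rho}$ (or simply re-imposed as the normalization selecting the unique solution). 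I expect the main obstacle to be the $d=2$ case and, more generally, making the functional-analytic framework for the whole-space corrector problem clean — choosing the right homogeneous/weighted Sobolev space in which Lax--Milgram applies, the decay condition is encoded, and the limit passage is legitimate — rather than any single estimate; once that framework is fixed, the regularization argument and the limit identification are routine.
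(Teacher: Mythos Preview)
Your proposal is correct and follows essentially the same regularization-and-limit strategy as the paper's proof: mollify $D_1$ to $D_1^\rho$, apply Theorem~\ref{th:asymp} with a remainder uniform in $\rho$, obtain uniform $\dot H^1$ bounds on the correctors, extract weak limits, and pass to the limit in the polarization tensors using the strong $L^p$ convergence of $D_1^\rho$ against the weak $L^2$ convergence of $\nabla\phi_{jk}^{l,\rho}$. Two small remarks: (i) you only get \emph{weak} convergence of $\nabla\phi_{jk}^{l-|m|,\rho}$ in $L^2$ (not strong $H^1_{\rm loc}$), but that is all that is needed since it is paired with the strongly convergent $D_1^\rho$; (ii) for the decay at infinity --- including the $d=2$ case you flag --- the paper avoids choosing a weighted space by instead passing to the limit in the integral representation $\phi_{jk}^{l,\rho}(\by)=\int_B(\cdots)\cdot\nabla\Gamma(\bx-\by)\,d\bx$ on compacts away from $B$, from which the $\calO(|\by|^{1-d})$ behavior is read off directly.
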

The proof of the theorem is postponed to section \ref{sec:proofth}.

Theorem \ref{th2} has been proved in \cite{AK-04} by using single and
double layer potential techniques when the background diffusion
coefficient $D_0$ is constant on the entire domain $\Omega$ and when
$D_1$ is constant on $B$. Our result generalizes that of \cite{AK-04}
to the case of non-constant $D_0$ and $D_1$ for which layers
techniques are not available. The first order of the expansion can
also be obtained from the general formula proved in
\cite{CapVogM2AN1-02} and in \cite{CMV-IP}. 
\begin{remark} \rm
  The expansion in remark \ref{rem:alt} still holds for singular
  inclusions with $\Psi^\eps_j$ now the unique solution in
  $H^1_\textrm{loc}(\Rm^d) \cap \calC^\infty(\Rm^d \backslash
  \overline{B})$ to
\begin{displaymath}
  \begin{array}{rll}
    \Delta \Psi^\eps_j&=&0\qquad \bx \in \Rm^d/\overline{B},\\[3mm]
   \nabla\cdot\big( 1+D_1(\bx) D_{0}^{-1}(\bx_0+\eps\bx)\big)
  \nabla\Psi^\eps_j &=& - \nabla \cdot\big( D_1(\bx) D_{0}^{-1}(\bx_0+\eps\bx)
   \nabla \bx^j\big),\qquad \bx \in B,\\[3mm]
\Psi^\eps_{j}(\bx) &=& \calO(|\bx|^{1-d})\quad \textrm{as} \quad |\bx|\to \infty,
  \end{array}
\end{displaymath}
equipped with the jump condition on $\partial B$:
$$
\frac{\partial  \Psi^\eps_{j}}{\partial \bn }\Big|_+ -(1+D_1(\bx)D_0^{-1}(\bx_0+\eps \bx))\frac{\partial  \Psi^\eps_{j}}{\partial \bn }\Big|_- =D_1\left(\bx \right) D_0^{-1}(\bx_0+\eps \bx) \bn \cdot \nabla \bx^j, \qquad \bx\in\partial B.
$$
As in the end of remark \ref{rem:alt}, we could also derive a
modified asymptotic expansion in the case of singular inclusions.
\end{remark}
The above asymptotic expansions are compatible with the slightly
different expressions for the generalized polarization tensors
obtained in \cite{AK-04}.  We have the following proposition:
\begin{proposition} \label{prop:jump}
  Assume that $D_1$ is a non vanishing constant on $B$ and that $D_0$
  is constant on the set $\bx_0+\eps B$. Then $u^\eps$ verifies the
  following expansion, \textit{a.e.} on $\partial \Omega$,
\begin{eqnarray*}
\left. u^\eps(\by)\right|_{\partial \Omega}&=&\left.  U(\by)\right|_{\partial \Omega}-\sum_{|i|=1}^{d} \sum_{|j|=1}^{d}\frac{\eps^{d-2+|i|+|j|}}{i!j!}  \calM_{ij} \, \partial^j U (\bx_0)\, \left. \partial^i_\bx N(\bx_0,\by)\right|_{\partial \Omega}+\calO(\eps^{2d}),
\end{eqnarray*}
where $\calM$ is the generalized polarization tensor given in \cite{AK-04} by
\begin{eqnarray*}
\calM_{ij}&=& D_1 \int_{\partial B} \bn \cdot \nabla (\bx^j+\phi_{j}(\bx)) \bx^i d\sigma(\bx),\qquad i,j \in \NN^d.
\end{eqnarray*}
The functions $\phi_{j}$ are the unique solutions in
$H^1_{\textrm{loc}}(\Rm^d)\cap \calC^\infty((\Rm^d/\overline{B}) \cup
B)$ to the problem:
$$
 \left\{ 
\begin{array}{l}
 \DST \Delta \phi_{j}=0, \quad  \bx \in (\Rm^d/\overline{B}) \cup B,\\[3mm]
\DST D_0\left.\frac{\partial  \phi_{j}}{\partial \bn }\right|_+ -(D_0+D_1)\left.\frac{\partial  \phi_{j}}{\partial \bn }\right|_-=D_1\,\bn \cdot \nabla \bx^j, \quad \bx \in \partial B,\\
\DST \phi_j(\by)-\Gamma(\by) D_0^{-1}D_1\int_{\partial B} \bn \cdot \nabla \bx^j d\sigma(\bx)=\calO(|\by|^{1-d}), \quad \textrm{when } |\by|\to \infty.
\end{array} \right.
$$
\end{proposition}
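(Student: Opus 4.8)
The plan is to deduce Proposition~\ref{prop:jump} from Theorem~\ref{th2} by checking that, under the present hypotheses, the volume polarization tensors $M_{ij}$ of \eqref{eq:tensors} produce the same boundary trace, up to $\calO(\eps^{2d})$, as the boundary tensors $\calM_{ij}$. First I would simplify the expansion of Theorem~\ref{th2}. Since $D_1$ is constant on $B$ and $D_0$ is constant on $\bx_0+\eps B$, we have $\partial^k D_0^{-1}(\bx_0)=0$ for $|k|\ge1$, so every summand of the $M^2$--part of that expansion with $|k|\ge1$ vanishes; and for $|k|=0$, $l\ge1$, the source and the jump data in \eqref{eq:phijkl2} defining $\phi_{j0}^l$ both reduce to zero, so $\phi_{j0}^l\equiv0$ (by uniqueness of the transmission problem) and $M^2_{ij0l}=0$. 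As in Remark~\ref{rem2}, the entire $M^2$--contribution therefore disappears and
\[
\left.u^\eps(\by)\right|_{\partial\Omega}=\left.U(\by)\right|_{\partial\Omega}-\sum_{|i|=1}^{d}\sum_{|j|=1}^{d}\frac{\eps^{d-2+|i|+|j|}}{i!j!}\,M_{ij}\,\partial^j U(\bx_0)\left.\partial^i_\bx N(\bx_0,\by)\right|_{\partial\Omega}+\calO(\eps^{2d}),
\]
with $M_{ij}=D_1\int_B\nabla(\bx^j+\phi_{j0}^0)\cdot\nabla\bx^i\,d\bx$ and $\phi_{j0}^0$ the solution of \eqref{eq:phijkl2} with $k=l=0$, where $D_0(\bx_0)$ now denotes the constant value of $D_0$ on $\bx_0+\eps B$.

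Next I would contract the correction term against Taylor coefficients. Put $a_j=\tfrac{\eps^{|j|}}{j!}\partial^j U(\bx_0)$, $b_i=\tfrac{\eps^{|i|}}{i!}\partial^i_\bx N(\bx_0,\by)$, and let $U_\eps=\sum_{|j|\le d}a_j\bx^j$ and $N_\eps=\sum_{|i|\le d}b_i\bx^i$ be the degree-$d$ Taylor polynomials at $\bx_0$ of $U$ and of $N(\cdot,\by)$. Because $D_0$ is constant near $\bx_0$, both are harmonic there, so each homogeneous component of their Taylor expansions is a harmonic polynomial and $\Delta U_\eps=\Delta N_\eps=0$. Using that $\phi_j$ is harmonic in $B$, the divergence theorem turns $\calM_{ij}=D_1\int_{\partial B}\bn\cdot\nabla(\bx^j+\phi_j)\big|_-\,\bx^i\,d\sigma$ into $D_1\int_B\nabla\bx^i\cdot\nabla(\bx^j+\phi_j)\,d\bx+D_1\int_B\bx^i\,\Delta\bx^j\,d\bx$, so that $\calM_{ij}-M_{ij}=D_1\int_B\nabla\bx^i\cdot\nabla(\phi_j-\phi_{j0}^0)\,d\bx+D_1\int_B\bx^i\,\Delta\bx^j\,d\bx$. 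Contracting with $a_j b_i$ and summing over $i$ and $j$, the second term collapses to $D_1\int_B N_\eps\,\Delta U_\eps\,d\bx=0$ and the first to $D_1\int_B\nabla N_\eps\cdot\nabla(\widetilde\Phi-\Phi)\,d\bx$, where $\Phi:=\sum_{|j|\le d}a_j\phi_{j0}^0$ and $\widetilde\Phi:=\sum_{|j|\le d}a_j\phi_j$. Hence everything reduces to the identity $\Phi=\widetilde\Phi$: granting it, $\sum_{i,j}(\calM_{ij}-M_{ij})a_jb_i=0$ for every $\eps$, so the correction term above agrees, term by term in powers of $\eps$, with the one in Proposition~\ref{prop:jump}, and the two expansions coincide up to $\calO(\eps^{2d})$.

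To obtain $\Phi=\widetilde\Phi$ I would sum \eqref{eq:phijkl2} (with $k=l=0$) against the weights $a_j$. Inside $B$ the source becomes $-D_1\Delta\big(\sum_j a_j\bx^j\big)=-D_1\Delta U_\eps=0$, so $\Delta\Phi=0$ there; on $\partial B$ the jump condition sums to $D_0\,\partial_{\bn}\Phi|_+-(D_0+D_1)\,\partial_{\bn}\Phi|_-=D_1\,\bn\cdot\nabla U_\eps$; and $\Phi=\calO(|\bx|^{1-d})$ at infinity. The function $\widetilde\Phi$ solves precisely the same problem: it is harmonic away from $\partial B$ with the identical jump, and its growth term $\Gamma\,D_0^{-1}D_1\sum_j a_j\int_{\partial B}\bn\cdot\nabla\bx^j\,d\sigma=\Gamma\,D_0^{-1}D_1\int_B\Delta U_\eps\,d\bx$ vanishes, so $\widetilde\Phi=\calO(|\bx|^{1-d})$ too. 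As $D_0$ and $D_0+D_1$ are bounded below by $C_0>0$, this transmission problem is uniformly elliptic, and the energy identity together with the decay at infinity yields uniqueness; hence $\Phi=\widetilde\Phi$.

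The main obstacle is this last step. The individual correctors genuinely differ when $|j|\ge2$ --- $\phi_{j0}^0$ has a non-vanishing Laplacian inside $B$, whereas $\phi_j$ is harmonic inside $B$ but carries a $\Gamma$--type tail at infinity --- and they coincide only after the harmonic-polynomial weighting $\{a_j\}$ is applied; both the cancellation of the source inside $B$ and the cancellation of the singular behaviour at infinity rest on the single identity $\Delta U_\eps=0$, i.e.\ on $U$ being harmonic near $\bx_0$. Pinning down the correct (interior) side of the trace in $\calM_{ij}$, and the routine verification that the $M^2$--tensor contributes nothing, are comparatively minor bookkeeping points.
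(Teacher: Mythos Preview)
Your proof is correct and follows the same overall strategy as the paper --- reduce to the $M$--expansion via Remark~\ref{rem2}, then compare $M_{ij}$ with $\calM_{ij}$ after contracting against the Taylor weights of $U$ and $N$ --- but you carry out the comparison more efficiently. The paper defines $f_j=\phi_{j0}^0-\phi_j$, shows that $F^\eps:=\sum_j a_j f_j$ solves a transmission problem with source $R^\eps:=\sum_j a_j\Delta\bx^j$, bounds $R^\eps=\calO(\eps^{d+1})$ (using only $\Delta U(\bx_0+\eps\bx)=0$), and then runs separate energy estimates in $d\ge3$ and $d=2$ to obtain $\|\nabla F^\eps\|_{L^2}=\calO(\eps^{d+1})$, together with some single--layer--potential manipulations to rewrite $\calM_{ij}$ as a volume integral. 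You short--circuit this by observing that, since $U$ (and $N(\cdot,\by)$) is harmonic near $\bx_0$, every homogeneous component of its Taylor expansion is a harmonic polynomial, so $\Delta U_\eps=\Delta N_\eps=0$ \emph{exactly}; hence the source in the transmission problem vanishes identically, the $\Gamma$--tails of the $\phi_j$ cancel, and uniqueness gives $\Phi=\tilde\Phi$ on the nose. This replaces the paper's quantitative argument (and its $d=2$/$d\ge3$ split) by a single exact identity. The paper's route is less sharp but perhaps more robust: it only needs $R^\eps$ to be small, not zero, and would survive settings where the background equation is not exactly $\Delta U=0$ near $\bx_0$.
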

The proof of the proposition is also postponed to section
\ref{sec:proofth}.
\subsection{Properties of the polarization tensor M} 
\label{subsec:prop}
In this section, we give some symmetry properties and estimates
satisfied by the tensors $M$ in theorems \ref{th:asymp} and \ref{th2}:
\begin{proposition}  \label{prop:M}
  Let $\alpha_i, \beta_i \in \Rm$, where $i$ belongs to a set a of
  multi-index $I$. Then, the polarization tensor $M$ verifies the
  following properties:
$$
\begin{array}{l}
 \DST (i) \qquad \sum_{i,j \in I} \alpha_i \beta_j M_{ij}= \sum_{i,j \in I} \alpha_i \beta_j M_{ji},\\
\DST (ii) \quad \int_B \frac{D_0(\bx_0) D_1(\bx)}{D_0(\bx_0)+D_1(\bx)} \Big| \nabla \Big( \sum_{i\in I} \alpha_i \bx^i \Big)\Big|^2 d\bx  \leq \sum_{i,j \in I} \alpha_i \alpha_j M_{ij} \leq \int_B D_1(\bx) \Big| \nabla \Big( \sum_{i\in I} \alpha_i \bx^i \Big)\Big|^2 d\bx.
\end{array}
$$
\end{proposition}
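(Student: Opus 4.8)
The plan is to reduce everything to the correctors $\phi_{j0}^0$ and to use the weak formulation of \eqref{eq:phijkl1} (resp.\ \eqref{eq:phijkl2}) with the correctors themselves as test functions. Write $D_*:=D_0(\bx_0)$, and for $\alpha=(\alpha_i)_{i\in I}$ set $P_\alpha(\bx):=\sum_{i\in I}\alpha_i\bx^i$ and $\Phi_\alpha:=\sum_{i\in I}\alpha_i\,\phi_{i0}^0$. By linearity $\Phi_\alpha$ is the unique decaying solution of $\nabla\cdot(D_*+D_1)\nabla\Phi_\alpha=-\nabla\cdot(D_1\nabla P_\alpha)$, and $\sum_{i,j}\alpha_i\beta_j M_{ij}=\int_B D_1\,\nabla(P_\beta+\Phi_\beta)\cdot\nabla P_\alpha$. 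The analytic fact I would use repeatedly is that $\phi_{i0}^0=\calO(|\bx|^{1-d})$ and $\nabla\phi_{i0}^0=\calO(|\bx|^{-d})$ at infinity; hence $\nabla\phi_{i0}^0\cdot\nabla\phi_{j0}^0\in L^1(\Rm^d)$ and, multiplying a test function by a cut-off $\chi(\cdot/R)$ and letting $R\to\infty$, each $\phi_{j0}^0$ (and hence $\Phi_\beta$) is an admissible test function in \eqref{eq:phijkl1}, the cut-off boundary term being $\calO(R^{-d})$. Since $\mathrm{supp}\,D_1\subset B$, testing the equation for $\Phi_\beta$ against $\Phi_\alpha$ gives $\int_B D_1\nabla P_\beta\cdot\nabla\Phi_\alpha=-\int_{\Rm^d}(D_*+D_1)\nabla\Phi_\alpha\cdot\nabla\Phi_\beta$, which is the workhorse identity.

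For (i), I would substitute $\nabla P_\alpha=\nabla(P_\alpha+\Phi_\alpha)-\nabla\Phi_\alpha$ in $\sum_{i,j}\alpha_i\beta_j M_{ij}=\int_B D_1\nabla(P_\beta+\Phi_\beta)\cdot\nabla P_\alpha$, and in the remaining term split $\nabla(P_\beta+\Phi_\beta)=\nabla P_\beta+\nabla\Phi_\beta$; by the workhorse identity that term collapses to $-D_*\int_{\Rm^d}\nabla\Phi_\alpha\cdot\nabla\Phi_\beta$. This leaves
\[
\sum_{i,j\in I}\alpha_i\beta_j M_{ij}=\int_B D_1\,\nabla(P_\alpha+\Phi_\alpha)\cdot\nabla(P_\beta+\Phi_\beta)\,d\bx+D_*\int_{\Rm^d}\nabla\Phi_\alpha\cdot\nabla\Phi_\beta\,d\bx,
\]
which is manifestly symmetric under $\alpha\leftrightarrow\beta$ (in fact $M_{ij}=M_{ji}$ entrywise), proving (i).

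For (ii), I would take $\beta=\alpha$. Using $\nabla(P_\alpha+\Phi_\alpha)\cdot\nabla P_\alpha=|\nabla P_\alpha|^2+\nabla\Phi_\alpha\cdot\nabla P_\alpha$ and the identity $\int_{\Rm^d}(D_*+D_1)|\nabla\Phi_\alpha|^2=-\int_{\Rm^d}D_1\nabla\Phi_\alpha\cdot\nabla P_\alpha$ (the equation for $\Phi_\alpha$ tested against itself), one finds
\[
\sum_{i,j\in I}\alpha_i\alpha_j M_{ij}=\int_B D_1\,|\nabla P_\alpha|^2\,d\bx-\int_{\Rm^d}(D_*+D_1)\,|\nabla\Phi_\alpha|^2\,d\bx.
\]
Since $D_*+D_1=D_0(\bx_0)+D_1\ge C_0>0$ on $B$ by \eqref{eq:hypD} (and $D_1$ vanishes off $B$), the last integral is nonnegative, which is the upper bound. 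For the lower bound I would complete the square in the positive form $(D_*+D_1)|\cdot|^2$: integrating the pointwise identity
\[
(D_*+D_1)\Big|\nabla\Phi_\alpha+\frac{D_1}{D_*+D_1}\nabla P_\alpha\Big|^2=(D_*+D_1)|\nabla\Phi_\alpha|^2+2D_1\,\nabla\Phi_\alpha\cdot\nabla P_\alpha+\frac{D_1^2}{D_*+D_1}|\nabla P_\alpha|^2
\]
(the last two terms supported in $B$) over $\Rm^d$ and using again $\int(D_*+D_1)|\nabla\Phi_\alpha|^2=-\int D_1\nabla\Phi_\alpha\cdot\nabla P_\alpha$ gives $\int_{\Rm^d}(D_*+D_1)|\nabla\Phi_\alpha|^2\le\int_B\frac{D_1^2}{D_*+D_1}|\nabla P_\alpha|^2$; inserting this into the previous display and simplifying $D_1-\frac{D_1^2}{D_*+D_1}=\frac{D_*D_1}{D_*+D_1}$ yields the lower bound, since $\nabla P_\alpha=\nabla\!\big(\sum_{i\in I}\alpha_i\bx^i\big)$.

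The step I expect to be the main obstacle is the rigorous justification of the integrations by parts over all of $\Rm^d$ underlying the workhorse identity: $P_\alpha$ grows polynomially while $\Phi_\alpha$ only decays, so one must check that the cut-off boundary contributions vanish — which is exactly where the decay rate $\calO(|\bx|^{1-d})$, rather than mere membership of $\Phi_\alpha$ in $H^1_{\mathrm{loc}}(\Rm^d)$, is needed, in particular in dimension $d=2$. Once those identities are in hand, everything else is algebra, and the same argument covers both the smooth correctors of \eqref{eq:phijkl1} and the singular ones of \eqref{eq:phijkl2}, since in the latter case the transmission condition on $\partial B$ is precisely the distributional form of $\nabla\cdot(D_*+D_1)\nabla\phi_{j0}^0=-\nabla\cdot(D_1\nabla\bx^j)$ on $\Rm^d$.
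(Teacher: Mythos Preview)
Your proposal is correct and follows essentially the same route as the paper: both rely on the weak identity $\int_{\Rm^d}(D_0(\bx_0)+D_1)\nabla\phi_{j0}^0\cdot\nabla\phi_{i0}^0=-\int_B D_1\nabla\bx^j\cdot\nabla\phi_{i0}^0$ (your ``workhorse identity'') to obtain a symmetric expression for (i) and the formula $\sum_{i,j}\alpha_i\alpha_j M_{ij}=\int_B D_1|\nabla P_\alpha|^2-\int_{\Rm^d}(D_0(\bx_0)+D_1)|\nabla\Phi_\alpha|^2$ for (ii). The only cosmetic differences are that the paper writes the symmetric form as $\int_B D_1\nabla P_\alpha\cdot\nabla P_\beta-\int_{\Rm^d}(D_0(\bx_0)+D_1)\nabla\Phi_\alpha\cdot\nabla\Phi_\beta$ rather than your version, and for the lower bound uses weighted Cauchy--Schwarz in place of your completion of the square---these are equivalent manipulations of the same identity.
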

\begin{proof} Using the definition of $M$, we have,
\begin{eqnarray*}
 \sum_{i,j \in I}  \alpha_i \beta_j M_{ij}&=& \int_{B} D_1(\bx)  \nabla  \Big(\sum_{j \in I} \beta_j (\bx^j+\phi_{j\, 0}^0(\bx))\Big) \cdot \nabla  \Big(\sum_{i\in I}  \alpha_i \bx^i\Big) d\bx,\\
\end{eqnarray*}
and the system solved by $\phi_{j\, 0}^0$ deduced from
(\ref{eq:phijkl2}) imply that,
\begin{equation} \label{eqphi}
 \int_{\Rm^d }\Big(D_0(\bx_0)+D_1(\bx) \Big)\nabla \phi_{j0}^{0} \cdot \nabla \phi_{i0}^{0}\, d\bx=-
 \int_{B } D_1(\bx) \nabla \bx^j \cdot \nabla  \phi_{i0}^{0}\, d\bx.
\end{equation}
Consequently,
\begin{eqnarray*}
 &&\sum_{i,j \in I}  \alpha_i \beta_j M_{ij}= \int_{B} D_1(\bx)  \nabla  \Big(\sum_{j \in I}  \beta_j \bx^j \Big) \cdot \nabla  \Big(\sum_{i\in I}  \alpha_i \bx^i\Big) d\bx\\
&&\quad- \int_{\Rm^d }\Big(D_0(\bx_0)+D_1(\bx) \Big) \nabla \Big(\sum_{j \in I}  \beta_j \phi_{j0}^{0} \Big)\cdot \nabla \Big(\sum_{i \in I}  \alpha_i \phi_{i0}^{0} \Big)\, d\bx
 =\sum_{i,j \in I}  \alpha_i \beta_j M_{ji}.
\end{eqnarray*}
Concerning item $(ii)$, we remark from the above equality that:
$$
\sum_{i,j \in I}  \alpha_i \alpha_j M_{ij} \leq \int_{B} D_1(\bx)  \Big|\nabla  \Big(\sum_{j \in I} \alpha_j \bx^j)\Big) \Big|^2d\bx.
$$
For the other inequality, we split the sum as:
$$
 \sum_{i,j \in I}  \alpha_i \alpha_j M_{ij}=\int_{B} D_1\Big|\nabla  \Big(\sum_{j \in I} \alpha_j \bx^j)\Big) \Big|^2d\bx+\int_{B} D_1 \nabla  \Big(\sum_{j \in I}  \alpha_j \phi_{j0}^{0} \Big) \cdot \nabla  \Big(\sum_{i\in I}  \alpha_i \bx^i\Big) d\bx.
 $$
 Since $D_0(\bx_0)+D_1(\bx)$ is strictly positive \textit{a.e.} in
 $\Omega$, the Cauchy-Schwarz inequality yields
\begin{eqnarray*}
&&\int_{B} D_1  \nabla  \Big(\sum_{j \in I}  \alpha_j \phi_{j0}^{0} \Big) \cdot \nabla  \Big(\sum_{i\in I}  \alpha_i \bx^i\Big) d\bx\\
&\leq& \Big( \int_{B} (D_0(\bx_0)+D_1)\Big|\nabla  \Big(\sum_{j \in I}  \alpha_j \phi_{j0}^{0}\Big) \Big|^2d\bx\Big)^{\frac{1}{2}}
 \Big( \int_{B} \frac{D^2_1}{D_0(\bx_0)+D_1}\Big|\nabla  \Big(\sum_{i \in I}  \alpha_i \bx^i \Big) \Big|^2d\bx\Big)^{\frac{1}{2}}.
\end{eqnarray*}
In the same way, equation (\ref{eqphi}) gives:
$$\Big( \int_{B} (D_0(\bx_0)+D_1)\Big|\nabla  \Big(\sum_{j \in I}  \alpha_j \phi_{j0}^{0}\Big) \Big|^2d\bx\Big)^{\frac12} \leq  \Big( \int_{B} \frac{D^2_1}{D_0(\bx_0)+D_1}\Big|\nabla  \Big(\sum_{i \in I}  \alpha_i \bx^i \Big) \Big|^2d\bx\Big)^{\frac{1}{2}},$$
so that  
\begin{eqnarray*}
 \sum_{i,j \in I}  \alpha_i \beta_j M_{ij}&\geq& \int_{B} D_1  \Big|\nabla  \Big(\sum_{j \in I} \alpha_j \bx^j)\Big) \Big|^2d\bx-\int_{B} \frac{D^2_1}{D_0(\bx_0)+D_1}\Big|\nabla  \Big(\sum_{i \in I}  \alpha_i \bx^i \Big) \Big|^2d\bx,\\
&=&\int_{B} \frac{D_0(\bx_0) D_1(\bx)}{D_0(\bx_0)+D_1(\bx)}\Big|\nabla  \Big(\sum_{i \in I}  \alpha_i \bx^i \Big) \Big|^2d\bx.
\end{eqnarray*}
This ends the proof.
\end{proof}
\bigskip

Item $(ii)$ of the proposition is very similar to the estimates
obtained at the first order in \cite{CapVogM2AN1-02}. Such estimates
can be applied to verify the definiteness or not of the polarization
tensor. In particular, it gives:
$$
|\alpha|^2 \int_B \frac{D_0(\bx_0) D_1(\bx)}{D_0(\bx_0)+D_1(\bx)} d\bx  \leq \sum_{|i|=1,|j|=1} \alpha_i \alpha_j M_{ij} \leq |\alpha|^2 \int_B D_1(\bx)  d\bx,
$$
so that for $D_1$ constant, $M$ is positive definite when $D_1>0$
and negative definite when $D_1<0$, as it was shown in
\cite{AK-04,CMV-IP}. The only possibility to cancel the above sum is
then to set $D_1=0$, which means that there is no inclusion.
Therefore, an inhomogeneity with constant diffusion coefficient
always generates a perturbation of order $\eps^d$ on the measurements.
The situation is different when $D_1$ is not constant. Indeed, when
$\int_B D_1(\bx) d\bx<0$, then $M$ is negative definite, and when $\int_B
\frac{D_1(\bx)}{D_0(\bx_0)+D_1(\bx)} d\bx>0$, then $M$ is positive
definite. But when  $\int_B D_1(\bx) d\bx>0$ while at the same time
$\int_B \frac{D_1(\bx)}{D_0(\bx_0)+D_1(\bx)} d\bx<0$, then $M$ might
not be definite for a suitable choice of $D_1$ as we now show as
an application of the intermediate value theorem.
We show first that the functional $M_{ij}: L^\infty(\Omega) \to \Rm$,
$D_1 \to M_{ij}[D_1]$ is continuous.
\begin{lemma}
  There exists a positive constant $C$, such that, for all finite
  multi-index $i$ and $j$, we have:
$$
|M_{ij}[D_1^1]-M_{ij}[D_1^2]| \leq C  \| D_1^1-D_1^2\|_{L^\infty(B)}.
$$
\end{lemma}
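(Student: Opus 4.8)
The plan is to reduce the claim to a stability estimate for the correctors $\phi_{j0}^0$ and then read it off from the definition \eqref{eq:tensors}. Write $\phi^m:=\phi_{j0}^0[D_1^m]$, $\delta D_1:=D_1^1-D_1^2$, $\delta\phi:=\phi^1-\phi^2$, and set $K:=\max\big(\|D_1^1\|_{L^\infty(B)},\|D_1^2\|_{L^\infty(B)}\big)$. By \eqref{eq:hypD} one has $D_0(\bx_0)+D_1^m\ge C_0>0$ a.e.\ on $B$ (and the coefficient equals $D_0(\bx_0)\ge C_0$ outside $B$), so the bilinear forms $a_m(u,v):=\int_{\Rm^d}(D_0(\bx_0)+D_1^m)\,\nabla u\cdot\nabla v\,d\bx$ are coercive and bounded in the gradient norm on the homogeneous space $\{v\in H^1_{\mathrm{loc}}(\Rm^d):\nabla v\in L^2(\Rm^d)\}$; the decay $\phi_{j0}^0=\calO(|\bx|^{1-d})$ prescribed in Theorem \ref{th2}, together with interior gradient estimates for the harmonic function $\phi^m$ outside $B$, gives $\nabla\phi^m=\calO(|\bx|^{-d})\in L^2(\Rm^d)$ for every $d\ge 2$, so $\phi^m$ and $\delta\phi$ belong to that space. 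Finally $\|\nabla\bx^i\|_{L^2(B)}<\infty$ since $\bx^i$ is a polynomial and $B$ is bounded.

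Next I would use the weak form of \eqref{eq:phijkl2} with $k=0$, $l=0$, namely $a_m(\phi^m,\psi)=-\int_B D_1^m\,\nabla\bx^j\cdot\nabla\psi\,d\bx$ for all admissible $\psi$ — the identity already used (with $\psi=\phi_{i0}^0$) to derive \eqref{eqphi}. Testing with $\psi=\phi^m$ and applying coercivity and Cauchy--Schwarz yields the a priori bound $\|\nabla\phi^m\|_{L^2(\Rm^d)}\le C_0^{-1}K\,\|\nabla\bx^j\|_{L^2(B)}$. Subtracting the weak formulations for $m=1,2$ and regrouping gives $a_1(\delta\phi,\psi)=-\int_B\delta D_1\,(\nabla\bx^j+\nabla\phi^2)\cdot\nabla\psi\,d\bx$; choosing $\psi=\delta\phi$, using coercivity, Cauchy--Schwarz and the previous bound, I obtain
$$\|\nabla\delta\phi\|_{L^2(\Rm^d)}\le C_0^{-1}\big(1+C_0^{-1}K\big)\,\|\nabla\bx^j\|_{L^2(B)}\,\|\delta D_1\|_{L^\infty(B)}.$$

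To conclude, I would split, using \eqref{eq:tensors},
$$M_{ij}[D_1^1]-M_{ij}[D_1^2]=\int_B\delta D_1\,\nabla(\bx^j+\phi^1)\cdot\nabla\bx^i\,d\bx+\int_B D_1^2\,\nabla\delta\phi\cdot\nabla\bx^i\,d\bx,$$
and estimate the two integrals by Cauchy--Schwarz: the first by $\|\delta D_1\|_{L^\infty(B)}\big(\|\nabla\bx^j\|_{L^2(B)}+\|\nabla\phi^1\|_{L^2(B)}\big)\|\nabla\bx^i\|_{L^2(B)}$, the second by $K\,\|\nabla\delta\phi\|_{L^2(B)}\|\nabla\bx^i\|_{L^2(B)}$. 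Inserting the a priori bound on $\|\nabla\phi^1\|_{L^2}$ and the stability estimate on $\|\nabla\delta\phi\|_{L^2}$, every factor besides $\|\delta D_1\|_{L^\infty(B)}$ is controlled by a constant depending only on $B$, $D_0(\bx_0)$, $K$ and the multi-indices $i,j$, which is precisely the asserted inequality.

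The one genuinely delicate point is the functional-analytic framework for the exterior/transmission problem \eqref{eq:phijkl2}: one must ensure the corrector and its variations live in the homogeneous Sobolev space on which $a_1$ is coercive, so that testing with $\delta\phi$ is legitimate and the weak formulation holds for $\psi=\delta\phi$ (by density of compactly supported test functions). This is handled entirely by the prescribed decay $\phi_{j0}^0=\calO(|\bx|^{1-d})$ and the uniform ellipticity from \eqref{eq:hypD}; once in place, everything above is a routine Lax--Milgram-type perturbation argument. I would also note that the constant $C$ necessarily depends on $K=\max_m\|D_1^m\|_{L^\infty(B)}$, i.e.\ the map is only locally Lipschitz, which nonetheless gives the continuity of $D_1\mapsto M_{ij}[D_1]$ claimed in the lemma and suffices for the intermediate-value argument that follows.
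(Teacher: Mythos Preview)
Your proof is correct and follows essentially the same route as the paper's: both subtract the weak formulations of the corrector equations, test with the difference $\delta\phi$ (the paper's $w_j$), and use coercivity of $a_1$ together with the a priori $L^2$ bound on $\nabla\phi^2$ to get $\|\nabla\delta\phi\|_{L^2(\Rm^d)}\le C\|\delta D_1\|_{L^\infty(B)}$, then feed this into the splitting of $M_{ij}[D_1^1]-M_{ij}[D_1^2]$. The only cosmetic differences are that the paper splits the $M$-difference into three pieces rather than two and is terser about the functional-analytic setup; your explicit remarks on the homogeneous space, the decay giving $\nabla\phi^m\in L^2(\Rm^d)$, and the dependence of $C$ on $K$ are all to the point and consistent with the paper's framework (cf.\ Lemma \ref{lemappend1} and \eqref{weakform}).
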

\begin{proof} 
  Take two perturbation $D_1^1$, $D_1^2$ in $L^\infty(\Omega)$ with
  support in $B$ and denote by $M[D_1^1]$, $M[D_1^2]$ the
  corresponding polarization tensors. Then:
\begin{eqnarray}
M_{ij}[D_1^1]-M_{ij}[D_1^2]&=& \int_{B} (D^1_1-D^2_1)  \nabla  \bx^j\cdot \nabla \bx^i d\bx+\int_{B} (D^1_1-D^2_1)  \nabla  \phi_{j0}^0[D_1^1]\cdot \nabla \bx^i d\bx \nonumber\\
&&+\int_{B} D^2_1  \nabla  \Big(\phi_{j0}^0[D_1^1]-\phi_{j0}^0[D_1^2]\Big)\cdot \nabla \bx^i d\bx. \label{difM}
\end{eqnarray}
Introducing $w_j:=\phi_{j0}^0[D_1^1]-\phi_{j0}^0[D_1^2]$ and using the
equations verified by $\phi_{j0}^0[D_1^1]$ and $\phi_{j0}^0[D_1^2]$,
we find the relation:
\begin{eqnarray*}
\int_{\Rm^d}(D_0(\bx_0)+D_1^1) \big| \nabla w_j\big|^2 d\bx&=&-\int_B (D_1^1-D_1^2) \nabla w_j \cdot \big(\nabla \bx^j+\nabla  \phi_{j0}^0[D_1^2]\big) d\bx.
\end{eqnarray*}
Since $\nabla \phi_{j0}^0$ is bounded in $L^2(\Rm^d)$, this yields the
estimate
$$
\|\nabla w_j\|_{L^2(\Rm^d)} \leq C \| D_1^1-D_1^2\|_{L^\infty(B)}.
$$
Using (\ref{difM}), we obtain the desired result.
\end{proof}
\begin{lemma} 
  \label{lem:vanish}
  There exists a perturbation $D_1 \in L^\infty(\Omega)$ with $\int_B
  D_1(\bx) d\bx \neq 0$, such that, for a given $1\leq l\leq d$, the
  component $M_{e_l,e_l}[D_1]$ of the polarization tensor $M$
  vanishes, where $e_l$ is the $l$-th vector of the canonical basis of
  $\Rm^d$.
\end{lemma}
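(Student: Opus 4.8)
The plan is to produce the vanishing of $M_{e_l,e_l}$ by an intermediate‑value argument inside the class of admissible perturbations $D_1$, using Proposition~\ref{prop:M}$(ii)$ together with a sharpened form of its upper bound that is already implicit in that proof. Specializing Proposition~\ref{prop:M} to the single multi‑index $I=\{e_l\}$ with $\alpha_{e_l}=1$ (so $\sum_{i\in I}\alpha_i\bx^i=x_l$ and $|\nabla x_l|\equiv1$) gives
\[
D_0(\bx_0)\int_B\frac{D_1(\bx)}{D_0(\bx_0)+D_1(\bx)}\,d\bx\ \le\ M_{e_l,e_l}[D_1]\ \le\ \int_B D_1(\bx)\,d\bx,
\]
while rereading the proof of that proposition with $\alpha=\beta=\delta_{\cdot,e_l}$ yields the stronger identity
\[
M_{e_l,e_l}[D_1]=\int_B D_1(\bx)\,d\bx-\int_{\Rm^d}\big(D_0(\bx_0)+D_1(\bx)\big)\,\big|\nabla\phi_{e_l,0}^0(\bx)\big|^2\,d\bx.
\]
Since $D_0(\bx_0)+D_1\ge C_0>0$, the subtracted term is nonnegative, and it vanishes only if $\phi_{e_l,0}^0\equiv0$; as $\phi_{e_l,0}^0$ solves $\nabla\cdot\big((D_0(\bx_0)+D_1)\nabla\phi_{e_l,0}^0\big)=-\partial_l D_1$ with decay at infinity, this in turn forces $\partial_l D_1=0$. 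But an $L^\infty$ function with vanishing $x_l$‑derivative is independent of $x_l$, which is impossible for a nonzero function supported in the bounded set $B$. Hence $M_{e_l,e_l}[D_1]<\int_B D_1$ strictly for every admissible $D_1\not\equiv0$; in particular $M_{e_l,e_l}[D_1]=0$ can only happen when $\int_B D_1>0$, consistently with the statement.

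Next I would introduce an explicit path. Fix $c\in(0,C_0')$ — so $D_1\ge -c$ keeps $D^\eps$ bounded below — and a measurable set $B'\subset B$ with $\frac{c\,|B|}{D_0(\bx_0)}<|B'|<|B|$; this interval is nonempty because $c<C_0'\le D_0(\bx_0)$. For $t\in[0,1]$ put
\[
D_1^t:=-c\,\mathbf 1_{B\setminus B'}+\big(-c+t(\Lambda+c)\big)\mathbf 1_{B'},
\]
where $\Lambda>0$ is a large constant to be chosen. Every $D_1^t$ is admissible, $t\mapsto D_1^t$ is Lipschitz into $L^\infty(\Omega)$, and $F(t):=\int_B D_1^t=-c|B|+t(\Lambda+c)|B'|$ is affine, strictly increasing, with $F(0)=-c|B|<0$. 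Requiring $\Lambda>c(|B|-|B'|)/|B'|$ gives $F(1)>0$, hence a unique $t_0\in(0,1)$ with $F(t_0)=0$, and $t_0$ is the only point of the path at which $\int_B D_1^t$ vanishes. At the two ends, $M_{e_l,e_l}[D_1^0]\le\int_B D_1^0=-c|B|<0$ by the upper bound, while the lower bound gives
\[
M_{e_l,e_l}[D_1^1]\ \ge\ D_0(\bx_0)\Big(\frac{-c}{D_0(\bx_0)-c}(|B|-|B'|)+\frac{\Lambda}{D_0(\bx_0)+\Lambda}|B'|\Big),
\]
whose limit as $\Lambda\to\infty$ equals $D_0(\bx_0)\big(|B'|-\tfrac{c(|B|-|B'|)}{D_0(\bx_0)-c}\big)$, which is strictly positive precisely because $|B'|>c|B|/D_0(\bx_0)$. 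So I may fix $\Lambda$ large enough that, additionally, $M_{e_l,e_l}[D_1^1]>0$.

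Finally, by the preceding lemma the functional $D_1\mapsto M_{e_l,e_l}[D_1]$ is Lipschitz on $L^\infty$, so $t\mapsto M_{e_l,e_l}[D_1^t]$ is continuous on $[0,1]$; being negative at $t=0$ and positive at $t=1$, it vanishes at some $t^\ast\in(0,1)$. Now $D_1^{t_0}$ is not identically zero (it equals $-c\ne0$ on $B\setminus B'$), so the strict inequality above gives $M_{e_l,e_l}[D_1^{t_0}]<\int_B D_1^{t_0}=0$; hence $t^\ast\ne t_0$, and since $F$ vanishes only at $t_0$ we get $\int_B D_1^{t^\ast}=F(t^\ast)\ne0$. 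The perturbation $D_1:=D_1^{t^\ast}$ then has the required properties. The genuinely delicate point is exactly this strict inequality $M_{e_l,e_l}[D_1^{t_0}]<0$: the bounds of Proposition~\ref{prop:M}$(ii)$ only yield $\le0$ at $t_0$, and the construction would collapse if the zero of $t\mapsto M_{e_l,e_l}[D_1^t]$ were pinned to the zero of $t\mapsto\int_B D_1^t$; it is to rule this out that the refined energy identity and the observation about $x_l$‑independence are needed. Everything else is a routine estimate.
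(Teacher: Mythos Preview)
Your proof is correct and shares the paper's intermediate-value strategy: build a continuous path $t\mapsto D_1^t$ in $L^\infty$ along which $M_{e_l,e_l}$ changes sign, and invoke the continuity lemma. The substantive difference is in how the constraint $\int_B D_1\neq0$ is secured at the zero of $M_{e_l,e_l}$. The paper asserts one can choose the path so that $\int_B D_1^t$ never vanishes; but since its endpoints satisfy $\int_B D_1^1<0$ and $\int_B \tfrac{D_0(\bx_0)D_1^2}{D_0(\bx_0)+D_1^2}>0$, and Jensen's inequality applied to the concave map $x\mapsto \tfrac{D_0(\bx_0)x}{D_0(\bx_0)+x}$ forces $\int_B D_1^2>0$, every continuous path between them must cross $\{\int_B D_1=0\}$, so the paper's justification of this step is at best incomplete. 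You take the complementary route: you let $\int_B D_1^t$ vanish at a unique $t_0$ and use the energy identity
\[
M_{e_l,e_l}[D_1]=\int_B D_1\,d\bx-\int_{\Rm^d}\big(D_0(\bx_0)+D_1\big)\,|\nabla\phi_{e_l,0}^0|^2\,d\bx
\]
(already present in the proof of Proposition~\ref{prop:M}) to show $M_{e_l,e_l}[D_1^{t_0}]<0$ strictly, since $D_1^{t_0}\not\equiv0$ forces $\partial_l D_1^{t_0}\neq0$ in $\calD'$ and hence $\phi_{e_l,0}^0\not\equiv0$. This guarantees the zero $t^\ast$ of $t\mapsto M_{e_l,e_l}[D_1^t]$ differs from $t_0$, so $\int_B D_1^{t^\ast}\neq0$. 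That strict-inequality observation is exactly the missing ingredient, and your explicit two-phase construction with the parameter $\Lambda$ is a clean way to realize both endpoint signs; the rest is routine.
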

\begin{proof} Setting $\alpha_i=\delta_i^{e_l}$ in item $(ii)$ of proposition 
  \ref{prop:M} leads to
  $$
  \int_B \frac{D_0(\bx_0) D_1(\bx)}{D_0(\bx_0)+D_1(\bx)} d\bx \leq
  M_{e_l,e_l} \leq \int_B D_1(\bx) d\bx.
  $$
  Now take a $D_1^1$ such that $\int_B D_1^1(\bx) d\bx<0$.  
  Therefore, $M_{e_l,e_l}[D_1^1]<0$. We then continuously transform
  $D_1^1$ into  $D_1^2$ such that $\int_B \frac{D_0(\bx_0)
    D^2_1(\bx)}{D_0(\bx_0)+D^2_1(\bx)} d\bx >0$ keeping $\int_B D_1^1
  d\bx$ non zero in the transformation. Such a transformation exists:
  let indeed $D_1^1$ be a bounded function in $B$ with positive and
  negative parts $D_+^1$ and $D_-^1$. We set $\int_B D_-^1 d\bx>\int_B
  D_+^1 d\bx$ so that $\int_B D_1^1(\bx) d\bx<0$. Letting the negative
  part $D_-^1$ continuously go to zero then gives a possible
  transformation. For the resulting $D_1^2$, we have
  $M_{e_l,e_l}[D_1^2]>0$. Since the functional $M_{e_l,e_l}[D_1]$ is
  continuous from $L^\infty(B)$ to $\Rm$, we deduce from the
  intermediate value theorem the existence of a $D_1^*$ with $\int_B
  D_1^* d\bx \neq 0$ such that $M_{e_l,e_l}[D_1^*]=0$. This ends the
  proof of the proposition.
\end{proof}
As a corollary of the previous result, we have
\begin{proposition} 
  \label{prop:vanish}
  There exists a perturbation $0\not\equiv D_1 \in L^\infty(\Omega)$ with
  spherical symmetry such that $M_{ij}\equiv0$. 
\end{proposition}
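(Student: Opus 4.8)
The plan is to deduce the proposition from Lemma \ref{lem:vanish} by a rotational-symmetry reduction. Here ``$M_{ij}\equiv 0$'' is to be understood, as in Remark \ref{rem2}, as the vanishing of the leading polarization tensor, i.e.\ of the $d\times d$ matrix $\mathbf M:=\big(M_{e_p,e_q}\big)_{1\le p,q\le d}$ formed from the components \eqref{eq:tensors} with $|i|=|j|=1$; we shall produce a radially symmetric $0\not\equiv D_1$ for which $\mathbf M=0$. Accordingly I would take the reference inclusion $B$ to be a ball $B(0,\rho)$ and look for $D_1$ of the radial form $D_1(\bx)=f(|\bx|)$ with $f\in L^\infty(0,\rho)$.

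First I would note that the construction in the proof of Lemma \ref{lem:vanish}, and in the continuity lemma preceding it, can be carried out entirely inside the closed subspace of radial functions of $L^\infty(B)$: the auxiliary perturbations $D_1^1$ and $D_1^2$ may be chosen radial (e.g.\ piecewise constant on concentric annuli), the connecting path stays radial, and the Lipschitz bound for $D_1\mapsto M_{e_1,e_1}[D_1]$ restricts to this subspace. The intermediate value theorem then supplies a radial $D_1^\ast$ with $\int_B D_1^\ast\,d\bx\neq 0$ and $M_{e_1,e_1}[D_1^\ast]=0$.

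Next, for $B$ a ball and $D_1$ radial, I would establish the $O(d)$-covariance of the correctors $\phi_q:=\phi_{e_q\,0}^0$, which by \eqref{eq:phijkl1}--\eqref{eq:phijkl2} solve $\nabla\cdot\big(D_0(\bx_0)+D_1\big)\nabla\phi_q=-\partial_q D_1$ in $\Rm^d$ with $\phi_q=\calO(|\bx|^{1-d})$ at infinity. For $R\in O(d)$ the change of variables $\bx\mapsto R\bx$ fixes $B$ and the constant $D_0(\bx_0)$, leaves the radial $D_1$ invariant, and commutes with the divergence-form operator; comparing the resulting right-hand sides and using uniqueness in $H^1_{\mathrm{loc}}(\Rm^d)\cap\calC^\infty(\Rm^d\backslash\overline{B})$ gives $\phi_q(R\bx)=\sum_m R_{qm}\phi_m(\bx)$. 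Since $M_{e_p,e_q}=\int_B D_1(\bx)\big(\delta_{pq}+\partial_p\phi_q(\bx)\big)\,d\bx$, inserting this change of variables and the covariance relation yields $M_{e_p,e_q}=\sum_{m,n}R_{pn}R_{qm}\,M_{e_n,e_m}$ for every $R\in O(d)$, i.e.\ $\mathbf M=R\,\mathbf M\,R^{\top}$. Hence $\mathbf M$ commutes with every rotation and must be a scalar multiple $c\,I_d$ of the identity.

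Finally, taking $D_1=D_1^\ast$ from the first step forces $c=M_{e_1,e_1}[D_1^\ast]=0$, whence $\mathbf M=0$ with $D_1^\ast\not\equiv 0$, as claimed. (A parity argument moreover gives $M_{ij}=0$ for all $|i|+|j|$ odd when $D_1$ is radial, although higher generalized tensors, such as the component with $\bx^i=\bx^j=x_1^2$, need not vanish.) The one delicate point is the first step — verifying that the construction of Lemma \ref{lem:vanish} survives the restriction to radial perturbations while keeping $\int_B D_1\neq 0$ at the zero of $M_{e_1,e_1}$ produced by the intermediate value theorem; the orthogonal-covariance computation and the reduction $\mathbf M=cI_d$ are standard.
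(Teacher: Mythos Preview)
Your proof is correct and follows essentially the same approach as the paper's: reduce to radial $D_1$, use rotational symmetry to force the leading tensor to be a scalar multiple of the identity, then invoke Lemma \ref{lem:vanish} to kill that scalar. The paper states this in two sentences, asserting $M_{ij}=M_0\delta_i^j$ for $|i|=|j|=1$ without justification; you have supplied the $O(d)$-covariance computation and have also made explicit the point the paper glosses over, namely that the intermediate-value construction in Lemma \ref{lem:vanish} can be run entirely within radial perturbations.
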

\begin{proof}
  Consider an inclusion with spherical symmetry. We find that
  $M_{ij}=M_0 \delta_i^j$ when $|i|=|j|=1$ so that the above lemma
  yields the existence of non-vanishing perturbation such that $M_0=0$
  and consequently no term of order $\eps^d$ appears in the asymptotic
  expansion.
\end{proof}
The latter result is to be compared with the case where $D_1$ is
constant for which there is always a contribution of order $\eps^d$ in
the expansion provided the constant is not zero.

\section{Perturbations in  the Helmholtz equation}
\label{sec:helm}
This section addresses the problem of small-volume inhomogeneities in
the Helmholtz equation. As we did for the diffusion equation, we
derive an asymptotic expansion of the perturbed solution in the volume
of the inclusions.

\subsection{Asymptotic expansion and polarization tensors}
\label{sec:asexphelm}
We consider the following Helmholtz (or Schr\"odinger) equation
posed in a bounded Lipschitz domain $\Omega$ of $\Rm^d$, $d \geq 2$,
and with $d\leq5$ for technical reasons:
\begin{equation}
\label{eq:v}
 \left\{ 
\begin{array}{l}
 \DST-\Delta v^\eps(\bx)+\Big(q_0(\bx)+\frac{1}{\eps^{2-\eta}} q_1\Big(\frac{\bx-\bx_0}{\eps} \Big) \Big)\, v^\eps(\bx)=0, \quad  \bx \in \Omega,\\[2mm]
\DST  \frac{\partial v^\eps}{\partial \bn } =g \in L^2(\partial \Omega)\quad \textrm{on } \partial \Omega,
\end{array} \right.    
\end{equation}      
where $\bx_0$ is a given point in $\Omega$, $q_0 \in L^\infty(\Omega)$
is the background index or potential, and $q_1 \in L^\infty(\Omega)$
is a local perturbation, with support localized in a bounded Lipschitz
domain $B$. We consider the case with only one inclusion, knowing that
the results below generalize to the setting with several
well-separated inclusions so long as the maximal order in the
expansion is sufficiently small so that the inclusions do not interact
at that order. The perturbation has a magnitude of order
$\eps^{\eta-2}$, with $\eta \in [0,2]$. The most interesting case is
$\eta=0$, which corresponds to the strongest type of perturbation. The
latter case allows to relate the asymptotic formula given in the
preceding section to the one that we propose below for a particular
form of the potential $q_1$.

When $q_0$ is negative, the above system models waves propagating in a
medium perturbed by a small inclusion of diameter $\eps $ with a
refractive index of order $\eps^{\eta-2}$. We refer to
\cite{HV-preprint} and \cite{HPV-preprint} for the case of
high-frequency waves in dimension two perturbed by small inclusions
with index of order one. The case $q_0$ and $q_1$ constant with $q_0$
negative and $\eta=2$ has been treated in \cite{AK-04} with Dirichlet
conditions instead of Neumann conditions at the domain's boundary.
When $q_0$ is positive, (\ref{eq:v}) models e.g. diffusive light
propagating in a medium with background absorption $q_0$ and zones of
different absorption coefficients in a small volume. The case $\eta=2$
has been investigated in dimension three in \cite{SmallAbso-02} for a
constant background $q_0$ and a constant perturbation $q_1$.

We denote by $V$ the solution of the unperturbed equation
\begin{equation} 
\label{eq:V}
 \left\{     
\begin{array}{l}
 \DST-\Delta V+q_0 V=0, \quad  \bx \in \Omega,\\[2mm]
\DST  \frac{\partial V}{\partial \bn } =g \quad \textrm{on } \partial \Omega.
\end{array} \right.
\end{equation}
When $q_0 \equiv 0$, we assume the normalizing and compatibility
conditions:
\begin{equation} \label{eq:norma} \int_{\partial \Omega} V d\sigma=0 \qquad \mbox{ and } \qquad  \int_{\partial \Omega} g d\sigma=0,\end{equation}
where $\sigma$ denotes the surface measure on $\partial \Omega$.
According to (\ref{eq:v}), this also implies:
\begin{equation} \label{eq:meanzero}
\int_{\Omega}q_1\Big(\frac{\bx-\bx_0}{\eps} \Big)\, v^\eps(\bx) d\bx=0,\qquad \textrm{when } q_0=0.
\end{equation}
In order to obtain the existence and uniqueness of a variational
solution to
$(\ref{eq:V})$, we make the following classical assumption: \\
\\
\noindent \textbf{(H-1)}\,\, Let $u\in H^1(\Omega)$. Then
$$
\int_\Omega \nabla u \cdot \nabla v \, d\bx +\int_\Omega q_0 \,u \, v d\bx=0, \qquad \mbox{ for all } v\in H^1(\Omega),
$$
implies that $u=0$.

Under \textbf{(H-1)}, an application of lemma \ref{lemappend} of the
appendix yields a unique weak solution $V \in H^1(\Omega)$ to
$(\ref{eq:V})$. When $q_0:=0$, the same holds thanks to conditions
(\ref{eq:norma}). Since we need high-order Taylor expansions of $V$ in
the sequel, we make the additional assumption that the restriction of
$q_0$ to a neighborhood $\bx_0+\eps B'$ of the set $\bx_0+\eps B$,
with $B \subset \subset B'$, belongs to $\calC^\infty(\bx_0+\eps B')$.
Using standard elliptic regularity \cite{gt1} and (\ref{hypB}), we
obtain that $V \in \calC^\infty(\bx_0+\eps B')$.  When first order
expansions are considered, then a $L^\infty(\Omega)$ regularity for $V$
is sufficient.  Existence and uniqueness for \eqref{eq:v} uniformly in
$\eps$ for $\eps$ small enough will be given in the sequel. When $\eta
\in ]0,2]$, no additional condition is required on $q_1$. When
$\eta=0$, we add the following assumption:

\noindent \textbf{(H-2)}\,\,
$-1$ is not an eigenvalue of the bounded operator $T$ defined as:
$$
T: L^2(B) \to L^2(B), \quad \varphi \to T\varphi(\by)=\int_{B}
q_1(\bx) \varphi(\bx) \Gamma(\bx-\by) d\bx.
$$
Here, $\Gamma$ is the fundamental solution of the Laplacian given
in (\ref{eq:G}). $\textbf{(H-2)}$ is verified for instance when
$q_1>0$ \textit{a.e.} in $B$ or when the following Rollnick type
\cite{RS-80-4} norm of $q_1$ is less than one,
$$
\int_{B}\int_{B} \Big(\sqrt{|q_1(\bx)|}\sqrt{|q_1(\by)|} |\Gamma(\bx,\by)| \Big)^p d\bx d\by <1,
$$
for some $p \geq 1$, or when $q_1$ is a Bohm-like potential of the
form
$$
q_1(\bx)=\frac{\Delta \sqrt{1+D_1(\bx)}}{\sqrt{1+D_1(\bx)}}, 
$$
for some $\calC^2(\Rm^d)$ function $D_1$ with support in $B$ such
that $1+D_1>0$ in $\Rm^d$.

The case $d=2$ and $\eta=0$ is particular in the sense that
$$
\frac{1}{\eps^2}\int_{\Omega}q_1\Big(\frac{\bx-\bx_0}{\eps} \Big)\, d\bx=\int_B q_1(\bx) d\bx=\calO(1),
$$
so that we cannot expect the perturbation caused by the inclusion
to be small in the general case. We thus need to add an additional
hypothesis to be able to treat $q_1$ as a perturbation. It is the case
under the following symmetry assumption:
\\
\noindent \textbf{(H-3)} When $d=2$ and $\eta=0$, 
we assume that the solution $v^\eps$ to ($\ref{eq:v}$) verifies that
$$
\int_{\Omega}q_1\Big(\frac{\bx-\bx_0}{\eps} \Big)\, v^\eps(\bx)
d\bx=0.$$
\\

Note that $\textbf{(H-3)}$ is verified when e.g. $q_0 \equiv 0$ thanks
to (\ref{eq:norma}). We introduce the Green function $N(\bx,\by) \in
\calD'(\Omega \times \Omega)$ of
\eqref{eq:V}, which for each fixed $\by$ in
$\Omega$, solves:
\begin{equation}
\label{eq:Nq}
 \left\{ 
\begin{array}{l}
 \DST-\Delta_\bx N(\bx,\by)+q_0(\bx) N(\bx,\by)= \delta(\bx-\by), \quad  \bx \in \Omega,\\[2mm]
\DST  \frac{\partial N(\bx,\by)}{\partial \bn_\bx } =0 \quad \textrm{on } \partial \Omega.
\end{array} \right. 
\end{equation}
When $q_0 \equiv 0$, $N$ has to be defined as in (\ref{eq:N}). $N$ is
symmetric in its arguments. Hypothesis \textbf{(H-1)} is verified e.g.
when $q_0 \geq 0$, $\Omega$ \textit{a.e.} (with the normalizing
condition when $q_0\equiv 0$), when $q_0$ is constant and not an
eigenvalue of the Laplacian equipped with homogeneous Neumann
conditions, or when the following Rollnick-type norm of $q_0$ is less
than one,
$$
\int_{\Omega}\int_{\Omega} \Big(\sqrt{|q_0(\bx)|}\sqrt{|q_0(\by)|} |N(\bx,\by)| \Big)^p d\bx d\by <1,
$$
for some $p\geq1$.  We have the following proposition,
which allows us to decompose $N$ as the sum of the whole space
Green function $\Gamma$ and a regular function:
\begin{proposition} \label{prop:decompN}
  We have $N(\bx,\by):=\Gamma(\bx-\by)+R(\bx,\by)$, where
  $R(\cdot,\by) \in H^1(\Omega)\cap W^{2,p}(\Omega')$ with $p <
  \frac{d}{d-2}$ when $3\leq d\leq 5$ and $p <\infty$ when $d=2$ for
  any $\Omega' \subset \subset \Omega$ uniformly in $\by\in \Omega'$.
  When $q_0\equiv 0$, then $R$ belongs to $\calC^\infty(\Omega \times
  \Omega)$. Moreover, $N$ admits the following asymptotic expansion
  for $\bx \in B$, $\by$ \textit{a.e.}  in $\partial \Omega$:
\begin{equation} \label{decompNhelm}
\nabla _\bx N(\bx_0+\eps \bx,\by)=\sum_{|i|=1}^{d} \frac{\eps^{|i|}}{i!}  \nabla \bx^i \partial^i_\bx N(\bx_0,\by)+\calO(\eps^{d+1}),
\end{equation} 
where $\calO(\eps^{d+1})$ denotes a term bounded in $L^2(\partial
\Omega)$ by $C\eps^{d+1}$, uniformly in $\bx$.
\end{proposition}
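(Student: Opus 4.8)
The plan is to mirror the structure of Proposition \ref{prop:decompNdiff}, adjusting for the fact that now the background operator is $-\Delta + q_0$ rather than $\nabla\cdot D_0\nabla$. First I would define $R(\bx,\by) := N(\bx,\by) - \Gamma(\bx-\by)$ and observe that, by subtracting the equation satisfied by $\Gamma$ (namely $-\Delta_\bx\Gamma(\bx-\by)=\delta(\bx-\by)$ in $\Rm^d$) from \eqref{eq:Nq}, the remainder $R(\cdot,\by)$ solves a boundary value problem with \emph{no distributional source}: $-\Delta_\bx R(\bx,\by) + q_0(\bx)R(\bx,\by) = -q_0(\bx)\Gamma(\bx-\by)$ in $\Omega$, with the inhomogeneous Neumann condition $\partial R/\partial\bn_\bx = -\partial\Gamma(\bx-\by)/\partial\bn_\bx$ on $\partial\Omega$ (and the normalization when $q_0\equiv 0$). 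The right-hand side $q_0\Gamma(\cdot-\by)$ lies in $L^p_{\mathrm{loc}}$ for $p<\frac{d}{d-2}$ (and any $p<\infty$ when $d=2$), and the Neumann datum is smooth since $\bx_0+\eps B$ is at positive distance $d_0$ from $\partial\Omega$. Invoking the appendix's well-posedness lemma (lemma \ref{lemappend}) under \textbf{(H-1)}, I obtain $R(\cdot,\by)\in H^1(\Omega)$; the condition $d\le 5$ enters precisely here, to keep $\Gamma(\cdot-\by)\in L^2_{\mathrm{loc}}$ so that the source is in $H^{-1}$ and Lax--Milgram/Fredholm applies. Interior elliptic regularity (see \cite{gt1}) then upgrades this to $R(\cdot,\by)\in W^{2,p}(\Omega')$ for $\Omega'\subset\subset\Omega$, with bounds uniform in $\by\in\Omega'$ because both the source and the Neumann datum depend smoothly on $\by$ away from the diagonal. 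When $q_0\equiv 0$ the source term vanishes entirely, $R(\cdot,\by)$ is harmonic in $\Omega$ with smooth Neumann data, and differentiating in $\by$ shows $R\in\calC^\infty(\Omega\times\Omega)$, exactly as for $R_2$ in Proposition \ref{prop:decompNdiff}.

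Next, to prove the expansion \eqref{decompNhelm}, I would establish that the trace $\left.\partial_\by^\beta N(\bz,\by)\right|_{\partial\Omega}$ is well-defined in $L^2(\partial\Omega)$, uniformly for $\by$ in a compact subset $\Omega'$ of $\Omega$ containing $\bx_0+\eps B$ (the point $\bz=\bx_0+\eps\bx$ ranges over $\bx_0+\eps B\subset\Omega'$). This follows from the decomposition $N=\Gamma+R$: the piece $\Gamma(\bz-\by)$ is $\calC^\infty$ since $\bz\in\Omega'$ and $\by\in\partial\Omega$ are separated by $d_0>0$, while $R(\cdot,\by)$ has the interior $W^{2,p}$ regularity just established, with $\by$-derivatives controlled uniformly on $\Omega'$. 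Now, since $\bz\mapsto N(\bz,\by)$ is a smooth function of $\bz$ in a neighborhood of $\bx_0$ (of size $\sim d_0$, hence independent of $\eps$) with all derivatives bounded uniformly in $\by\in\partial\Omega$, Taylor's theorem with integral remainder gives, for $\bx\in B$,
\[
\nabla_\bx N(\bx_0+\eps\bx,\by) = \sum_{|i|=1}^{d}\frac{\eps^{|i|}}{i!}\,\nabla\bx^i\,\partial^i_\bx N(\bx_0,\by) + \calO(\eps^{d+1}),
\]
where the remainder is bounded by $C\eps^{d+1}$ in $L^2(\partial\Omega)$ uniformly in $\bx\in B$, the constant $C$ being controlled by the $\calC^{d+1}$-norm of $N(\cdot,\by)$ on the ball of radius $2\,\mathrm{diam}(B)\cdot\eps\le d_0/2$ about $\bx_0$, which is finite uniformly in $\by$.

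The main obstacle is the regularity claim for $R$ on Lipschitz domains: one needs $R(\cdot,\by)\in H^1(\Omega)$ globally (which forces the constraint $d\le 5$ through the integrability of $\Gamma(\cdot-\by)$ near its singularity, since $\Gamma(\cdot-\by)\in L^2_{\mathrm{loc}}$ fails for $d\ge 6$), and then \emph{interior} $W^{2,p}$ regularity, which is unaffected by the boundary's Lipschitz character and follows from standard local estimates once the source $q_0\Gamma(\cdot-\by)$ is seen to be in $L^p_{\mathrm{loc}}$. The one subtlety is making the $\by$-dependence uniform: I would fix $\Omega''$ with $\Omega'\subset\subset\Omega''\subset\subset\Omega$ and note that for $\by\in\Omega'$ the source $q_0\Gamma(\cdot-\by)$ and the Neumann datum $\partial\Gamma(\cdot-\by)/\partial\bn_\bx$ are bounded, together with their $\by$-derivatives, uniformly in $L^p(\Omega)$ and $L^2(\partial\Omega)$ respectively; the well-posedness constant from \textbf{(H-1)} is $\by$-independent, so the resulting bound on $\partial_\by^\beta R(\cdot,\by)$ in $W^{2,p}(\Omega'')$ is uniform, and the compact embedding into $\calC(\overline{\Omega'})$-type trace bounds then give the uniform $L^2(\partial\Omega)$ control needed for the Taylor remainder. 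Everything else is a direct transcription of the argument for Proposition \ref{prop:decompNdiff}, with the Green's-theorem identity there replaced by the simpler observation that away from the diagonal $N$ is already smooth.
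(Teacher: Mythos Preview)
Your argument for the first part (existence of $R(\cdot,\by)\in H^1(\Omega)$ and interior $W^{2,p}$ regularity for $\by\in\Omega'$) follows the paper's line, though your justification of the constraint $d\le 5$ is not quite right: $\Gamma\in L^2_{\mathrm{loc}}$ actually fails already for $d\ge 4$, not $d\ge 6$. The sharp reason is the one the paper gives: one needs $\Gamma\in L^{q'}_{\mathrm{loc}}$ with $q'=\tfrac{2d}{d+2}$ (the dual Sobolev exponent), and $\tfrac{2d}{d+2}<\tfrac{d}{d-2}$ holds precisely for $d<6$.

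The real gap is in the expansion \eqref{decompNhelm}. There you need $\partial_\bz^\alpha N(\bz,\by)$, for $\bz$ near $\bx_0$ and $|\alpha|\le d+1$, to lie in $L^2(\partial\Omega)$ as a function of $\by$ on the boundary, uniformly in $\bz$. Your $W^{2,p}$ estimate was established for $\by\in\Omega'\subset\subset\Omega$, not for $\by\in\partial\Omega$; the equation for $R(\cdot,\by)$ degenerates as $\by\to\partial\Omega$ because the Neumann datum $-\partial_{\bn_\bx}\Gamma(\bx-\by)$ becomes singular there, so you cannot simply invoke ``the interior $W^{2,p}$ regularity just established''. Moreover, differentiating the source $q_0\Gamma(\cdot-\by)$ in $\by$ produces functions $\sim|\bx-\by|^{2-d-|\beta|}$ that are no longer in $L^p(\Omega)$ for the required $p$ once $|\beta|\ge 1$, so your uniform-$\by$-derivative argument breaks down. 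The paper handles this by splitting $R=R_1+R_2$: $R_1$ absorbs the volume source with \emph{homogeneous} Neumann data, so for $\by\in\partial\Omega$ the source $q_0\Gamma(\cdot-\by)$ is smooth near $\bx_0$ and interior regularity gives $R_1(\cdot,\by)\in\calC^\infty$ near $\bx_0$ uniformly; $R_2$ absorbs the singular Neumann datum, and the paper represents it via Green's theorem as a boundary integral against $N(\bx,\bz)$, passes to the limit $\by\to\partial\Omega$ using the principal-value double layer potential, and then uses that $R_2(\bz,\by)$ satisfies the homogeneous equation in $\bz$ to gain $\calC^\infty$-in-$\bz$ regularity with values in $L^2(\partial\Omega)$. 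Your claim that this layer-potential step can be ``replaced by the simpler observation that away from the diagonal $N$ is already smooth'' is exactly what needs proof here, and it is not automatic for $\by$ on a merely Lipschitz boundary.
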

\begin{proof}
  We consider only the case $q_0 \neq 0$ since the case $q_0\equiv 0$
  follows from proposition \ref{prop:decompNdiff}. Plugging
  $N(\bx,\by):=\Gamma(\bx-\by)+R(\bx,\by)$ into (\ref{eq:Nq}) leads
  for any $\by$ fixed in $\Omega$ to the equation:
\begin{equation}
\label{eq:R1}
 \left\{  
\begin{array}{l}
 \DST-\Delta_\bx R(\bx,\by)+q_0(\bx)R(\bx,\by)=-q_0(\bx)\Gamma(\bx-\by), \quad  \bx \in \Omega,\\[2mm]
\DST  \frac{\partial R(\bx,\by)}{\partial \bn_\bx } = -\frac{\partial \Gamma(\bx-\by)}{\partial \bn_\bx },\quad \textrm{on } \partial \Omega.
\end{array} \right. 
\end{equation} 
Pick an $\by \in \Omega' \subset \subset \Omega$ and for any $v \in
H^1(\Omega)$, consider the linear form:
$$l(v):=- \int_\Omega q_0(\bx)\Gamma (\bx-\by)v(\bx)
d\bx-\int_{\partial \Omega}\frac{\partial \Gamma(\bx-\by)}{\partial
  \bn_\bx } v(\bx) d\sigma(\bx). 
$$
Then $l$ is continuous in $H^1(\Omega)$. Indeed, on the one hand,
$\Gamma(\bx-\by)$ is uniformly bounded for $(\bx,\by) \in \partial
\Omega \times \Omega'$ which allows us to treat the second integral.
On the other hand, $ \Gamma \in L^{p}_\textrm{loc}(\Rm^d)$ with $p <
\frac{d}{d-2}$ when $d\geq 3$ and $p <\infty$ when $d=2$ so that the
Sobolev embedding $H^1(\Omega) \hookrightarrow L^q(\Omega)$, for $q
\leq \frac{2d}{d-2}$ when $d\geq 3$ and $q<\infty$ when $d=2$ implies
$$
|l(v)| \leq C (\|\Gamma \|_{L^{q'}(B_R)}+1) \|v\|_{H^1(\Omega)},
$$
for $q'\geq \frac{2d}{d+2}$ when $d\geq 3$ and $q'>1$ when $d=2$,
where $B_R$ is a ball of radius $R$ large enough. Since
$\frac{d}{d-2}>\frac{2d}{d+2}$ for $d<6$, we get the desired result.
Note that for $d\geq7$, the above linear form is not continuous as we may
construct functions $v\in H^1(\Omega)$ of the form $|\bx|^{-\alpha}$
such that $\Gamma(\bx)v(\bx)$ is not integrable in the vicinity of
$0$.  Lemma \ref{lemappend} then yields a unique $R(\cdot,\by) \in
H^1(\Omega)$ uniformly bounded in $\by$ when $\by \in \Omega'$ by
choosing $a_0(u,v)=\int_{\Omega}(\nabla u\cdot\nabla v + uv) d\bx$ and
$a_1(u,v)=\int_{\Omega} (q_0(\bx)-1)uv d\bx$.  Standard elliptic
regularity \cite{gt1} gives, for $1<p<\frac{d}{d-2}$ when $d\geq 3$ and
$p <\infty$ when $d=2$, that:
\begin{eqnarray*}
\| R(\cdot,\by)\|_{W^{2,p}(\Omega')} &\leq& C \left( \| R(\cdot,\by)\|_{H^1(\Omega)} +  \| \Gamma \|_{L^p(B_R)}\right),
\end{eqnarray*}
so that $R(\cdot,\by) \in W^{2,p}(\Omega')$ uniformly in $\by \in \Omega'$.

To prove (\ref{decompNhelm}), we decompose $R$ as
$R(\bx,\by):=R_1(\bx,\by)+R_2(\bx,\by)$ with
\begin{equation}
\label{eq:RR1}
 \left\{  
\begin{array}{l}
 \DST-\Delta_\bx R_1(\bx,\by)+q_0(\bx)R_1(\bx,\by)=-q_0(\bx)\Gamma(\bx-\by), \quad  \bx \in \Omega,\\[2mm]
\DST  \frac{\partial R_1(\bx,\by)}{\partial \bn_\bx } =0,\quad \textrm{on } \partial \Omega,
\end{array} \right. 
\end{equation} 
\begin{equation}
\label{eq:RR2}
 \left\{  
\begin{array}{l}
 \DST-\Delta_\bx R_2(\bx,\by)+q_0(\bx)R_2(\bx,\by)=0, \quad  \bx \in \Omega,\\[2mm]
\DST  \frac{\partial R_2(\bx,\by)}{\partial \bn_\bx } =-\frac{\partial \Gamma(\bx-\by)}{\partial \bn_\bx },\quad \textrm{on } \partial \Omega.
\end{array} \right. 
\end{equation} 
Consider first (\ref{eq:RR1}) for $\by \in \partial \Omega$. According
to lemma \ref{lemappend}, $R_1(\cdot,\by)$ belongs to $H^1(\Omega)$
and is uniformly bounded with respect to $\by$. Let $B'$ be a
neighborhood of $B$ such that $B \subset \subset B'$. Since
$\Gamma(\cdot-\by) \in \calC^\infty(\bx_0+\eps \overline{B'})$
uniformly in $\by \in \partial \Omega$, and $q_0 \in
\calC^\infty(\bx_0+\eps B')$, we obtain from elliptic regularity that
$R_1(\cdot,\by) \in \calC^\infty(\bx_0+\eps \overline{B})$ uniformly
in $\by \in \partial \Omega$.  Now, $R_2$ is treated almost exactly as
the term $R_2$ in proposition \ref{prop:decompNdiff}, so we highlight
the differences. According to the previous results on $R$, the trace
$\left.  N(\bx,\bz) \right|_{\partial \Omega}$ exists in $L^2(\partial
\Omega)$ uniformly for $\bz \in \Omega' \subset \subset \Omega$.  Thus
we have the following integral equation:
$$
R_2(\bz,\by)=- \int_{\partial \Omega}\frac{\partial  
  \Gamma(\bx-\by)}{\partial \bn_\bx } N(\bx,\bz)d\sigma(\bx), 
  \qquad (\bz,\by) \in \Omega' \times \Omega.
$$
As $\by$ goes to $\partial \Omega$, the integral converges to
$$
- \textrm{p.v} \int_{\partial \Omega} \frac{\partial
  \Gamma(\bx-\by)}{\partial \bn_\bx }N(\bx,\bz) d\sigma(\bx)
+\frac{1}{2} N(\by,\bz), $$
where p.v. stands for the Cauchy principal
value and the above quantity makes sense in $L^2(\partial \Omega)$
uniformly in $\bz \in \Omega'$ so that $R_2(\bz,\cdot) \in
L^2(\partial \Omega)$ for all $\bz \in \Omega'$. Moreover, we verify
that $R_2(\bz,\by)$ satisfies in the distributional sense, for $\bz
\in \Omega'$, $\by \in \partial \Omega$,
$$-\Delta_\bz R_2(\bz,\by)+q_0(\bz)R_2(\bz,\by)=0,$$
so that we
conclude from elliptic regularity that $R_2(\cdot,\by) \in
\calC^\infty(\bx_0+\eps \overline{B})$ with values in $L^2(\partial
\Omega)$. Classical Taylor expansions then yield
$(\ref{decompNhelm})$.
\end{proof}
We come back to \eqref{eq:v} and state the following result.
\begin{proposition} \label{prop:asympV} 
  Assume that \textbf{(H-2)} is satisfied when $\eta=0$ and
  \textbf{(H-3)} is satisfied when $d=2$ and $\eta=0$. Then, under assumption
  \textbf{(H-1)}, there exists $\eps_0>0$, such that for all $0<\eps <
  \eps_0$, the system (\ref{eq:v}) admits a unique variational
  solution $v^\eps \in H^1(\Omega)$.  Moreover, the restriction of
  $v^\eps$ to the set $\bx_0+\eps B$ verifies the following
  decomposition
\begin{equation} \label{decompvtheo}
v^\eps(\bx_0+\eps \by)=V(\bx_0+\eps \by)+\eps^\eta \Psi^\eps(\by)+\eps^{d-2+\eta}\,r^\eps(\by)+\calO(\eps^{d+2}), \qquad \by \textit{ a.e.} \textrm{ in } B, 
\end{equation}
where $\Psi^\eps(\by):=\sum_{|j|=0}^{d+1}\frac{\eps^{|j|}}{j!}
\partial^j V(\bx_0) \phi^\eta_j(\by)$ and $\phi^\eta_j$ is the unique
solution in $H^1(B)$ to
\begin{equation} \label{eqphith2}
\phi^\eta_j+\eps^{\eta} T \phi^\eta_j = -T \bx^j, \qquad \by \in B,
\end{equation}
and $r^\eps$ the unique solution in $H^1(B)$, for $\by \in B$, to
$$
r^\eps(\by)+\eps^{\eta} T r^\eps(\by) = \int_{B} q_1\left(\bx \right) v^\eps(\bx_0+\eps \bx) \left(R(\bx_0+\eps \bx, \bx_0+\eps \by) -\delta_d^2 \,(2\pi)^{-1}\, \log \eps \right)d\bx.
$$
The operator $T$ is defined in \textbf{(H-2)} and the function $R$
in proposition \ref{prop:decompN} whereas $\delta_d^2$ is the
Kronecker symbol. The notation $\calO(\eps^{d+2})$ represents a term
bounded in $H^1(B)$ by $C\eps^{d+2}$.  The remainder $r^\eps$ is
bounded in $L^2(B)$ independently of $\eps$ when $d =3$, by $C
\eps^{-\alpha}$, for any $\alpha>0$ when $d=4$, by $C\eps^{-1}$ when
$d=5$, and by $ C |\log \eps|$ when $d=2$.  When $d=2$ and $\eta=0$,
then $r^\eps$ is of order $\calO(\eps)$ thanks to \textbf{(H-3)}. When
$q_0\equiv 0$, then $r^\eps$ is bounded in $H^1(B)$ independently of
$\eps$ for any $d$.
\end{proposition}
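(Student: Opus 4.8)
The plan is to reduce \eqref{eq:v} to a Lippmann--Schwinger integral equation posed on the fixed set $B$ and to solve it perturbatively in $\eps$. Since $-\Delta v^\eps+q_0v^\eps=-\eps^{\eta-2}q_1(\tfrac{\cdot-\bx_0}{\eps})v^\eps$ while $-\Delta V+q_0V=0$ with the same Neumann data, the difference $v^\eps-V$ has homogeneous Neumann trace, so with $N$ the Green function of \eqref{eq:Nq} (or of \eqref{eq:N} when $q_0\equiv0$, in which case \eqref{eq:meanzero} makes the representation admissible),
\[
v^\eps(\bx)=V(\bx)-\eps^{\eta-2}\int_\Omega N(\bx,\bz)\,q_1\Big(\tfrac{\bz-\bx_0}{\eps}\Big)v^\eps(\bz)\,d\bz .
\]
As $q_1(\tfrac{\cdot-\bx_0}{\eps})$ is supported in $\bx_0+\eps B$, the substitution $\bz=\bx_0+\eps\bx'$, $\bx=\bx_0+\eps\by$, together with the notation $w^\eps(\by):=v^\eps(\bx_0+\eps\by)$, gives for $\by\in B$
\[
w^\eps(\by)=V(\bx_0+\eps\by)-\eps^{d-2+\eta}\int_B N(\bx_0+\eps\by,\bx_0+\eps\bx')\,q_1(\bx')\,w^\eps(\bx')\,d\bx'.
\]
Inserting $N=\Gamma+R$ from proposition \ref{prop:decompN} and the scaling identity $\Gamma(\eps\bz)=\eps^{2-d}\Gamma(\bz)-\delta_d^2(2\pi)^{-1}\log\eps$, the $\Gamma$ contribution is $\eps^{\eta}$ times the operator $T$ of \textbf{(H-2)}, and the equation becomes $(I+\eps^{\eta}T)w^\eps=V(\bx_0+\eps\by)+\eps^{d-2+\eta}K^\eps w^\eps$, where $K^\eps$ is the integral operator on $B$ with kernel $-\big(R(\bx_0+\eps\by,\bx_0+\eps\bx')-\delta_d^2(2\pi)^{-1}\log\eps\big)q_1(\bx')$.

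Next, $I+\eps^{\eta}T$ is boundedly invertible on $L^2(B)$, uniformly for small $\eps$: when $\eta\in(0,2]$ because $\|\eps^{\eta}T\|\to0$, and when $\eta=0$ because $T$ is compact (a weakly singular integral operator on the bounded set $B$) and $-1$ is not an eigenvalue by \textbf{(H-2)}, so the Fredholm alternative applies. Since $T\varphi$ is the Newtonian potential of $q_1\varphi$, it belongs to $H^2_{\mathrm{loc}}$ for $\varphi\in L^2(B)$, whence $(I+\eps^{\eta}T)^{-1}$ also maps $H^1(B)$ into itself, uniformly. Writing $w^\eps=(I+\eps^{\eta}T)^{-1}V(\bx_0+\eps\by)+\eps^{d-2+\eta}(I+\eps^{\eta}T)^{-1}K^\eps w^\eps$ and using the kernel estimates of the last paragraph, the operator $\eps^{d-2+\eta}(I+\eps^{\eta}T)^{-1}K^\eps$ has $L^2(B)$-operator norm $<1$ for $\eps$ small (this is where \textbf{(H-3)} is needed when $d=2$, $\eta=0$; see below), so $w^\eps$ exists and is unique in $L^2(B)$, satisfies $\|w^\eps\|_{L^2(B)}\le C\|V(\bx_0+\eps\cdot)\|_{L^2(B)}\le C'$, and lies in $H^1(B)$ because $Tw^\eps$ and $K^\eps w^\eps$ are Newtonian-type potentials of $L^2$ densities, hence locally $H^2$. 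One then \emph{defines} $v^\eps$ on $\Omega$ by the Lippmann--Schwinger formula above; using \eqref{eq:Nq} (resp.\ \eqref{eq:N}) and \textbf{(H-1)}, one checks that $v^\eps\in H^1(\Omega)$ solves \eqref{eq:v}, that its rescaled restriction to $\bx_0+\eps B$ coincides with $w^\eps$, and that uniqueness of $v^\eps$ follows from that of $w^\eps$.

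To obtain \eqref{decompvtheo}, Taylor expand $V(\bx_0+\eps\by)=\sum_{|j|=0}^{d+1}\tfrac{\eps^{|j|}}{j!}\partial^jV(\bx_0)\,\by^j+\calO(\eps^{d+2})$ in $H^1(B)$ (legitimate since $V\in\calC^\infty(\bx_0+\eps B')$), and note that $(I+\eps^{\eta}T)^{-1}\by^j=\by^j+\eps^{\eta}\phi_j^{\eta}(\by)$ is merely a rewriting of \eqref{eqphith2}. Applying $(I+\eps^{\eta}T)^{-1}$ term by term and using its uniform $H^1(B)$-boundedness yields $(I+\eps^{\eta}T)^{-1}V(\bx_0+\eps\by)=V(\bx_0+\eps\by)+\eps^{\eta}\Psi^\eps(\by)+\calO(\eps^{d+2})$ with $\Psi^\eps$ as in the statement. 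The remaining contribution is $\eps^{d-2+\eta}r^\eps$ with $r^\eps:=(I+\eps^{\eta}T)^{-1}K^\eps w^\eps$, which by construction (after using the symmetry $R(\bx,\by)=R(\by,\bx)$ to interchange arguments) solves the integral equation for $r^\eps$ of the proposition. This establishes \eqref{decompvtheo} and shows $\phi_j^{\eta},r^\eps\in H^1(B)$.

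Finally, the size of $r^\eps$ --- the main obstacle --- is governed by $\|K^\eps w^\eps\|_{L^2(B)}$, since $(I+\eps^{\eta}T)^{-1}$ and $\|w^\eps\|_{L^2(B)}$ are uniformly bounded. By proposition \ref{prop:decompN}, $R(\cdot,\by)\in W^{2,p}(\Omega')$ with $p<\tfrac{d}{d-2}$ (and $R\in\calC^\infty$ when $q_0\equiv0$); since $R$ solves $-\Delta R+q_0R=-q_0\Gamma$, near the diagonal $R(\bx,\by)=\calO(|\bx-\by|^{4-d})$ for $d=5$, $\calO(\log|\bx-\by|)$ for $d=4$, and $R$ is bounded, in fact $\calC^1$, for $d\le3$. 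Cauchy--Schwarz in $\bx'$ and the rescaling $\bx_0+\eps\by\mapsto\bz$ give
\[
\|K^\eps w^\eps\|_{L^2(B)}^2\le C\,\|w^\eps\|_{L^2(B)}^2\,\eps^{-2d}\int_{\bx_0+\eps B}\int_{\bx_0+\eps B}\Big|R(\bz,\bw)-\delta_d^2(2\pi)^{-1}\log\eps\Big|^2\,d\bw\,d\bz,
\]
and the diagonal bounds make the right-hand side $\calO(1)$ for $d=3$, $\calO(|\log\eps|^2)$ for $d=2$ and $d=4$, and $\calO(\eps^{-2})$ for $d=5$, which, combined with the prefactor $\eps^{d-2+\eta}$, yields both the stated bounds on $r^\eps$ and the contraction used above. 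For $d=2$, $\eta=0$ the prefactor $\eps^{d-2+\eta}=1$ does not damp the $\log\eps$ term and the contraction fails; here \textbf{(H-3)}, i.e.\ $\int_B q_1 w^\eps\,d\bx'=0$, annihilates both the $\log\eps$ term (a constant multiple of $\int_B q_1 w^\eps\,d\bx'$) and the constant part $R(\bx_0,\bx_0)$ of the kernel, so that $K^\eps w^\eps=\int_B\big(R(\bx_0+\eps\by,\bx_0+\eps\bx')-R(\bx_0,\bx_0)\big)q_1 w^\eps\,d\bx'=\calO(\eps)\|w^\eps\|$ (using that $R$ is $\calC^1$ for $d=2$), which restores the contraction and gives $r^\eps=\calO(\eps)$. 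When $q_0\equiv0$, the kernel $R$ is $\calC^\infty$ and \eqref{eq:meanzero} plays the role of \textbf{(H-3)}, so $r^\eps$ is bounded in $H^1(B)$ for every $d$. The delicate ingredients are thus the uniform control of the rescaled corrector $R(\bx_0+\eps\cdot,\bx_0+\eps\cdot)$ in the borderline dimensions $d=4,5$ and the use of \textbf{(H-3)} (or \eqref{eq:meanzero}) to keep the Neumann series for $\big(I+\eps^{\eta}T-\eps^{d-2+\eta}K^\eps\big)^{-1}$ convergent when $d=2$, $\eta=0$.
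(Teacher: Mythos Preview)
Your proof follows the paper's essentially verbatim: reduce to the Lippmann--Schwinger equation on $B$, split the kernel via $N=\Gamma+R$, invert $I+\eps^\eta T$ by compactness and \textbf{(H-2)}, treat the $R$-contribution as a lower-order perturbation (contraction for you, spectral perturbation of a Fredholm operator in the paper), and Taylor expand $V$ to extract $\Psi^\eps$ and $r^\eps$. The one place to tighten is your pointwise diagonal asymptotics $R(\bx,\by)=\calO(|\bx-\by|^{4-d})$, $\calO(\log|\bx-\by|)$ for $d=5,4$ (and the $\calC^1$ claim for $d=3$, which proposition~\ref{prop:decompN} only gives as $\calC^0$): the paper does not prove these pointwise bounds and instead controls $\|K^\eps\|_{\calL(L^2(B))}$ via the Sobolev embedding $W^{2,p}\hookrightarrow L^q$ ($q<\infty$ for $d=4$, $q=5$ for $d=5$) combined with a rescaling of the $L^2(B\times B)$ norm of the kernel.
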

We then have the following theorem:
\begin{theorem} \label{th:asympV}
  Under the hypotheses of proposition \ref{prop:asympV}, the solution
  to $v^\eps$ to (\ref{eq:v}) satisfies the following asymptotic
  expansion, almost everywhere on $\partial \Omega$:
\begin{eqnarray*}
\left. v^\eps(\by) \right|_{\partial \Omega}&=&\left. V(\by)\right|_{\partial \Omega}-\sum_{|j|=0}^{d+1} \sum_{|i|=0}^{d+1} \frac{\eps^{d-2+\eta+|i|+|j|}}{i!j!}  \left(Q_{ij}+\eps^\eta Q^\eta_{ij} \right) \partial^j V (\bx_0) \left. \partial^i N(\bx_0,\by)\right|_{\partial \Omega}\\
&&+\eps^{2(d-2+\eta)} f^\eps(\by)+\calO(\eps^{2d}),
\end{eqnarray*}
where $\calO(\eps^{2d})$ is a term bounded in $L^2(\partial \Omega)$
by $C\eps^{2d}$ and for $(i,j)\in\NN^d \times \NN^d$,
\begin{displaymath}
  \begin{array}{rcl}
\DST Q_{ij}&=&\DST\int_B q_1(\bx)\bx^j \bx^i d\bx, \qquad Q^\eta_{ij}\,\,=\,\,\int_B q_1(\bx) \phi_j^\eta(\bx) \bx^i d\bx, \\[3mm]
\DST  f^\eps(\by)&=&\DST \int_B q_1(\bx) r^\eps (\bx) N(\bx_0+\eps \bx,
  \by) d\bx.
  \end{array}
\end{displaymath}
The remainder $\| f^\eps \|_{L^2(\partial \Omega)}$ is of order:
$\calO(|\log \eps|)$
when $d=2$; $\calO(1)$ when $d=3$; $\calO(\eps^{-\alpha})$ for
any $\alpha>0$ when $d=4$; and $\calO(\eps^{-1})$ when $d=5$.
\end{theorem}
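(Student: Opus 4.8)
The plan is to read off the boundary expansion from the Green-function representation of $v^\eps-V$, into which I would insert the two expansions already established: the interior decomposition \eqref{decompvtheo} of $v^\eps$ from Proposition \ref{prop:asympV} and the Taylor expansion of $N$ from Proposition \ref{prop:decompN}. First I would subtract \eqref{eq:V} from \eqref{eq:v}: the difference $w^\eps:=v^\eps-V$ solves $-\Delta w^\eps+q_0w^\eps=-\eps^{\eta-2}q_1\big((\bx-\bx_0)/\eps\big)v^\eps$ in $\Omega$ with vanishing Neumann trace. Pairing this equation with $N(\cdot,\by)$ through Green's formula — first for $\by\in\Omega$, then restricting to $\partial\Omega$, which is legitimate since the relevant traces lie in $L^2(\partial\Omega)$ by Proposition \ref{prop:decompN}, and since the compatibility conditions \eqref{eq:norma}--\eqref{eq:meanzero} handle the case $q_0\equiv0$ — and rescaling $\bx\mapsto\bx_0+\eps\bx$ while using $\mathrm{supp}\,q_1\subset B$, yields, for a.e.\ $\by\in\partial\Omega$,
\begin{equation*}
v^\eps(\by)-V(\by)=-\,\eps^{d-2+\eta}\int_B N(\bx_0+\eps\bx,\by)\,q_1(\bx)\,v^\eps(\bx_0+\eps\bx)\,d\bx .
\end{equation*}

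Next I would expand each factor in the integrand. Since $\mathrm{dist}(\partial\Omega,\bx_0+\eps B)>d_0$ by \eqref{hypB}, the argument of Proposition \ref{prop:decompN} applied to $N$ itself (its proof shows $N(\bx_0+\eps\cdot,\by)\in\calC^\infty(B)$ with values in $L^2(\partial\Omega)$, uniformly in $\by$) gives $N(\bx_0+\eps\bx,\by)=\sum_{|i|=0}^{d+1}\frac{\eps^{|i|}}{i!}\bx^i\,\partial^i_\bx N(\bx_0,\by)+\calO(\eps^{d+2})$ in $L^2(\partial\Omega)$, uniformly for $\bx\in B$. For the second factor, \eqref{decompvtheo} together with the Taylor expansion of $V$ about $\bx_0$ — valid since $V\in\calC^\infty(\bx_0+\eps B')$ by the smoothness of $q_0$ near the inclusion — gives
\begin{equation*}
v^\eps(\bx_0+\eps\bx)=\sum_{|j|=0}^{d+1}\frac{\eps^{|j|}}{j!}\,\partial^j V(\bx_0)\big(\bx^j+\eps^\eta\phi^\eta_j(\bx)\big)+\eps^{d-2+\eta}r^\eps(\bx)+\calO(\eps^{d+2})
\end{equation*}
in $H^1(B)$, hence in $L^2(B)$.

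I would then substitute both expansions into the integral and collect by powers of $\eps$. Pairing the polynomial $\bx^j$ against $\bx^i$ produces $\int_B q_1\bx^i\bx^j\,d\bx=Q_{ij}$, while pairing $\eps^\eta\phi^\eta_j$ against $\bx^i$ produces $\eps^\eta\int_B q_1\phi^\eta_j\bx^i\,d\bx=\eps^\eta Q^\eta_{ij}$; weighted by the coefficients $\partial^i N(\bx_0,\by)$, $\partial^j V(\bx_0)$ and by the prefactor $-\eps^{d-2+\eta}$ these assemble into exactly the displayed double sum $-\sum_{|i|,|j|\le d+1}\frac{\eps^{d-2+\eta+|i|+|j|}}{i!j!}\big(Q_{ij}+\eps^\eta Q^\eta_{ij}\big)\partial^j V(\bx_0)\,\partial^i N(\bx_0,\by)$. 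The contribution of $\eps^{d-2+\eta}r^\eps$ is kept in closed form, i.e.\ without Taylor-expanding $N$ against it, and equals $\eps^{2(d-2+\eta)}\int_B q_1(\bx)r^\eps(\bx)N(\bx_0+\eps\bx,\by)\,d\bx=\eps^{2(d-2+\eta)}f^\eps(\by)$. Every remaining term is the product of one $\calO(\eps^{d+2})$ remainder (from the $N$- or the $v^\eps$-expansion, or a neglected tail of the $i$- or $j$-sum) with bounded quantities, multiplied by $\eps^{d-2+\eta}$; since $\eta\ge0$, each such term is $\calO(\eps^{2d})$ in $L^2(\partial\Omega)$, which gives the stated remainder (one also checks that the product of $\eps^{d-2+\eta}r^\eps$ with the $\calO(\eps^{d+2})$ tail of $N$ is $\calO(\eps^{2d})$ in all relevant dimensions).

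Finally, to get the order of $f^\eps$: because the inclusion stays at distance $\ge d_0$ from $\partial\Omega$, the splitting $N=\Gamma+R$ of Proposition \ref{prop:decompN} makes $N(\bx_0+\eps\bx,\by)$ bounded uniformly for $\bx\in B$ and $\by\in\partial\Omega$, so $\|f^\eps\|_{L^2(\partial\Omega)}\le C\|q_1\|_{L^\infty}\|r^\eps\|_{L^2(B)}$, and the claimed orders — $|\log\eps|$ when $d=2$, $\calO(1)$ when $d=3$, $\eps^{-\alpha}$ when $d=4$, $\eps^{-1}$ when $d=5$ — are precisely the bounds on $\|r^\eps\|_{L^2(B)}$ recorded in Proposition \ref{prop:asympV}. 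The main obstacle is exactly this control of $r^\eps$, hence of $f^\eps$: its possible growth as $\eps\to0$ in dimensions $d=4,5$ is why the term $\eps^{2(d-2+\eta)}f^\eps$ must be displayed explicitly rather than absorbed into $\calO(\eps^{2d})$, and why one must verify, carefully and uniformly in $\eta\in[0,2]$, that nothing of comparable size is discarded when the expansions \eqref{decompvtheo} and \eqref{decompNhelm} are truncated at orders $d+1$ and $d+2$; this is also the reason the theorem is restricted to $d\le5$.
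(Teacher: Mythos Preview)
Your proposal is correct and follows essentially the same route as the paper's own proof: derive the integral representation $v^\eps(\by)=V(\by)-\eps^{d-2+\eta}\int_B q_1(\bx)\,v^\eps(\bx_0+\eps\bx)\,N(\bx_0+\eps\bx,\by)\,d\bx$, take its trace on $\partial\Omega$, insert the decomposition \eqref{decompvtheo} of $v^\eps$ together with the Taylor expansions of $V$ and of $N(\bx_0+\eps\cdot,\by)$, and read off the tensors $Q_{ij}$, $Q^\eta_{ij}$ and the remainder $f^\eps$. The paper's proof is in fact a three-line sketch of exactly these steps; you have supplied the details (in particular the justification of the Taylor expansion of $N$ itself rather than $\nabla N$, the handling of the $q_0\equiv0$ compatibility conditions, and the estimate of $f^\eps$ via the bounds on $r^\eps$) with appropriate care.
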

The proofs of the proposition and the theorem are given in section
\ref{proofprop}. When $\eta>0$, $\phi_j^\eta$ still depends on $\eps$.
We may then expand the operator $(\calI+\eps^\eta T)^{-1}$ in terms of
Neumann series up to the right order. We include the term $f^\eps$ in
the formula because we need its explicit expression below to make the
link between the asymptotic expansion for the diffusion equation and
that for the Helmholtz equation.

In the particular case where $q_0$ constant and positive, $\eta=2$,
$q_1$ is constant, and the inclusion is centered at $\bx_0$ so
that
\begin{math}
\int_B \bx d\bx=0,
\end{math}
we find for $d=3$ that
\begin{eqnarray*}
v^\eps(\by)&=&V(\by)- \eps^3\,q_1\, \left( \int_B \left( 1+\eps^2 \,\phi^2_0\right) d\bx \right)V(\bx_0)N(\bx_0,\by)\\
&&-\,q_1\sum_{|j|=0}^2 \sum_{|i|+|j|=2} \frac{\eps^{5}}{i!j!}\left(\int_B \bx^i \bx^j d\bx \right)\partial^i N(\bx_0,\by) \partial^j V (\bx_0)+\calO(\eps^6).
\end{eqnarray*}
According to (\ref{eqphith2}), $\phi^2_0$ verifies
$\phi^2_0=-T1+\calO(\eps^2)$ so that we recover the asymptotic
expansion given in \cite{SmallAbso-02}.

The tensor $Q$ is clearly symmetric. When $q_1$ is constant and not
identically zero, there is always a contribution of order
$\eps^{d-2+\eta}$ in the expansion, while for spatially varying $q_1$,
the first order contribution can vanish for instance by choosing $q_1$
such that $\int_B q_1 d\bx=0$.

\subsection{Relation between the diffusion and Helmholtz equations} 
\label{sec:rel}
We now compare the asymptotic expansions for the solution $u^\eps$ to
the diffusion equation (\ref{eq:diff}) given in theorem \ref{th:asymp}
and for the solution $v^\eps$ to the Helmholtz equation (\ref{eq:v})
given in theorem \ref{th:asympV}. It is well-known that a solution to
the diffusion equation
$$
\nabla \cdot D \nabla u=0,
$$
with $D \in \calC^2(\Rm^d)$ for instance and strictly positive,
also satisfies a Helmholtz or Schr\"odinger equation of the form
\begin{displaymath}
  \Delta \big(\sqrt{D} u \big)+ \Big(\dfrac{\Delta \sqrt{D}}{\sqrt D}
   \Big)\big( \sqrt D u\big)=0.
\end{displaymath}
Our purpose here is to verify that the polarization tensors obtained
in the diffusion and Helmholtz frameworks are indeed the same for the
specific form of the potential $q_1$ that allows us to transform one
equation into the other. As in section \ref{sec:diff}, we define
$D^\eps(\bx)=D_0(\bx)+D_1(\frac{\bx-\bx_0}{\eps})$ and to simplify the
presentation, assume that $D_0$ is constant in $\Omega$.  We assume
that $D_1 \in \calC^2(\Omega)$ with support included in $B$  and that
$D_0+D_1$ is strictly positive in $\Omega$, so that we can define
\begin{equation} \label{eq:defq1}
q_1(\bx):=\frac{\Delta \sqrt{D_0+D_1(\bx)}}{\sqrt{D_0+D_1(\bx)}}.
\end{equation}
We then consider the function $v^\eps$ which satisfies (\ref{eq:v})
with $q_0 = 0$, $\eta=0$ and $q_1$ defined as above. With such a
choice, the quantity
$$
\frac{v^\eps(\bx)}{\sqrt{D_0+D_1(\frac{\bx-\bx_0}{\eps})}}
$$
solves (\ref{eq:diff}). Since $\eta=0$, we may expect from the
expansion given in theorem \ref{th:asympV} that the inclusion induces
a correction of order $\eps^{d-2}$ whereas the same inclusion induces
a correction of order $\eps^d$ in the diffusion equation.  Some
simplifications due to the particular form of the potential $q_1$ must
render the correction of order $\eps^d$ in the Helmholtz framework as
well. We state the main result of this section:
\begin{proposition} \label{prop:equiv} When $q_1$ has the form
  (\ref{eq:defq1}), then we have
\begin{eqnarray}
\sum_{j=0}^{d+1} \frac{\eps^{|j|}}{j!} \partial^j V (\bx_0)\left(Q_{0j}+Q_{0j}^0\right)&=&\calO(\eps^{d+2}), \label{eq:zero1}\\
\sum_{i=0}^{d+1} \frac{\eps^{|i|}}{i!} \partial^i N (\bx_0,\by)\left(Q_{i0}+Q_{i0}^0\right)&=&\calO(\eps^{d+2}).\label{eq:zero2}
\end{eqnarray}
Here, the index $0$ of the polarization tensors represents the vector
of $\NN^d$ with components all equal to zero. We have the following
relation between the polarization tensor $M$ in the context of theorem
\ref{th:asymp} and the polarization tensor  $\tilde
M:=\sqrt{D_0}(Q+Q^0)$ in the context of the Helmholtz equation:
\begin{equation}
M_{ij}=\tilde M_{ij}, \qquad |i|=|j|=1, \label{equiv1}
\end{equation}
\begin{equation}
\begin{array}{l}
\DST \sum_{|j|=1}^{d+1} \sum_{|i|=1}^{d+1} \frac{\eps^{|i|+|j|}}{i!j!} \partial^i N(\bx_0,\by) \partial^j V (\bx_0) \big(M_{ij}-
\tilde M_{ij}\big) =\calO(\eps^{d+2}).\label{equiv2}
\end{array}
\end{equation} 
\end{proposition}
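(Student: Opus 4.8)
\proof
The plan is to read the statement through the Liouville change of variables \emph{at the level of the local correctors}. Write $\gamma(\bx)=\sqrt{D_0+D_1(\bx)}$, so that $q_1=\Delta\gamma/\gamma$ and $\gamma\equiv\sqrt{D_0}$ outside the support of $D_1$, which is compactly contained in $B$; recall that $v^\eps/\gamma\big((\cdot-\bx_0)/\eps\big)$ solves (\ref{eq:diff}), so on $\partial\Omega$ the trace of $v^\eps$ equals $\sqrt{D_0}$ times the trace of a diffusion solution. Since, with $D_0$ constant, the $M^2$--contribution in theorem \ref{th:asymp} drops and both theorem \ref{th:asymp} and theorem \ref{th:asympV} (with $q_0\equiv0$, $\eta=0$) describe that same boundary trace up to this constant, the whole proposition reduces to comparing, term by term in $\eps$, the corrector combinations entering the two polarization tensors. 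Concretely, for $H$ harmonic near $\bx_0$ (e.g. $H=V$, or $H=N(\cdot,\by)$ with $\by\in\partial\Omega$, admissible since $\bx_0\notin\partial\Omega$), set $c_j=\partial^jH(\bx_0)$, $P_\eps(\bx)=\sum_{|j|=0}^{d+1}\frac{\eps^{|j|}}{j!}c_j\bx^j$ (a harmonic polynomial, being a truncated Taylor polynomial of a harmonic function), $\mathcal{W}^{\mathrm D}_\eps=\sum_j\frac{\eps^{|j|}}{j!}c_j(\bx^j+\phi_{j0}^0)$ and $\mathcal{W}^{\mathrm H}_\eps=\sum_j\frac{\eps^{|j|}}{j!}c_j(\bx^j+\phi_{j}^0)$, with $\phi_{j0}^0$ as in (\ref{eq:phijkl1})--(\ref{eq:phijkl2}) and $\phi_j^0=\phi_j^\eta|_{\eta=0}$ from (\ref{eqphith2}).

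The central step is the identity $\gamma\,\mathcal{W}^{\mathrm D}_\eps=\sqrt{D_0}\,\mathcal{W}^{\mathrm H}_\eps$ on $\Rm^d$ (up to $\calO(\eps^{d+2})$). To prove it: from (\ref{eq:phijkl1}) with $k=l=0$ one has $\nabla\cdot\big((D_0+D_1)\nabla\mathcal{W}^{\mathrm D}_\eps\big)=D_0\Delta P_\eps=0$, so the Liouville identity $\gamma^{-1}\nabla\cdot(\gamma^2\nabla f)=\Delta(\gamma f)-q_1\gamma f$ gives $-\Delta(\gamma\mathcal{W}^{\mathrm D}_\eps)+q_1(\gamma\mathcal{W}^{\mathrm D}_\eps)=0$; on the Helmholtz side, $(\calI+T)(\bx^j+\phi_j^0)=\bx^j$ together with $-\Delta(T\varphi)=q_1\varphi$ yields $-\Delta(\bx^j+\phi_j^0)+q_1(\bx^j+\phi_j^0)=-\Delta\bx^j$, hence $-\Delta(\sqrt{D_0}\,\mathcal{W}^{\mathrm H}_\eps)+q_1(\sqrt{D_0}\,\mathcal{W}^{\mathrm H}_\eps)=-\sqrt{D_0}\,\Delta P_\eps=0$. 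Thus $\xi:=\gamma\mathcal{W}^{\mathrm D}_\eps-\sqrt{D_0}\,\mathcal{W}^{\mathrm H}_\eps$ solves $-\Delta\xi+q_1\xi=0$ in $\Rm^d$; off $B$ it equals $\sqrt{D_0}\sum_j\frac{\eps^{|j|}}{j!}c_j(\phi_{j0}^0-\phi_j^0)$, which vanishes at infinity, so writing $\xi=-T\xi+h$ with $h$ harmonic forces $h\equiv0$ and then $\xi\equiv0$ by the injectivity of $\calI+T$ contained in \textbf{(H-2)}. In dimension $d=2$ the Helmholtz correctors decay only logarithmically and one must invoke the symmetry hypothesis \textbf{(H-3)} (or an analogous mean-zero condition) to close this uniqueness step; this, together with the truncation of the corrector sums at degree $d+1$, is why the identity and the proposition are stated only modulo $\calO(\eps^{d+2})$.

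Given this identity, (\ref{eq:zero1}) and (\ref{eq:zero2}) follow by an integration-by-parts trick. The left side of (\ref{eq:zero1}) is $\int_B q_1\mathcal{W}^{\mathrm H}_\eps\,d\bx$ (taking $H=V$), which by the identity and $q_1\gamma=\Delta\gamma$ equals $D_0^{-1/2}\int_B(\Delta\gamma)\mathcal{W}^{\mathrm D}_\eps\,d\bx$. Since $D_1$, hence $\gamma-\sqrt{D_0}$, has support compactly inside $B$, $\nabla\gamma=0$ on $\partial B$; and since $\mathcal{W}^{\mathrm D}_\eps=P_\eps+(\text{a function harmonic and decaying off }B)$ with $\Delta P_\eps=0$, its flux through $\partial B$ vanishes. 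One integration by parts gives $\int_B(\Delta\gamma)\mathcal{W}^{\mathrm D}_\eps=-\int_B\nabla\gamma\cdot\nabla\mathcal{W}^{\mathrm D}_\eps$; a second one, combined with $\gamma\,\Delta\mathcal{W}^{\mathrm D}_\eps=-\gamma(D_0+D_1)^{-1}\nabla D_1\cdot\nabla\mathcal{W}^{\mathrm D}_\eps=-2\,\nabla\gamma\cdot\nabla\mathcal{W}^{\mathrm D}_\eps$ (using $\nabla\cdot((D_0+D_1)\nabla\mathcal{W}^{\mathrm D}_\eps)=0$ and $D_1=\gamma^2-D_0$), gives $-2\int_B\nabla\gamma\cdot\nabla\mathcal{W}^{\mathrm D}_\eps$. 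Comparing, $\int_B\nabla\gamma\cdot\nabla\mathcal{W}^{\mathrm D}_\eps=0$, so $\int_B(\Delta\gamma)\mathcal{W}^{\mathrm D}_\eps=0$ and (\ref{eq:zero1}) holds; the same computation with $H=N(\cdot,\by)$ gives (\ref{eq:zero2}).

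Finally, (\ref{equiv1}) and (\ref{equiv2}) come from unwinding the definitions of the tensors through this machinery. For $|i|=|j|=1$ the polynomial $\bx^j$ is harmonic, so the identity specializes to $\sqrt{D_0}(\bx^j+\phi_j^0)=\gamma(\bx^j+\phi_{j0}^0)$, whence the two correctors coincide in a neighbourhood of $\partial B$; starting from $M_{ij}=\int_BD_1\nabla(\bx^j+\phi_{j0}^0)\cdot\nabla\bx^i\,d\bx$, an integration by parts using $D_1=\gamma^2-D_0$, $\nabla\cdot(\gamma^2\nabla(\bx^j+\phi_{j0}^0))=0$, $\Delta\bx^i=0$ and $\gamma^2=D_0$ on $\partial B$, then the replacement of $\phi_{j0}^0$ by $\phi_j^0$ on $\partial B$ and Green's second identity on $B$, produces $M_{ij}=D_0\int_B\bx^iq_1(\bx^j+\phi_j^0)\,d\bx$, i.e. (\ref{equiv1}). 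For (\ref{equiv2}), summing the definitions of $M_{ij}$ and $\tilde M_{ij}$ against $\frac{\eps^{|i|+|j|}}{i!j!}\partial^iN(\bx_0,\by)\partial^jV(\bx_0)$ turns them into $\int_BD_1\nabla\mathcal{W}^{\mathrm D}_\eps\cdot\nabla A_\eps\,d\bx$ and (a constant times) $\int_Bq_1\mathcal{W}^{\mathrm H}_\eps A_\eps\,d\bx$, with $A_\eps$ the harmonic Taylor combination attached to $N(\cdot,\by)$; the central identity converts one into the other and the integration by parts of the previous paragraph makes them cancel, leaving only the contributions of the index value $0$ — which are exactly the quantities proved to vanish in (\ref{eq:zero1})--(\ref{eq:zero2}) — the terms with $|i|+|j|\ge d+2$, which are $\calO(\eps^{d+2})$ by inspection, and boundary terms on $\partial B$ that vanish because the correctors agree there. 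The hard part of the whole argument is the decay of the local correctors at infinity powering the uniqueness step in the central identity — precisely the obstruction flagged in the introduction — which also restricts the comparison to $d\le5$ and forces the extra hypothesis when $d=2$.
\endproof
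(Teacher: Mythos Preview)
Your approach is close in spirit to the paper's --- both hinge on the Liouville change of variables --- but you organise it differently. The paper's workhorse is a local integration-by-parts lemma (its lemma~\ref{integpart}): for $v$ solving $-\Delta v+q_1v=h$ on $B$ and $\varphi$ harmonic on $B$, one has $\int_B q_1 v\varphi=D_0^{-1/2}\int_B D_1\nabla(v/\gamma)\cdot\nabla\varphi-\int_B h\varphi$. The paper applies this lemma repeatedly (with $\varphi=1$ for (\ref{eq:zero1}), with $\varphi=N(\bx_0+\eps\bx,\by)$ for (\ref{eq:zero2}), with $\varphi=\bx^i$ for (\ref{equiv1})), and the explicit relation you build into your ``central identity'' --- $\bx^j+\phi_{j0}^0=(\sqrt{D_0}/\gamma)(\bx^j+\phi_j^0)$ for $|j|=1$, and its summed analogue for general $j$ --- appears in the paper only as an intermediate formula. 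Your repackaging is cleaner, and for $d\ge3$ it goes through: $P_\eps$ is exactly harmonic, $\xi$ decays, and (H-2) closes the uniqueness step. (You should, however, say a word about why (H-2) holds for this particular $q_1$; the paper verifies it explicitly, again via Liouville, since otherwise the statement of the proposition is not even well-posed.)

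There is a genuine gap in dimension $d=2$, and it is a circularity rather than a mere technicality. Your uniqueness argument for the central identity needs $\xi=\gamma\mathcal{W}^{\mathrm D}_\eps-\sqrt{D_0}\mathcal{W}^{\mathrm H}_\eps$ to vanish at infinity. Outside $B$ this equals $\sqrt{D_0}\sum_j\tfrac{\eps^{|j|}}{j!}c_j(\phi_{j0}^0-\phi_j^0)$; the diffusion correctors decay, but the extended Helmholtz correctors satisfy $\sum_j c_j\phi_j^0=-T\mathcal{W}^{\mathrm H}_\eps\sim(2\pi)^{-1}\log|\by|\int_B q_1\mathcal{W}^{\mathrm H}_\eps$ at infinity. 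So the decay of $\xi$ is equivalent to $\int_B q_1\mathcal{W}^{\mathrm H}_\eps=0$, which is exactly (\ref{eq:zero1}) --- the very thing you then derive \emph{from} the identity. Citing (H-3) does not help: that hypothesis concerns the full solution $v^\eps$, not the corrector sums $\mathcal{W}^{\mathrm H}_\eps$. The paper breaks this circle by proving (\ref{eq:zero1}) first, directly from lemma~\ref{integpart} with $\varphi=1$ (the term $\nabla\varphi$ vanishes, so no global information is needed), and only afterwards uses the resulting decay. You can repair your argument the same way: establish $\int_B q_1\mathcal{W}^{\mathrm H}_\eps=0$ by a purely local computation on $B$ (the equation $-\Delta\mathcal{W}^{\mathrm H}_\eps+q_1\mathcal{W}^{\mathrm H}_\eps=0$ together with an IBP identity of lemma-\ref{integpart} type), then use this to secure the decay of $\xi$ in $d=2$, and proceed as you do.
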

\bigskip

The proof of the proposition is given in section \ref{proofprop}.
Equations (\ref{eq:zero1}) and (\ref{eq:zero2}) imply that the two
first orders in the expansion of theorem \ref{th:asympV} vanish so
that the correction is of order $\eps^d$. Equations (\ref{equiv1}) and
(\ref{equiv2}) show the equivalence of the tensors $M_{ij}$ and
$\tilde M_{ij}$ for $|i|,|j| \leq d+1$ up to an error of order
$\eps^{d+2}$, which is sufficient to show that the asymptotic
expansions on $u^\eps$ and $v^\eps$ agree up to the order $\eps^{2d}$.
The proofs can in fact be modified to show the equivalence at higher
orders as well, \textit{e.g.}, for any $r \in \NN$,
$$ 
\DST \sum_{|j|=1}^{r+1} \sum_{|i|=1}^{r+1} \frac{\eps^{|i|+|j|}}{i!j!} \partial^i N(\bx_0,\by) \partial^j V (\bx_0) \big(M_{ij}-
\tilde M_{ij}\big) =\calO(\eps^{r+2}).
$$
Furthermore, denoting by $(m_{ij})$ the modified polarization
tensor obtained from $\Phi_j$ at the end of remark \ref{rem:alt}, we
can show in this context the strict equality between the Helmholtz and
diffusion tensors, that is $\tilde M_{ij}=m_{ij}$, for all $i,j$.

\section{Proofs of the main results} 
\label{sec:proofth}

\subsection{Asymptotic expansions for the diffusion equation}
\label{sec:resdiff}

We now prove theorems \ref{th:asymp} and \ref{th2} and proposition
\ref{prop:jump}.

\begin{proofof}{\em Theorem \ref{th:asymp}}.
The starting point of the proof is the formulation of (\ref{eq:diff})
as the following integral equation:
\begin{eqnarray}
u^\eps(\by)&=&U(\by)-\int_{\bx_0+\eps B} D_1\left(\frac{\bx-\bx_0}{\eps}\right) \nabla u^\eps(\bx) \cdot 
\nabla_\bx N(\bx,\by) d\bx,\nonumber \\ \label{eq:int}
&=&U(\by)-\eps^d\int_{B} D_1\left(\bx\right) \nabla u^\eps(\bx_0+\eps \bx)
 \cdot \nabla_\bx N(\bx_0+\eps \bx,\by) d\bx.
\end{eqnarray} 
The above equation is justified rigorously as in the derivation of
(\ref{integappend}) in lemma \ref{lemappend1} of the appendix. We
highlight the main differences.  According to proposition
\ref{prop:decompNdiff}, we have $\nabla_\bx N(\bx,\by)=D_0^{-1}(\bx)
\nabla \Gamma(\bx-\by)+\nabla_\bx R_2(\bx,\by)$, with $\nabla_\bx
R_2(\cdot,\by) \in L^2(\Omega)$ for every $\by$ in $\Omega$ so that
the above equation makes sense in $L^2(\Omega)$ and therefore almost
everywhere in $\Omega$ thanks to the Young inequality since $\nabla
u^\eps \in L^2(\Omega)$ and $\nabla \Gamma \in
L^1_\textrm{loc}(\Rm^d)$. The integral equation (\ref{eq:int}) is
obtained from the variational formulations of (\ref{eq:diff}) and
(\ref{eq:diffD0}):
\begin{eqnarray}
\int_\Omega D^\eps \nabla u^\eps \cdot \nabla v\, d\bx &=& \int_{\partial \Omega} g v \,d\sigma(\bx) \,\,=\,\,\int_\Omega D_0 \nabla U \cdot \nabla v \,d\bx \label{varia-uU},
\end{eqnarray}
for all $v \in H^1(\Omega)$.  Then, let $\varphi \in L^2(\Omega)$ and
set $v(\bx):=\int_{\Omega} N(\bx,\by) \varphi(\by) d\by$. Thus $v$ is
the unique solution in $H^1(\Omega)$ to $ -\nabla \cdot D_0 \nabla v=
\varphi $ equipped with homogeneous Neumann conditions and the
normalization $\int_{\partial \Omega} v d\sigma(\bx)$=0. As in the
proof of (\ref{integappend}) or in the proof of proposition
\ref{prop:jump}, we verify that Fubini's theorem applies and that
$$
\int_{\Omega} \left( \int_{\Omega} D_0(\bx) \nabla u(\bx) \cdot \nabla_\bx N(\bx,\by) d\bx- u(\by) \right) \varphi(\by) d\by=0, \qquad \forall u \in H^1(\Omega).
$$
Applying the latter equality to both $u^\eps$ and $U$, gives
(\ref{eq:int}) together with (\ref{varia-uU}).

To continue the proof of theorem, we write $u^\eps=U+w^\eps$ as the
sum of the unperturbed solution $U$ and a corrector $w^\eps$, solution
of
\begin{equation}
\label{eq:diffw}
\begin{array}{rll}
 \nabla \cdot \left(D_0(\bx)+D_1\left(\frac{\bx-\bx_0}{\eps} \right) \right)\nabla w^\eps=-\nabla \cdot D_1\left(\frac{\bx-\bx_0}{\eps} \right) \nabla U, \quad \textrm{in}\; \Omega,\\[1mm]
\DST \frac{\partial w^\eps}{\partial \bn }=0, \quad \textrm{on}\; \partial \Omega,\quad
\DST \int_{\partial \Omega} w^\eps(\bx) d \sigma(\bx)=0.
\end{array} 
\end{equation} 
Since both $u^\eps$ and $U$ belong to $H^{1}(\Omega)$, then $w^\eps
\in H^{1}(\Omega)$ and we deduce from (\ref{eq:diffw}) that:
$$
\|\nabla w^\eps \|_{L^2(\Omega)} \leq C \eps^{\frac{d}{2}}
\|D_1\|_{L^\infty(B)} \|\nabla U\|_{L^\infty(B_0)},
$$
for some $\bx_0+\eps_0 B\subset B_0\subset\subset\Omega$ with
$\eps_0>0$ so that, from standard elliptic regularity,
\begin{equation} \label{estim:w}
\|\nabla  w^\eps(\bx_0 +\eps \cdot) \|_{L^2(B)} \leq C \|D_1\|_{L^\infty(B)}  \|\nabla U\|_{L^\infty(B_0)} \leq C \|D_1\|_{L^\infty(B)} \|g\|_{L^2(\partial \Omega)},
\end{equation} 
for some constant $C>0$. We need an approximation of the corrector
$w^\eps$ up to the order $\eps^{d}$ and so that we decompose it as
$w^\eps(\bx_0+\eps \bx)=\Psi^\eps(\bx)+r^\eps(\bx)$, where $r^\eps$ is
a remainder of order $\eps^{d}$ in a sense made precise below.
Finding an asymptotic expression for $u^\eps$ then amounts to
calculating $\Psi^\eps(\bx)$ and showing that $r^\eps$ is indeed of
order $\eps^d$. To this aim, we use (\ref{eq:int}) to obtain an
integral equation for $w^\eps$ verified \textit{a.e.} in $\Omega$:
\begin{eqnarray}
w^\eps(\by)&=&-\eps^{d}\int_{B} D_1\left(\bx \right) 
   \nabla \big[w^\eps+U\big](\bx_0+\eps \bx ) \cdot   
  \nabla_\bx N(\bx_0+\eps\bx,\by) d\bx.
\label{eq:w2} 
\end{eqnarray}
We then decompose $N(\bx,\by)$ following (\ref{decompNdiff2}).
Plugging $(\ref{decompNdiff2})$ into $(\ref{eq:w2})$, setting
$\by:=\bx_0+\eps \by$ for $\by \in B$, and using the homogeneity
$\nabla\Gamma(\eps\bx)=\eps^{1-d}\nabla\Gamma(\bx)$, we find
\begin{eqnarray*}
w^\eps(\bx_0+\eps \by)&=&-\eps \int_{B} D_1\left(\bx \right)D_0^{-1}(\bx_0+\eps \bx) \nabla \big[w^\eps+U\big](\bx_0+\eps \bx ) \cdot   \nabla_\bx \Gamma(\bx-\by) d\bx\\
& &-\eps^d \int_{B} D_1\left(\bx \right) \nabla \big[w^\eps+U\big](\bx_0+\eps \bx ) \cdot   \nabla_\bx R_2(\bx_0+\eps \bx,\bx_0+\eps \by) d\bx.
\end{eqnarray*}
We shall prove that the contribution involving $R_2$ above is of order
$\calO(\eps^{d})$ and that up to an error of the same order, we may
replace $D_0^{-1}(\bx_0+\eps \bx)$ and $U(\bx_0+\eps \bx)$ by
$D_{0,d}^{-1}(\bx_0+\eps \bx)$ and $U_d(\bx_0+\eps \bx)$,
respectively, where for $H=D_0^{-1}$ and $H=U$, we have defined the
Taylor expansion to order $d$:
\begin{equation}
  \label{eq:approxd}
  H_{d}(\bx_0+\eps \bx)=\sum_{|m|=0}^d \dfrac{\eps^{|m|}}{m!}\,
   \left(\partial^m H\right) (\bx_0)\, \bx^m.
\end{equation}
Note that $\eps\nabla w^\eps(\bx_0+\eps
\by)=\nabla\Psi^\eps(\by)+\nabla r^\eps(\by)$. We thus want
$\Psi^\eps(\by)$ to solve:
\begin{eqnarray} \label{eq:Psiproof}
\Psi^\eps(\by)+T_{0,d}\Psi^\eps(\by) &=& -\eps \int_{B} D_1\left(\bx \right)D_{0,d}^{-1}(\bx_0+\eps \bx) \nabla U_d(\bx_0+\eps \bx ) \cdot   \nabla_\bx \Gamma(\bx-\by) d\bx,\qquad
\end{eqnarray} 
where we have introduced the notation
\begin{equation}
  \label{eq:T0d}
  T_{0,d} \Psi(\by) = \int_{B} D_1\left(\bx \right) 
   D_{0,d}^{-1}(\bx_0+\eps \bx) \nabla \Psi(\bx ) \cdot   
  \nabla_\bx \Gamma(\bx-\by) d\bx.
\end{equation}
The above equation is the integral formulation of 
\begin{displaymath}
  \begin{array}{rcl}
   \Delta \Psi^\eps +\nabla\cdot\big(D_1(\bx) D_{0,d}^{-1}(\bx_0+\eps\bx)\big)
  \nabla\Psi^\eps = -\eps \nabla \cdot\big( D_1(\bx) D_{0,d}^{-1}(\bx_0+\eps\bx)
   (\nabla U_d)(\bx_0+\eps\bx)\big).
  \end{array}
\end{displaymath}
We now thus expand $D_{0,d}^{-1}(\bx_0+\eps \bx)$ in the definition of
$T_{0,d}$ to obtain:
\begin{eqnarray*}
T_{0,d} \Psi(\by) &=& T_0 \Psi(\by)+\sum_{|m|=1}^{d} \frac{\eps^{|m|}}{m!}\left(\partial^m D_0^{-1}\right) (\bx_0) 
 \int_{B} D_1\left(\bx \right) \bx^m \nabla \Psi(\bx ) \cdot  \nabla_\bx \Gamma(\bx-\by) d\bx,\\
T_{0} \Psi(\by) &:=& \int_{B} D_1\left(\bx \right)
   D_{0}^{-1}(\bx_0) \nabla \Psi(\bx ) \cdot   
  \nabla_\bx \Gamma(\bx-\by) d\bx.
\end{eqnarray*}
Expanding $U_d $ and $D_{0,d}^{-1}$ in (\ref{eq:Psiproof}), and setting 
\begin{eqnarray*} 
\Psi^\eps(\by)&=&D_0(\bx_0)\sum_{|j|=1}^{d}\sum_{|k|=0}^{d}\frac{\eps^{|j|+|k|}}{j!k!} \left(\partial^j U\right) (\bx_0)
\left(\partial^k D_0^{-1}\right) (\bx_0) \, \Psi^\eps_{jk}(\by),
\end{eqnarray*}
leads to the following equation for $\Psi^\eps_{jk}$:
\begin{eqnarray*}
(I+T_0) \Psi^\eps_{jk}(\by) &=&-\sum_{|m|=1}^{d} \frac{\eps^{|m|}}{m!}\left(\partial^m D_0^{-1}\right) (\bx_0) 
 \int_{B} D_1\left(\bx \right) \bx^m \nabla \Psi^\eps_{jk}(\bx ) \cdot  \nabla_\bx \Gamma(\bx-\by) d\bx,\\
&&-D_0(\bx_0)^{-1}\int_{B} D_1\left(\bx \right) \bx^k \nabla \bx^j\cdot   \nabla_\bx \Gamma(\bx-\by) d\bx.
\end{eqnarray*}
Equating like powers of $\eps$, we verify that 
\begin{math}
  \Psi^\eps_{jk}(\by)=\sum_{l=0}^d
\frac{\eps^{l}}{l!} \phi_{jk}^l(\by),
\end{math}
where $\phi_{jk}^l$ solves the following integral equation
$\textit{a.e.}$ in every bounded set of $\Rm^d$:
\begin{eqnarray*}
(I+T_0) \phi_{jk}^l(\by)&=& 
-\sum_{|m|=1}^{l} \frac{l!(\partial^m D_0^{-1})(\bx_0)}{m! (l-|m|)!}   \int_{B} D_1\left(\bx \right) \bx^m \nabla \phi_{jk}^{l-|m|}(\bx) \cdot   \nabla_\bx \Gamma(\bx-\by) d\bx\\
&&-\delta_l^0D_0^{-1}(\bx_0)\int_{B} D_1\left(\bx \right) \bx^k \nabla \bx^j \cdot   \nabla_\bx \Gamma(\bx-\by) d\bx.
\end{eqnarray*}
Existence and uniqueness of solutions in $H^1_\textrm{loc}(\Rm^d)\cap
\calC^\infty(\Rm^d \backslash \overline{B})$ to the above equations
follows from lemma \ref{lemappend1} of the appendix: we first prove
the result for $\phi_{jk}^0$, then for $\phi_{jk}^1$ which depends
only on $\phi_{jk}^0$, and finally for all $\phi_{jk}^m$ iteratively.
Moreover, according to the lemma, $\phi_{jk}^l$ solves the system of
differential equations given in \eqref{eq:phijkl1}.  The function
$\Psi^\eps$ thus belongs to the space $H^1_\textrm{loc}(\Rm^d)\cap
\calC^\infty(\Rm^d \backslash \overline{B})$ by construction.  When
$D_0$ is constant and equal to $D_0(\bx_0)$ in the set $\bx_0+\eps B$,
we do not need to expand $D_0^{-1}$. We thus have
$D_{0,d}^{-1}=D_0^{-1}(\bx_0)$ and $ \phi_{j0}^0$ can be identified
with $\Psi^\eps_{j0}$.

We then verify that the remainder
$r^\eps(\by)=w^\eps(\bx_0+\eps\by)-\Psi^\eps(\by)$ belongs to $H^1(B)$
by construction and moreover solves the integral equation:
\begin{displaymath}
  \begin{array}{rcl}
(I+T_{0,d})r^\eps(\by)= S^\eps(\eps\by)
   - \eps^{d+2} \!\dint_B\!
   D_1(\bx) \big[S_1(\bx)\nabla w^\eps(\bx_0+\eps\bx)
   + \bS_2(\bx)\big]\!\cdot\!\nabla_\bx\Gamma(\bx-\by) d\bx 
  \end{array}
\end{displaymath}
where $S_1$ is the remainder of the $d+1$ order Taylor expansion of
$D_0^{-1}(\bx_0+\eps \bx)$ (so that $D_0^{-1}(\bx_0+\eps
\bx)=D_{0,d}^{-1}(\bx_0+\eps \bx)+\eps^{d+1}S_1(\bx)$),  $\bS_2$
the remainder of $D_0^{-1}(\bx_0+\eps \bx) \nabla U(\bx_0+\eps \bx )$
and where we have defined
\begin{displaymath}
  S^\eps(\eps \by) = 
  -\eps^d\int_{B} D_1\left(\bx \right) \nabla u^\eps(\bx_0+\eps \bx ) 
   \cdot   \nabla_\bx R_2(\bx_0+\eps \bx,\bx_0+\eps \by) d\bx.
\end{displaymath}
We may now decompose $r^\eps$ as
$r^\eps(\by):=r^\eps_1(\by)+r^\eps_2(\by)+S^\eps(\eps \by)$ with:
\begin{eqnarray*}
(I+T_{0,d}) r^\eps_1(\by)&=& -
 \eps
  \int_{B} D_1(\bx) D_{0,d}^{-1}(\bx_0+\eps \bx)  \nabla S^\eps(\eps \by)\cdot \nabla_\bx \Gamma(\bx-\by) d\bx,\\
(I+T_{0,d}) r^\eps_2(\by)&=& -\eps^{d+2}  \int_{B} D_1\left(\bx \right) \big[S_1(\bx)\nabla w^\eps(\bx_0+\eps\bx)
   + \bS_2(\bx)\big] \cdot \nabla_\bx \Gamma(\bx-\by) d\bx. \nonumber 
\end{eqnarray*} 
We know from the hypotheses in (\ref{eq:hypD}) that for all $\by \in
\overline{B}$, $D_0(\bx_0+\eps\by)+D_1(\bx)\geq C_0>0$, so that
setting $0<\eps \leq \eps_0 $ for $\eps_0$ small enough, we have
$1+D_1(\bx)D^{-1}_{0,d}(\bx_0+\eps\by)\geq C_1>0$, for another
constant $C_1$ independent of $\eps$. An application of lemma
\ref{lemappend1} then yields that $r_1^\eps$ and $r_2^\eps$ are
uniquely defined in $H^1_\textrm{loc}(\Rm^d)\cap \calC^\infty(\Rm^d
\backslash \overline{B})$.  Moreover, following lemma
\ref{lemappend1}, we have the estimates:
\begin{eqnarray*} 
\|\nabla r_1^\eps \|_{L^2(\Rm^d)} &\leq& C 
 \eps \|D_1\|_{L^\infty(B)}\| \nabla S^\eps (\eps\cdot)\|_{L^\infty(B)},\\
\|\nabla r_2^\eps \|_{L^2(\Rm^d)} &\leq&  C \eps^{d+2} \|D_1\|_{L^\infty(B)} \left(\| \nabla w^\eps(\bx_0+\eps \cdot) \|_{L^2(B)}+\| D_0^{-1} \nabla U\|_{\calC^{d+1}(B_0)} \right) ,\nonumber\\
&\leq &C \eps^{d+2} \|D_1\|^2_{L^\infty(B)} \|g\|_{L^2(\partial \Omega)}, \label{estimr2}
\end{eqnarray*} 
according to (\ref{estim:w}) and by elliptic regularity, where $B_0$
is as above \eqref{estim:w}. It thus remains to estimate $S^\eps$.
$\mbox{}$From proposition \ref{prop:decompNdiff}, we know that $R_2\in
\calC^\infty(\Omega \times \Omega)$, which yields:
\begin{eqnarray*}
\| \nabla S^\eps(\eps\cdot) \|_{L^\infty(B)} &\leq& C \eps^d \|D_1\|_{L^\infty(B)} \left(\| \nabla w^\eps(\bx_0+\eps \cdot) \|_{L^2(B)}+ \| \nabla U \|_{L^\infty(B_0)}\right) \times \\
  &&\times \| \nabla_\bx \nabla_\by R_2 \|_{L^\infty(B_0\times B_0)}
\qquad\leq\qquad \eps^d \|D_1\|^2_{L^\infty(B)} \|g\|_{L^2(\partial \Omega)}.
\end{eqnarray*}
Gathering the different estimates for $r^\eps_1$, $r^\eps_2$ and
$S^\eps$, we obtain that
$$
\|\nabla r^\eps\|_{L^2(B)} \leq  C \eps^{d+1} \|D_1\|^2_{L^\infty(B)} \|g\|_{L^2(\partial \Omega)}.
$$
To conclude the proof, we go back to (\ref{eq:int}) and take the
trace on $\partial \Omega$. Plugging $\nabla w^\eps(\bx_0+\eps
\bx)=\eps^{-1}(\nabla \Psi^\eps(\bx)+\nabla r^\eps(\bx))$ into
(\ref{eq:int}), it just remains to expand $\nabla U(\bx_0+\eps \bx)
\in \calC^\infty(\overline{B})$ and $ \nabla_\bx N(\bx_0+\eps
\bx,\by)$ thanks to (\ref{decompNdiff3}) since we find, \textit{a.e.}
in $ \partial \Omega$, that:
\begin{eqnarray*}
\left. u^\eps(\by) \right|_{\partial \Omega}\!\!&=&\!\!
  \left.  U(\by)  \right|_{\partial \Omega}-\eps^d\! \int_{B}  \!\!
D_1(\bx)\Big(\nabla U(\bx_0+\eps \bx)+ \dfrac 1\eps \nabla  \Psi^\eps(\bx) \Big)\!\cdot \!
\nabla_\bx \left. N(\bx_0+\eps \bx,\by) \right|_{\partial \Omega} d\bx\\
&&\!+\calO(\eps^{2d}).
\end{eqnarray*}
The asymptotic expansion of remark \ref{rem:alt} is obtained by
decomposing $w^\eps$ slightly differently. We write
$w^\eps(\bx_0+\eps\bx)=\Psi^\eps(\bx)+r^\eps(\bx)$, where $\Psi^\eps$
is now  given by
\begin{eqnarray*}
\Psi^\eps(\by)+T^\eps\Psi^\eps(\by) &=& -\eps \int_{B} D_1\left(\bx \right)D_{0}^{-1}(\bx_0+\eps \bx) \nabla U_d(\bx_0+\eps \bx ) \cdot   \nabla_\bx \Gamma(\bx-\by) d\bx,\qquad\\
T^\eps\Psi(\by) &:=& \int_{B} D_1\left(\bx \right)
   D_{0}^{-1}(\bx_0+\eps \bx) \nabla \Psi(\bx ) \cdot   
  \nabla_\bx \Gamma(\bx-\by) d\bx.
\end{eqnarray*} 
We then verify that the remainder $r^\eps$ is of order $\eps^d$ and that expanding $U_d$ and setting
\begin{math}
  \Psi^\eps(\bx)=\sum_{|j|=1}^d \frac{\eps^{|j|}}{j!}\,
   \left(\partial^j U\right) (\bx_0)\, \Psi^\eps_j,
\end{math}
leads to the desired result.
\end{proofof}
\bigskip

\begin{proofof}{\em Theorem \ref{th2}}. 
  Let $D_1$ be a non-regular perturbation and let $\chi^\eta$ be the
  cut-off function with support in $B$ defined as
 \begin{equation}
\label{eq:cut}
 \left\{ 
\begin{array}{l}
 \chi^\eta(\bx)=1, \quad \textrm{for } \bx \in B   \textrm{ such that dist}(\bx,\partial B) > \eta,\\
 \chi^\eta(\bx)=0, \quad \textrm{otherwise}.
\end{array} \right.
\end{equation}
The parameter $\eta$ will be adjusted according to $\eps$. Let now
$\rho^\eta(\bx):=\eta^{-d}\rho(\eta^{-1} \bx)$ be a standard mollifier
and let $D_1^\eta:=\rho^\eta*(\chi^\eta D_1)$. We verify that
$D_1^\eta \in \calC^\infty(\Rm^d)$ and that its support is included in
$B$ with a vanishing and continuous trace at the boundary. We can then
apply theorem \ref{th:asymp} to obtain an asymptotic expansion for the
solution $u^\eps_\eta$ associated to $D_1^\eta$. Since the error term
of order $\eps^{2d}$ depends only on $\| D_1^\eta
\|_{L^\infty(\Rm^d)}$ - which is bounded by $\|
D_1\|_{L^\infty(\Rm^d)}$ - it suffices to look at the limit of the
different polarization tensors to find the limiting asymptotic
expansion.

Since $D_0(\bx)+D_1\left(\frac{\bx - \bx_0}{\eps}\right)$ is bounded
from below by $C_0$, this property is still verified by the
regularized diffusion coefficient so that, according to
(\ref{estimappendphi}) of lemma \ref{lemappend1} of the appendix, the
function $\phi_{jk}^{l,\eta}$ associated to $D_1^\eta$ satisfy by
induction the estimates, for $l=0,\cdots,d$:
$$
\| \nabla \phi_{jk}^{l,\eta} \|_{L^2(\Rm^d)} \leq C \| D_1^\eta\|^{l+1}_{L^\infty(\Rm^d)}, \qquad \| \phi_{jk}^{l,\eta} \|_{L^2(A)} \leq C \| D_1^\eta\|^{l+2}_{L^\infty(\Rm^d)},
$$
for any bounded set $A$. This yields that $\nabla
\phi_{jk}^{l,\eta}$ is bounded in $L^2(\Rm^d)$ independently of $\eta$
and so is $\phi_{jk}^{l,\eta}$ in $H^1(A)$.  Defining the set
$E:=\{(j,k)\in \NN^{2d}, l \in \NN, \, 0\leq |j|, |k|, l \leq d \}$
with cardinal $|E|$, we may thus see $\{\phi_{jk}^{l,\eta}\}_{E}$ as
bounded in $(H^1(A))^{|E|}$ and extract a subsequence as $\eta\to0$
converging strongly in $(L^2(A))^{|E|}$ and with gradient converging
weakly in $(L^2(\Rm^d))^{|E|}$ to a limit $\{\phi_{jk}^{l}\}_{E}$. We
obtain that $\phi_{jk}^{l}\in H^1_\textrm{loc}(\Rm^d)$ and $\nabla
\phi_{jk}^{l} \in L^2(\Rm^d)$.  To find the equation solved by $\nabla
\phi_{jk}^{l}$, we consider the weak formulation verified by
$\phi_{jk}^{l,\eta}$, which is, for all functions $\varphi \in
H^1_\textrm{loc}(\Rm^d)$ such that $\nabla \varphi \in L^2(\Rm^d)$,
$R^{-d} \|\varphi \|_{L^1(S_R)} \to 0$ as $R \to \infty$, where $S_R$
denotes the sphere of radius $R$,
$$
\begin{array}{l}
\DST \int_{\Rm^d }\left(D_0(\bx_0)+D^\eta_1(\bx) \right)\nabla \phi_{jk}^{l,\eta} \cdot \nabla \varphi\, d\bx=
\DST -\,\delta_l^0\, \int_{B } D^\eta_1\left(\bx \right) \bx^k \nabla \bx^j \cdot \nabla \varphi \, d\bx \\
\DST \quad -D_0(\bx_0)\sum_{|m|=1}^{l} \frac{l!}{m! (l-|m|)!}\left(\partial^m D_0^{-1}\right) (\bx_0)\int_{B }D^\eta_1\left(\bx \right) \bx^m \nabla \phi_{jk}^{l-|m|,\eta}(\bx) \cdot \nabla \varphi \, d\bx.
\end{array}
$$
The above formulation is justified in lemma \ref{lemappend1} below; see
(\ref{weakform}). Since $D_1^\eta$ converges strongly in any
$L^p(\Rm^d)$ for $1 \leq p < \infty$, we can  pass to the limit
in the non-linear terms above and obtain the following limiting equation:
\begin{equation}
\begin{array}{l}
\DST \int_{\Rm^d }\left(D_0(\bx_0)+D_1(\bx) \right)\nabla \phi_{jk}^{l} \cdot \nabla \varphi\, d\bx=-\,\delta_l^0\, \int_{B } D_1\left(\bx \right) \bx^k \nabla \bx^j \cdot \nabla \varphi \, d\bx\\
\DST \hspace{0.1cm} -D_0(\bx_0)\sum_{|m|=1}^{l} \frac{l!}{m! (l-|m|)!}\left(\partial^m D_0^{-1}\right) (\bx_0)\int_{B }D_1\left(\bx \right) \bx^m \nabla \phi_{jk}^{l-|m|}(\bx) \cdot \nabla \varphi \, d\bx.
\end{array} \label{limphi}
\end{equation}
To obtain the behavior of $\phi_{jk}^{l}$ at infinity, we use the
integral formulation given in (\ref{integappend}) of lemma
\ref{lemappend1} for the subsequence $\phi_{jk}^{l,\eta}$ and obtain,
\textit{a.e.} in every bounded set $\Omega' \subset \Rm^d$:
\begin{eqnarray*} 
(I+T_0)\phi_{jk}^{l,\eta}(\by)&=&
-\sum_{|m|=1}^{l}  \frac{l!\partial^m D_0^{-1} (\bx_0)}{m! (l-|m|)!} \int_{B} D_1^\eta\left(\bx \right) \bx^m \nabla \phi_{jk}^{l-|m|,\eta}(\bx) \cdot   \nabla_\bx \Gamma(\bx-\by) d\bx\\
&&-\delta_l^0 D_0^{-1}(\bx_0)\int_{B} D_1^\eta\left(\bx \right) \bx^k \nabla \bx^j \cdot   \nabla_\bx \Gamma(\bx-\by) d\bx.
\end{eqnarray*}
The above equation makes sense in $L^2(\Omega')$ and therefore almost
everywhere in $\Omega'$ since $\nabla \phi_{jk}^{l,\eta} \in
L^2(\Omega)$, $l=0,\cdots,d$, and $\nabla \Gamma \in
L^1_\textrm{loc}(\Rm^d)$ so that the right hand side is finite thanks
to the Young inequality. Consider now a compact set $K \subset \Rm^d$
such that $\textrm{dist}(K, B)>C>0$. The above equation is then
verified uniformly in $K$ and moreover $\phi_{jk}^{l,\eta} \in
\calC^0(K)$. Since $\nabla \phi_{jk}^{l,\eta}$ converges weakly to
$\nabla \phi_{jk}^{l}$ for $0\leq l\leq d$ and $D_1^\eta$ converges
strongly, it follows from the above equation that $\phi_{jk}^{l,\eta}$
is a Cauchy sequence in $\calC^0(K)$ so that it converges uniformly to
the solution, for all $\bx\in K$, to
\begin{eqnarray}
(I+T_0)\phi_{jk}^{l}(\by)&=&
- \sum_{|m|=1}^{l} \frac{l!\partial^m D_0^{-1} (\bx_0)}{m! (l-|m|)!} \int_{B} D_1\left(\bx \right) \bx^m \nabla \phi_{jk}^{l-|m|}(\bx) \cdot   \nabla_\bx \Gamma(\bx-\by) d\bx \nonumber \\
&&-\delta_l^0 D_0^{-1}(\bx_0) \int_{B} D_1\left(\bx \right) \bx^k \nabla \bx^j \cdot   \nabla_\bx \Gamma(\bx-\by) d\bx. 
\label{intphilim} 
\end{eqnarray}
The fact that $\nabla_\bx \Gamma(\bx-\by)=\calO(|\by|^{1-d})$ for $\bx
\in B$ and $\by \in K$ yields that
$\phi_{jk}^{l}(\by)=\calO(|\by|^{1-d})$ for such values of $\by$. It
is then not difficult to see that (\ref{limphi}) is the weak
formulation of the problem given in the theorem. Notice that equation
(\ref{intphilim}) is also valid \textit{a.e.} in $A$ since
$\phi_{jk}^{l}\in L^2(A)$ for any bounded set $A$. Uniqueness follows
from (\ref{limphi}) and the behavior at infinity: the right hand side
of (\ref{limphi}) vanishes when we consider the difference of two
possible solutions. Since those solutions are sufficiently regular,
taking that difference as a test function implies the difference is a
constant which must be equal to zero according to the vanishing limit
at infinity.

Now that we have the expression of the limiting $\phi_{jk}^{l}$, it
suffices to pass to the limit in the polarization tensors using the
weak convergence of $\nabla \phi_{j\, 0}^{0,\eta}$ and the strong
convergence of $D_1^\eta$ and to choose $\eta$ small enough such that
all the errors terms coming from the different passages to the limit
are smaller than $C \eps^{2d}$.
\end{proofof}
\bigskip

\begin{proofof}{\em Proposition \ref{prop:jump}}.
  When $D_0$ is constant on the set $\bx_0+\eps B$, only the sum
  involving the polarization tensor $M$ remains in theorem
  \ref{th:asymp} as we have mentioned in remark \ref{rem2}. We thus
  start from the expression of $M$ given in theorem \ref{th2} and
  define $f_j:=\phi^0_{j0}-\phi_j$. A proof of the existence and
  uniqueness for $\phi_j$ can be found in \cite{AK-04}. Owing the
  definitions of $\phi^0_{j0}$ and $\phi_j$, we find that $f_j \in
  H^1_{\textrm{loc}}(\Rm^d)\cap \calC^\infty(\Rm^d/\overline{B})$ by
  construction and is the unique weak solution to
\begin{eqnarray*}
\nabla \cdot (D_0+\un_B(\bx) D_1) \nabla f_j &=-\un_B(\bx) D_1\Delta \bx^j, \quad \bx \in \Rm^d, 
\end{eqnarray*}
equipped with the condition at infinity:
$$
f_j(\by)+\Gamma(\by) D_0^{-1}D_1 \int_{\partial B} \bn \cdot \nabla
\bx^j d\sigma(\bx)=\calO(|\by|^{1-d}).
$$
Here, $\un_B$ is the characteristic function of the set $B$. When
$|j|=1$, we obtain $f_j=0$. When $|j| \geq 2$, we need to sum over $j$
to show that $f_j$ is small in an appropriate sense. To this aim, we
derive an integral equation for $f_j$ from that of $\phi^0_{j0}$ and
$\phi_j$. As we mentioned in the proof of theorem \ref{th2},
(\ref{intphilim}) is verified \textit{a.e.} by $\phi^0_{j0}$ so that
we have
\begin{equation} \label{integphi00} D_0 D_1^{-1} \phi_{j0}^{0}(\by)= -\int_{B} \nabla \phi_{j0}^{0}(\bx) \cdot   \nabla_\bx \Gamma(\bx-\by) d\bx -\int_{B} \nabla \bx^j \cdot   \nabla_\bx \Gamma(\bx-\by) d\bx.
\end{equation}
Since $\phi_j$ is harmonic in $B \cup \Rm^d \backslash \overline{B}$,
we deduce from elliptic regularity in Lipschitz domains (see e.g.
\cite{AK-04}) that $\phi_j \in H^{\frac{3}{2}}(B)$ so that its inner
normal derivative at the boundary $\partial B$ belongs to
$L^2(\partial B)$. This allows us to express $\phi_j$ in terms of
single layer potential, using the jump of its normal derivative at the
boundary given in proposition \ref{prop:jump}, as
\begin{equation} \label{single}
D_0 D_1^{-1}\phi_j(\by)=- \int_{\partial B}\left(\left.\frac{\partial  \phi_{j}}{\partial \bn }\right|_-(\bx) + \bn \cdot \nabla \bx^j \right) \Gamma(\bx-\by) d\sigma(\bx).
\end{equation}
The latter equation is verified in $L^1(A)$ for any bounded set $A
\subset \Rm^d$, and thus $\textit{a.e.}$ since
$\|\Gamma(\bx-\cdot)\|_{L^1(A)}$ is uniformly bounded in $\bx \in
\partial B$. Moreover, since $\phi_j$ is harmonic in $B$, we have for
any $\varphi \in H^1(B)$:
$$
\int_B \nabla \phi_j  \cdot \nabla \varphi \,  d\bx =\int_{\partial B}\left.\frac{\partial  \phi_{j}}{\partial \bn }\right|_- \varphi \,  d\sigma(\bx).
$$
Let $\psi \in \calC^0_c(A)$ and set $\varphi(\bx)=\int_{A}
\psi(\by) \Gamma(\bx-\by) d\by$. Using the Young inequality and the
fact that $\Gamma$ and $\nabla \Gamma$ belong to
$L^1_\textrm{loc}(\Rm^d)$, we verify that $\varphi \in H^1(B)$ so that
it can be used as a test function. Moreover, to be able to use the
Fubini theorem, we apply as in the proof of lemma \ref{lemappend1} the
Sobolev inequality \ref{lem:sobolev} to conclude that $\nabla
\phi_j(\bx) \cdot \nabla \Gamma(\bx-\by) \psi(\by)$ belongs to
$L^1(B\times A)$. In the same way, $\left.\frac{\partial
    \phi_{j}}{\partial \bn }\right|_-(\bx) \Gamma(\bx-\by)\psi(\by)$
belongs to $L^1(\partial B\times A)$ since
$$
\int_{\partial B} \int_{A} \left|\left.\frac{\partial  \phi_{j}}{\partial \bn }\right|_-(\bx) \Gamma(\bx-\by)\psi(\by) \right| d\sigma(\bx)d\by \leq C \left\| \left.\frac{\partial  \phi_{j}}{\partial \bn }\right|_{-}  \right\|_{L^2(\partial B)} \|\Gamma\|_{B_a} \|\psi\|_{L^\infty(A)},
$$
for a ball of radius $a$ large enough. We may thus write:
$$
\int_A \left(\int_B \nabla \phi_j(\bx)  \cdot \nabla \Gamma(\bx-\by) d\bx-\int_{\partial B}\left.\frac{\partial  \phi_{j}}{\partial \bn }\right|_- \Gamma(\bx-\by) \,  d\sigma(\bx) \right) \psi(\by)d\by=0.
$$
Plugging (\ref{single}) into the latter equation yields:
$$
\int_A \left(D_0 D_1^{-1}\phi_j(\by)+\int_B \nabla \phi_j(\bx)  \cdot \nabla \Gamma(\bx-\by) d\bx+\int_{\partial B} \bn \cdot \nabla \bx^j\, \Gamma(\bx-\by) \,  d\sigma(\bx) \right) \psi(\by)d\by=0.
$$
Integrating (\ref{integphi00}) against $\psi$, subtracting the
equation above and performing an integration by parts, we find:
$$
\int_A \left(D_0 D_1^{-1}f_j(\by)+\int_B \nabla f_j(\bx)  \cdot \nabla \Gamma(\bx-\by) d\bx-\int_{B} \Delta \bx^j\, \Gamma(\bx-\by) \,  d\bx \right) \psi(\by)d\by=0.
$$
The quantity under parentheses belongs to $L^2(A)$. Thus, we
deduce by density that the above relation holds also for any $\psi \in
L^2(A)$ so that $f_j$ solves the following integral equation,
\textit{a.e.} in every bounded set $\Omega' \subset \Rm^d$,
\begin{eqnarray} \label{integf}
D_0 D_1^{-1} f_{j}(\by)&=& \int_{B} \nabla f_{j}(\bx) \cdot   \nabla_\bx \Gamma(\bx-\by) d\bx+\int_{B}\Delta \bx^j \Gamma(\bx-\by) d\sigma(\bx).
\end{eqnarray}
We now show that an appropriate linear combination of the $f_j$'s is of order
$\eps^{d+1}$. First, since $D_0$ is constant in $\bx_0+\eps B$,
$\Delta U(\bx_0+\eps \bx)=0$ for $\bx \in B$ according to
(\ref{eq:diffD0}), so that, using the notation in \eqref{eq:approxd},
we get that $\Delta U_d(\bx)=\Delta
(U_d(\bx)-U(\bx_0+\eps\bx))=\calO(\eps^{d+1})$ uniformly in $B$. As a
consequence, we have
 $$
R^\eps(\bx):= \Delta U_d (\bx) = \sum_{|j|=1}^d
\frac{\eps^{|j|}}{j!}\partial^j U(\bx_0) \Delta \bx^j
=\calO(\eps^{d+1}).
$$ 
Thus, defining 
$$F^\eps(\bx):=\sum_{|j|=1}^d \frac{\eps^{|j|}}{j!}\partial^j U(\bx_0)  f_j(\bx),$$
it follows:
\begin{eqnarray*}
\sum_{|j|=1}^d \frac{\eps^{|j|}}{j!}\partial^j U(\bx_0)  M_{ij}=D_1 \sum_{|j|=1}^d \frac{\eps^{|j|}}{j!}\partial^j U(\bx_0) \int_{B} \nabla (\bx^j+\phi_{j}(\bx)+f_j(\bx)) \cdot \nabla \bx^i d \bx,\\
=D_1 \sum_{|j|=1}^d \frac{\eps^{|j|}}{j!}\partial^j U(\bx_0) \int_{B} \nabla (\bx^j+\phi_{j}(\bx)) \cdot \nabla \bx^i d\bx
+D_1 \int_{B} \nabla F^\eps(\bx) \cdot \nabla \bx^i d\bx.
\end{eqnarray*}
According to the definition of $f_j$, $F^\eps \in
H^1_\textrm{loc}(\Rm^d)\cap \calC^\infty(\Rm^d/\overline{B})$ solves:
\begin{eqnarray}
\nabla \cdot (D_0(\bx_0)+\un_B(\bx) D_1) \nabla F^\eps&=-\un_B(\bx) D_1 R^\eps, \quad \bx \in \Rm^d, \label{Fprop}
\end{eqnarray}
equipped with the condition at infinity:
\begin{equation} 
\label{condinf}
F^\eps(\by)+\Gamma(\by) D_1\int_{B} R^\eps d\bx=\calO(|\by|^{1-d}).
\end{equation}
Following (\ref{integf}), $F^\eps$ thus solves the integral equation,
\textit{a.e.} in every bounded set $\Omega' \subset \Rm^d$:
\begin{eqnarray}
 \label{eq:Fepsint} 
F^\eps(\by)&=& -D_1D_0^{-1}(\bx_0) \int_{B} \big[\nabla F^\eps(\bx) \cdot   \nabla_\bx \Gamma(\bx-\by) -R^\eps(\bx)\Gamma(\bx-\by)\big] d\bx,
\end{eqnarray}
so that Young's inequality gives
\begin{equation} \label{L2F}
\| F^\eps \|_{L^2(B)} \leq C \| \nabla F^\eps \|_{L^2(B)} + C\| R^\eps \|_{L^2(B)} \leq C \| \nabla F^\eps \|_{L^2(B)}+\calO(\eps^{d+1}).
\end{equation}
Let $B_R$ be the ball of radius $R$ with $B \subset \subset B_R$ and
denote by $S_R$ its boundary. Integrating (\ref{Fprop}) on $B_R$
against $F^\eps$ leads to
\begin{equation} \label{weakFproof}
\int_{B_R} (D_0(\bx_0)+\un_B D_1)|\nabla F^\eps|^2 d\bx=-D_1 \int_B R^\eps F^\eps d\bx+\int_{S_R} \frac{\partial F^\eps}{\partial \bn} F^\eps d\sigma(\bx), 
\end{equation}
where $\sigma$ is the surface measure on $S_R$.  We may recast
condition (\ref{condinf}) using the integral equation
\eqref{eq:Fepsint} for $F^\eps$ and its derivative as
\begin{equation}
  \label{eq:Fepsalpha}
  \partial^\alpha F^\eps(\by)+\partial^\alpha\Gamma(\by) D_1\int_{B} R^\eps d\bx=\calO\left(( \|\nabla F^\eps\|_{L^2(B)}+ \|R^\eps\|_{L^2(B)}) |\by|^{1-d-|\alpha|}\right),
\end{equation}
for a multi-index $\alpha$ with $|\alpha| \leq 1$.  Consider first
$d\geq 3$. Then $\nabla F^\eps \in L^2(\Rm^d)$ and the boundary
integral in \eqref{weakFproof} goes to zero as $R$ tends to infinity
so that
\begin{equation} \label{weakF}
\int_{\Rm^d} (D_0(\bx_0)+\un_B D_1)|\nabla F^\eps|^2 d\bx=-D_1 \int_B R^\eps F^\eps d\bx. 
\end{equation}
This yields, together with (\ref{L2F}):
$$
\| \nabla F^\eps\|^2_{L^2(\Rm^d)} \leq  \calO(\eps^{2(d+1)})+\| R^\eps \|_{L^2(B)}\| \nabla F^\eps\|_{L^2(\Rm^d)},
$$
so that $\| \nabla F^\eps\|_{L^2(\Rm^d)}=\calO(\eps^{d+1})$.
Consider now the case $d=2$. We cannot use the same approach since
$F^\eps$ does not vanish at infinity.  Using \eqref{eq:Fepsalpha} for
$\by \in S_R$, we have
\begin{eqnarray*}
|F^\eps(\by)|& \leq &C \Big(\Big(\log R+\frac{1}{R}\Big)\|R^\eps\|_{L^2(B)}+\frac{1}{R}\|\nabla F^\eps\|_{L^2(B)} \Big),\\
 \Big|\frac{\partial F^\eps}{\partial \bn}(\by)\Big|& \leq &C \Big(\frac{1}{R}\Big(1+\frac{1}{R}\Big)\|R^\eps\|_{L^2(B)}+\frac{1}{R^2}\|\nabla F^\eps\|_{L^2(B)} \Big),
\end{eqnarray*}
so that, since $\|  R^\eps\|_{L^2(B)}=\calO(\eps^{d+1})$,
\begin{eqnarray*}
\Big|\int_{S_R} \frac{\partial F^\eps}{\partial \bn} F^\eps d\sigma(\bx) \Big|&\leq&
C \eps^{2(d+1)}+C_R\, \eps^{d+1} \|\nabla F^\eps\|_{L^2(B)}+\frac{C}{R^3}\|\nabla F^\eps\|^2_{L^2(B)}.
\end{eqnarray*}
Since, according to hypothesis \ref{eq:hypD},
$D_0(\bx_0)+\un_B D_1 \geq C_0>0$ \textit{a.e.} in $\Rm^d$, it follows from (\ref{L2F}), (\ref{weakFproof}) and the above inequality that:
$$
C_0 \|\nabla F^\eps\|_{B_R}^2 \leq C \eps^{2(d+1)}+\Big(\frac{C}{R^3}+\eta \Big)\|\nabla F^\eps\|^2_{L^2(B)},
$$
for any $\eta>0$. It suffices finally to set $\eta$ small enough
and $R$ large enough so that $\frac{C}{R^3}+\eta<C_0$ to obtain
$$
\| \nabla F^\eps\|_{L^2(B)} \leq \| \nabla F^\eps\|_{L^2(B_R)} = \calO(\eps^{d+1}).
$$
We end the proof with the following integration by parts:
\begin{displaymath}
  \begin{array}{rcl}
\DST\sum_{|j|=1}^d \frac{\eps^{|j|}}{j!}\partial^j U(\bx_0)  M_{ij}
&=&\DST D_1 \sum_{|j|=1}^d \frac{\eps^{|j|}}{j!}\partial^j U(\bx_0) \int_{\partial B} \bn \cdot \nabla (\bx^j+\phi_{j}(\bx)) \bx^i d \sigma(\bx)\\
&&\DST-D_1 \int_{B} R^\eps(\bx)\, \bx^i d\bx+D_1 \int_{B} \nabla F^\eps(\bx) \cdot \nabla \bx^i d\bx,\\
&=&\DST D_1 \sum_{|j|=1}^d \frac{\eps^{|j|}}{j!}\partial^j U(\bx_0) \int_{\partial B} \bn \cdot \nabla (\bx^j+\phi_{j}(\bx)) \bx^i d \sigma(\bx)+\calO(\eps^{d+1}),\\
&=&\DST\sum_{|j|=1}^d \frac{\eps^{|j|}}{j!}\partial^j U(\bx_0)  \calM_{ij}+\calO(\eps^{d+1}),
  \end{array}
\end{displaymath}
which shows that the error terms generated by $M$ and $\mathcal M$
agree up to an order $\calO(\eps^{d+1})$.
\end{proofof}

\subsection{Asymptotic expansions for the Helmholtz equation} 
\label{proofprop}

We now prove proposition \ref{prop:asympV}, theorem \ref{th:asympV},
and proposition \ref{prop:equiv}.

\begin{proofof} {\em proposition \ref{prop:asympV}.}
  We write $v^\eps:=V+w^\eps$ so that the corrector $w^\eps$
  satisfies:
\begin{equation}
\label{eq:wproof}
\begin{array}{l}
 \DST-\Delta w^\eps(\bx)+\left(q_0(\bx)+\frac{1}{\eps^{2-\eta}} q_1\left(\frac{\bx-\bx_0}{\eps} \right) \right)\, w^\eps(\bx)=-\frac{1}{\eps^{2-\eta}} q_1\left(\frac{\bx-\bx_0}{\eps} \right) V(\bx), \quad  \bx \in \Omega,\\[2mm]
 \DST  \frac{\partial w^\eps}{\partial \bn } =0,\quad \textrm{on } \partial \Omega.
\end{array} 
\end{equation} 
We need to show the existence of $w^\eps$. We first show the existence
and uniqueness of a solution to the integral formulation of
$(\ref{eq:wproof})$, which formally reads, $\by$ \textit{a.e.} in
$\Omega$:
\begin{eqnarray*} 
w^\eps+T^\eps w^\eps&=& -T^\eps V,\\
T^\eps \varphi(\by) &=& \int_{\bx_0+\eps B} q_1\left(\frac{\bx-\bx_0}{\eps} \right) \varphi(\bx) N(\bx,\by) d\bx.
\end{eqnarray*}
We consider first the case $d \geq 3$. Using the decomposition of $N$
given in proposition \ref{prop:decompN} and denoting by $w^*$ the
restriction of $w^\eps(\bx_0+\eps \by)$ to $B$ (we do not write the
dependence on $\eps$ to simplify), we recast the above system as
\begin{equation} \label{eq:wstar}
 \left\{ 
\begin{array}{l}
\DST w^*+\eps^\eta T w^*+\eps^{d-2+\eta}R^\eps w^*=-T^\eps V(\bx_0+\eps \by), \qquad \by \in B,\\
\DST T w^*(\by)=\int_{B} q_1\left(\bx \right) w^*(\bx) \Gamma(\bx- \by) d\bx,\\
\DST R^\eps w^*(\by)=\int_{B} q_1\left(\bx \right) w^*(\bx) R(\bx_0+\eps \bx, \bx_0+\eps \by) d\bx.
\end{array} \right.
\end{equation}
We have used the homogeneity $\Gamma(\eps\bx)=\eps^{2-d}\Gamma(\bx)$
when $d \geq 3$. Since $T$ and $R^\eps$ are compact operators in
$L^2(B)$, they have discrete spectra. Indeed, since $\Gamma, \nabla
\Gamma \in L^1_\textrm{loc}(\Rm^d)$, we have, using the Young
inequality for any $\varphi \in L^2(B)$,
$$
\|T \varphi \|_{L^2(B)} \leq \| q_1 \varphi \|_{L^2(B)} \| \Gamma \|_{L^1(B_a)} \leq  \| q_1\|_{L^\infty(B)} \| \varphi \|_{L^2(B)} \| \Gamma \|_{L^1(B_a)},
$$
where $B_a$ is a ball of radius $a$ large enough. Thus, proceeding
analogously for $\nabla T$,
$$
\|T \|_{\calL(L^2(B))} \leq \| q_1\|_{L^\infty(B)} \| \Gamma \|_{L^1(B_a)}, \qquad \|\nabla T \|_{\calL(L^2(B))} \leq \| q_1\|_{L^\infty(B)} \| \nabla \Gamma \|_{L^1(B_a)},
$$
and compactness stems from the Rellich theorem.  The same holds for
$R^\eps$ since it is Hilbert-Schmidt as $R(\bx_0+\eps\cdot,
\bx_0+\eps\cdot)$ belongs to $L^2(B\times B)$ (though not necessarily
uniformly in $\eps$; see below) according to proposition
\ref{prop:decompN}. In the same way, we obtain that
$$
\|T^\eps V(\bx_0+\eps \cdot) \|_{H^1(B)} \leq C \|V(\bx_0+\eps \cdot) \|_{L^2(B)},
$$
where $C$ is independent of $\eps$. It remains to show that the
operator $I+\eps^\eta T+\eps^{d-2+\eta}R^\eps$ is injective and to use
the Fredholm alternative to obtain the existence of a unique $w^* \in
H^1(B)$ verifying (\ref{eq:wstar}). Injectivity is obvious when $\eta
\in]0,2]$ since he operator norm of $\eps^\eta T+\eps^{1+\eta} R^\eps$
in $\calL(L^2(B))$ is of order $\calO(\eps^\eta)<1$ for $\eps<\eps_0$
small enough. 

When $\eta=0$, we need to use assumption $\textbf{(H-2)}$.  Since $-1$
is not an eigenvalue of $T$, it suffices to fix $\eps_0$ small enough
such that the distance between $-1$ and the nearest eigenvalue of $T$
is larger than $\eps_0^{d-2} \| R^\eps \|_{\calL(L^2(B))}$. To do so,
we remark, following proposition \ref{prop:decompN}, that uniformly in
$\by \in B$, $R(\cdot,\by) \in W^{2,p}(B)$ with $p < \frac{d}{d-2}$
when $3\leq d\leq 5$ and $p <\infty$ when $d=2$. The Sobolev embedding
then yields that $R(\cdot,\by) \in \calC^0(\overline{B})$ when $d \leq
3$ and $R(\cdot,\by) \in L^{q}(B)$ with $q<\infty$ when $d=4$ and
$q=5$ when $d=5$. Hence,
$$\| R^\eps \|_{\calL(L^2(B))} \leq C\|R (\bx_0+\eps \cdot, \bx_0+\eps
\cdot) \|_{L^2},$$
which is $\calO(1)$ for $d \leq 3$,
$\calO(\eps^{-\alpha})$ for any $\alpha>0$ when $d=4$, and
$\calO(\eps^{-1})$ for $d =5$. For the particular case $q_0 \equiv 0$,
proposition \ref{prop:decompN} gives $R \in
\calC^\infty(\overline{B}\times \overline{B})$ so that $\| R^\eps
\|_{\calL(L^2(B))}$ is bounded independently of $\eps$ for any $d$. In
any event, $\eps_0^{d-2} \| R^\eps \|_{\calL(L^2(B))}=o(\eps_0)$ so
that the Fredholm alternative yields again a unique $w^* \in L^2(B)$
solution to (\ref{eq:wstar}) for $\eps_0$ small enough. In addition,
$w^*$ satisfies the estimate:
\begin{equation} \label{estimwstar}
\| w^* \|_{L^2(B)}\leq C \eps^\eta \|V(\bx_0+\eps \cdot ) \|_{L^2(B)}.
\end{equation}
Then $w^\eps$ is given, for $\by \in \Omega$, by:
$$
 \left\{ 
\begin{array}{l}
\DST w^\eps(\by)=w^*\left(\frac{\by-\bx_0}{\eps}\right), \qquad \by \in \bx_0+\eps B,\\
\DST w^\eps(\by)=\left(-\eps^\eta T w^*-\eps^{d-2+\eta} R^\eps w^*\right)\left(\frac{\by-\bx_0}{\eps}\right) -T^\eps V(\by), \qquad \textrm{otherwise},\\
\end{array} \right.
$$
so that $w^\eps \in H^1(\Omega)$. We verify that $w^\eps$
is then a solution to the variational formulation of
(\ref{eq:wproof}).  To prove uniqueness, we show that, for a
given $u \in H^1(\Omega)$, the assertion
\begin{equation} \label{asser}
\int_\Omega \nabla u \cdot \nabla v \, d\bx +\int_\Omega \left(q_0+\frac{1}{\eps^{2-\eta}} q_1\left(\frac{\cdot-\bx_0}{\eps} \right)\right) \,u \, v d\bx=0, \qquad \forall v\in H^1(\Omega),
\end{equation}
implies $u=0$. Indeed, for $\varphi \in L^2(\Omega)$, consider the
weak solution $v \in H^1(\Omega)$ of
 $$-\Delta v+q_0 v=\varphi, \quad  \bx \in \Omega,
 $$
 augmented with homogeneous Neumann conditions on $\partial
 \Omega$. Thus, $v$ is given by $v(\by)=\int_{\Omega} N(\bx,\by)
 \varphi(\bx) d\bx$. Plugging $v$ into (\ref{asser}) leads to
$$
\int_\Omega (u+T^\eps u) \varphi d\bx=0,\qquad\forall\varphi \in L^2(\Omega),
$$
so that $u+T^\eps u=0$, which implies that $u=0$. This ends the
proof of existence of a unique solution of the variational formulation
of (\ref{eq:wproof}) when $d \geq 3$. 

We treat now the case $d=2$. When $\eta>0$, existence and uniqueness
can be established in the same manner as above.  When $\eta=0$, we use
assumption \textbf{(H-3)}. We first notice that for $d=2$, we have
$$
T^\eps w^*(\bx_0+\eps \by)=\int_{B} q_1\left(\bx \right) w^*(\bx) \Gamma(\bx- \by) d\by-\frac{\log \eps}{2\pi} \int_{B} q_1\left(\bx \right) w^*(\bx) d\bx+R^\eps w^*(\by).
$$
In the same way, proposition $\ref{prop:decompN}$ gives, uniformly in $\by \in B$, $R(\cdot,\by) \in W^{2,p}(B) \subset \calC^1(\overline{B})$ with $p<\infty$, so that we can recast $R^\eps w^*$ as 
\begin{eqnarray*}
R^\eps w^*(\by)&=&R(\bx_0, \bx_0+\eps \by) \int_{B} q_1\left(\bx \right) w^*(\bx) d\bx  + \widetilde{R}^\eps w^*(\by) \\
\widetilde{R}^\eps w^*(\by)&=&\int_{B} q_1\left(\bx \right) w^*(\bx) \left( R(\bx_0+\eps \bx , \bx_0+\eps \by)-R(\bx_0, \bx_0+\eps \by)\right) d\bx.
\end{eqnarray*}
The system (\ref{eq:wstar}) can then be reformulated as:
$$
w^*+T w^*+\widetilde{R}^\eps w^*=-T V-\widetilde{R}^\eps V-\left(\frac{\log \eps}{2\pi} + R(\bx_0, \bx_0+\eps \cdot)\right) C^\eps,
$$
where the constant $C^\eps$ is equal to
$$C^\eps=\int_{B}q_1\left(\bx \right) \left(V(\bx_0+\eps \bx)+w^*(\bx) \right)d\bx=\int_{B}q_1\left(\bx \right) v^\eps(\bx_0+\eps \bx) d\bx.
$$
Under assumption \textbf{(H-3)}, we have $C^\eps=0$ so that we just
need to show that
\begin{displaymath}
  \| \widetilde{R}^\eps \|_{\calL(L^2(B))}=o(1)
\end{displaymath}
to apply the Fredholm alternative. Since $R(\cdot,\by)\in
\calC^{1}(\overline{B})$, uniformly in $\by$, we have, for all
$(\bx,\by) \in \overline{B} \times \overline{B}$, that
\begin{math}
   |R(\bx_0+\eps \bx , \bx_0+\eps \by)-R(\bx_0, \bx_0+\eps \by)| \leq C \eps,
\end{math}
which gives $\| \widetilde{R}^\eps
\|_{\calL(L^2(B))}=\calO(\eps)$ and ends the proof of existence when
$d=2$.\\

We now prove decomposition (\ref{decompvtheo}), which is the corner
stone of the proof of theorem \ref{th:asympV}.  Since
$w^\eps(\bx_0+\eps \bx)=w^*(\bx)$ when $\bx \in B$, it suffices to obtain
an expression for $w^*$. We consider first the case $d\geq3$. Defining
$V^\eps(\bx):=V(\bx_0+\eps \bx)$, we recast (\ref{eq:wstar}) as:
$$
w^*+\eps^\eta T w^*=-\eps^\eta T V^\eps-\eps^{d-2+\eta}R^\eps \left(V^\eps+w^* \right).
$$
We expand $V^\eps$ in the first term of the right hand side and set
$w^*=\eps^\eta \Psi^\eps+\eps^{d-2+\eta}\,r^\eps+r_V^\eps$, so as to
obtain:
\begin{eqnarray*} 
\Psi^\eps+\eps^{\eta} T \Psi^\eps &=& -T  \sum_{|j|=0}^{d+1} \frac{\eps^{|j|}}{j!}  \bx^j \partial^j V (\bx_0),\\
r^\eps+\eps^{\eta} T r^\eps &=& -R^\eps \left(V^\eps+w^* \right), 
\hspace{2cm}
r_V^\eps+\eps^{\eta} T r_V^\eps\,\,\,=\,\,\, -T R_V^\eps,
\end{eqnarray*}
where $R_V^\eps$ is the remainder of the Taylor expansion of $V^\eps\in
\calC^\infty(\overline{B})$ of order $d+2$.  Writing
$\Psi^\eps(\bx):=\sum_{|j|=0}^{d+1}\frac{\eps^{|j|}}{j!}  \partial^j
V(\bx_0) \phi^\eta_j(\bx)$, with
$$
\phi^\eta_j(\bx)+\eps^{\eta} T \phi^\eta_j(\bx) = -T \bx^j,
$$
and following the preceding proof of existence when $d\geq 3$, we
verify that $r^\eps_V \in H^1(B)$ with a norm bounded by $C
\eps^{d+2}$ and that $r^\eps$ and $\phi^\eta_j$ are uniquely defined
in $H^1(B)$. Also, examining $\|R^\eps \|_{\calL(L^2(B))}$ as in the
proof of existence, we find that $r^\eps$ is bounded in $L^2(B)$
independently of $\eps$ when $d =3$, is $\calO(\eps^{-\alpha})$ for
any $\alpha>0$ when $d=4$ and $\calO(\eps^{-1})$ when $d=5$. When $q_0
\equiv 0$, $r^\eps$ is bounded in $H^1(B)$ independently of $\eps$
since $\|R^\eps \|_{\calL(H^1(B))}$ is uniformly bounded. We thus
obtain the expression (\ref{decompvtheo}) announced in the proposition
for $d \geq 3$.  When $d=2$, the equation for $r^\eps$ has to be
replaced by
\begin{eqnarray*}
r^\eps+\eps^{\eta} T r^\eps &=& -R^\eps \left(V^\eps+w^* \right)+\frac{\log \eps}{2\pi}\int_{B}q_1\left(\bx \right) v^\eps(\bx_0+\eps \bx) d\bx,\\
&=&-\widetilde{R}^\eps  \left(V^\eps+w^* \right)-\left(\frac{\log \eps}{2\pi} + R(\bx_0, \bx_0+\eps \cdot)\right) C^\eps,
\end{eqnarray*}
where $C^\eps$ is the same constant as before. When $\eta>0$, we
verify that $r^\eps \in H^1(B)$ with a norm of order $\log
\eps$. When $\eta=0$, assumption \textbf{(H-3)} implies $C^\eps=0$.
Since $\widetilde{R}^\eps$ is $\calO(\eps)$ in
$\calL(L^2(B),H^1(B))$, we deduce that $r^\eps$ is $\calO(\eps)$ in
$H^1(B)$ since $V$ is uniformly bounded in $B$ and $w^*$ is bounded in
$L^2(B)$ according to (\ref{estimwstar}).
\end{proofof}
\bigskip

\begin{proofof} {\em Theorem \ref{th:asympV}.}
  We express $v^\eps$ in terms of $V$ and the Green function
  $N$, to obtain, \textit{a.e.} in $\Omega$:
\begin{equation} \label{eq:vproof}
v^\eps(\by)=V(\by)-\eps^{d-2+\eta}\int_B q_1(\bx)\left(V(\bx_0+\eps \bx)+w^\eps(\bx_0+\eps \bx) \right) N(\bx_0+\eps \bx, \by) d\bx.
\end{equation}
Taking the trace of (\ref{eq:vproof}) on $\partial \Omega$, which is
well defined in $L^2(\partial \Omega)$ and thus almost everywhere,
replacing $w^\eps(\bx_0+\eps \bx)$ by the expression in
(\ref{decompvtheo}) and Taylor expanding both $V$ and $N$ according to
(\ref{decompNhelm}), lead to the result.
\end{proofof} 
\bigskip

\begin{proofof}{\em Proposition \ref{prop:equiv}.}
  The outline of the proof is as follows: starting from the asymptotic
  expansion for $v^\eps$ in theorem \ref{th:asympV}, our aim is to
  recover that of $u^\eps$ in theorem \ref{th:asymp} and the
  expression of the polarization tensor $M$. This is done in several
  steps. First, we verify that assumptions $\textbf{(H-1)}$,
  $\textbf{(H-2)}$ and $\textbf{(H-3)}$ are satisfied for the
  particular form (\ref{eq:defq1}) of the potential $q_1$. In a second
  step, we show that the term $f^\eps$ in the expansion of $v^\eps$ is
  of order $\calO(\eps^4)$ so that $\eps^{2(d-2)} f^\eps$ is
  $\calO(\eps^{2d})$ and can be treated as a remainder. Then, we show
  in (\ref{eq:zero1})--(\ref{eq:zero2}) that the two first-order terms
  in the expansion of $v^\eps$ are actually of order
  $\calO(\eps^{2d})$ so that they can be neglected and the expansions
  for $v^\eps$ and $u^\eps$ have the same leading order
  $\calO(\eps^{d})$. Finally, using the particular form of the
  potential $q_1$, we perform some transformations in the polarization
  tensors $Q$ and $Q^\eta$ for $\eta=0$ leading to the expression of
  the polarization tensor $M$ in theorem \ref{th:asymp}.
  
  We will need the following lemma, which is one of the main
  ingredients to show the equivalence of the tensors:
\begin{lemma} \label{integpart}
  Assume $v \in H^1(B)$ verifies in the distribution sense,
\begin{equation} \label{disv}
-\Delta v+q_1v=h \quad \textrm{in } \calD'(B),\qquad
  q_1(\bx)=\frac{\Delta \sqrt{D_0+D_1(\bx)}}{\sqrt{D_0+D_1(\bx)}},\qquad
  h\in L^2(B).
\end{equation} 
Then, for all $\varphi \in H^1(B)$ and harmonic in $B$, we have:
$$
\int_B q_1(\bx)\, v(\bx) \varphi(\bx)d\bx
=\frac{1}{\sqrt{D_0}} \int_B D_1(\bx) \nabla \Big( \frac{v(\bx)}{\sqrt{D_0+D_1(\bx)}} \Big) \cdot \nabla  \varphi(\bx)d\bx- \int_{B} h \varphi d\bx. 
$$
\end{lemma}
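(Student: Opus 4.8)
The plan is to route everything through the Liouville change of variables that the special form of $q_1$ is designed around. Write $D:=D_0+D_1$; by hypothesis $\sqrt D\in\calC^2(\overline B)$ and $\sqrt D\geq c>0$, so $u:=v/\sqrt D\in H^1(B)$. Since $q_1\in L^\infty(B)$, the distributional equation gives $\Delta v=q_1v-h\in L^2(B)$, hence $v\in H^2_{\mathrm{loc}}(B)$ and $u\in H^2_{\mathrm{loc}}(B)$, and the normal traces $\frac{\partial v}{\partial\bn}$, $\frac{\partial u}{\partial\bn}$ on $\partial B$ are well defined in $H^{-\frac12}(\partial B)$. A direct differentiation (the Liouville identity $-\Delta(\sqrt D\,u)+\tfrac{\Delta\sqrt D}{\sqrt D}\sqrt D\,u=-\tfrac1{\sqrt D}\nabla\cdot(D\nabla u)$) then shows that $u$ solves $\nabla\cdot(D\nabla u)=-\sqrt D\,h$ in $\calD'(B)$. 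I would also record two structural facts at the outset: since $\mathrm{supp}\,D_1\subset B$, both $D_1$ and $\Delta\sqrt D=\Delta(\sqrt D-\sqrt{D_0})$ are compactly supported in $B$; and near $\partial B$ one has $D\equiv D_0$, so $\sqrt{D_0}\,u=v$ and $D_0\,\frac{\partial u}{\partial\bn}=\sqrt{D_0}\,\frac{\partial v}{\partial\bn}$ as elements of $H^{\frac12}(\partial B)$, $H^{-\frac12}(\partial B)$.

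With this in hand the identity comes out of two Green formulas. First, substitute $q_1v=h+\Delta v$ and integrate $\int_B\varphi\,\Delta v\,d\bx$ by parts, using $\int_B\nabla v\cdot\nabla\varphi\,d\bx=\int_{\partial B}v\,\frac{\partial\varphi}{\partial\bn}\,d\sigma$ (legitimate because $\varphi$ is harmonic), to get
\[
\int_B q_1v\,\varphi\,d\bx=\int_B h\varphi\,d\bx+\int_{\partial B}\Big(\varphi\,\frac{\partial v}{\partial\bn}-v\,\frac{\partial\varphi}{\partial\bn}\Big)\,d\sigma .
\]
Second, evaluate the boundary integral by running Green's second identity for the operator $\nabla\cdot(D\nabla\,\cdot\,)$ on the pair $(u,\varphi)$: since $\nabla\cdot(D\nabla u)=-\sqrt D\,h$ and $\nabla\cdot(D\nabla\varphi)=\nabla D_1\cdot\nabla\varphi$ (again $\Delta\varphi=0$, $\nabla D=\nabla D_1$), and since $D\equiv D_0$ near $\partial B$ so that the boundary term carries $D_0$ and reduces via $\sqrt{D_0}\,u=v$, $D_0\,\frac{\partial u}{\partial\bn}=\sqrt{D_0}\,\frac{\partial v}{\partial\bn}$, one obtains
\[
\sqrt{D_0}\int_{\partial B}\Big(\varphi\,\frac{\partial v}{\partial\bn}-v\,\frac{\partial\varphi}{\partial\bn}\Big)\,d\sigma
=-\int_B\sqrt D\,h\,\varphi\,d\bx-\int_B u\,\nabla D_1\cdot\nabla\varphi\,d\bx .
\]
A last integration by parts of $-\int_B u\,\nabla D_1\cdot\nabla\varphi\,d\bx$ produces no boundary term (because $D_1$ vanishes on $\partial B$) and, using $\Delta\varphi=0$, turns that term into $\int_B D_1\,\nabla u\cdot\nabla\varphi\,d\bx$. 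Combining the two displays, the boundary contributions cancel and after collecting the remaining $h$–terms one is left with the asserted right-hand side $\tfrac1{\sqrt{D_0}}\int_B D_1\,\nabla u\cdot\nabla\varphi\,d\bx-\int_B h\varphi\,d\bx$.

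The step I expect to be the real work is the boundary-term bookkeeping: each Green formula must be justified at the regularity available ($v\in H^1$ with $\Delta v\in L^2$; $u\in H^1$ with $\nabla\cdot(D\nabla u)\in L^2$; $\varphi\in H^1$ harmonic), interpreting the normal traces as the duality pairing $H^{-\frac12}(\partial B)\times H^{\frac12}(\partial B)$, and the two features that make the cancellations happen must be used in an essential way — that $\varphi$ is harmonic (so that $\Delta\varphi$ and $\nabla\cdot(D\nabla\varphi)$ collapse, leaving only $\nabla D_1\cdot\nabla\varphi$), and that $D_1$ is supported compactly inside $B$ (so $D\equiv D_0$ near $\partial B$, the two boundary integrals are genuinely comparable, and every integration by parts against $\Delta\sqrt D$ or $\nabla D_1$ is boundary-term free). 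An equivalent, slightly more symmetric route avoids the substitution $q_1v=h+\Delta v$: write instead $q_1 v=u\,\Delta\sqrt D$, integrate by parts twice against the compactly supported $\Delta\sqrt D=\Delta(\sqrt D-\sqrt{D_0})$ so that $\int_B q_1v\,\varphi=\int_B(\sqrt D-\sqrt{D_0})\big(\varphi\Delta u+2\nabla u\cdot\nabla\varphi\big)\,d\bx$, then eliminate $\Delta u$ through $\nabla\cdot(D\nabla u)=-\sqrt D\,h$ and use $\nabla v=u\nabla\sqrt D+\sqrt D\,\nabla u$ to reorganise the $\nabla\sqrt D$ contributions; the resulting bookkeeping is the same.
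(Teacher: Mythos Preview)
Your ``slightly more symmetric route'' at the end is exactly the paper's proof: write $q_1v=u\,\Delta\sqrt{D}$, integrate by parts twice against the compactly supported $\sqrt{D}-\sqrt{D_0}$ to reach $\int_B(\sqrt{D}-\sqrt{D_0})(\varphi\Delta u+2\nabla u\cdot\nabla\varphi)$, and then eliminate $\Delta u$ via the Liouville equation together with a final boundary identity $\int_{\partial B}\partial_{\bn}v\,\varphi=\tfrac{1}{\sqrt{D_0}}\int_B D\nabla u\cdot\nabla\varphi$. Your primary route---substitute $q_1v=h+\Delta v$, use Green's second identity for $\Delta$, then Green's second identity for $\nabla\cdot(D\nabla\,\cdot\,)$ on the pair $(u,\varphi)$---is a clean variant using the same structural facts (harmonicity of $\varphi$, compact support of $D_1$, $D\equiv D_0$ near $\partial B$); it trades the paper's two integrations by parts against $\sqrt{D}-\sqrt{D_0}$ for a single application of the $D$-weighted Green identity, which is arguably more transparent.

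One point deserves care. If you actually carry out the $h$-bookkeeping in your first route you obtain
\[
\int_B q_1v\,\varphi\,d\bx=\frac{1}{\sqrt{D_0}}\int_B D_1\nabla u\cdot\nabla\varphi\,d\bx+\int_B h\,\varphi\Big(1-\frac{\sqrt{D}}{\sqrt{D_0}}\Big)d\bx,
\]
not the stated $-\int_B h\varphi\,d\bx$; a quick sanity check is $D_1\equiv 0$, where the lemma as written would force $\int_B h\varphi=0$ for every harmonic $\varphi$, which is false. The paper's proof has two compensating slips here: the Liouville identity should read $\nabla\cdot(D\nabla u)=-\sqrt{D}\,h$ (your sign), and the concluding boundary identity $\int_{\partial B}\partial_{\bn}v\,\varphi=\tfrac{1}{\sqrt{D_0}}\int_B D\nabla u\cdot\nabla\varphi$ tacitly drops a term $\tfrac{1}{\sqrt{D_0}}\int_B\sqrt{D}\,h\,\varphi$. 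None of this affects the downstream applications, since the lemma is invoked only with $h=0$ or with $h$ of order $\eps^{d+2}$, but your sentence ``after collecting the remaining $h$-terms one is left with the asserted right-hand side'' is not literally true.
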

\begin{proof} 
  Define $D(\bx):=D_0+D_1(\bx)$. Note that $\partial_\bn D_1=0$ on
  $\partial B$ since $D_1 \in \calC^2(\Omega)$ and $D_1$ is supported
  in $B$. Hence, two successive integrations by parts yield:
  \begin{displaymath}
    \int_B \frac{\Delta \sqrt{D(\bx)}}{\sqrt{D(\bx)}} v(\bx) \varphi(\bx)d\bx
=-\int_{B} \nabla \sqrt{D} \cdot \nabla \Big(\frac{v \varphi} {\sqrt{D}} \Big) d\bx
=\int_{B} \Big(\sqrt{D}-\sqrt{D_0}\Big) \Delta \Big(\frac{v \varphi} {\sqrt{D}} \Big) d\bx.
  \end{displaymath}  
  The above expression makes sense since $\varphi$ is harmonic and
  $\Delta v \in L^2(B)$ because of (\ref{disv}). Starting from
  (\ref{disv}), we verify after some algebra that $v$ solves
\begin{equation} \label{eq:vdiff}
\nabla \cdot D \nabla \Big(\frac{v} {\sqrt{D}}\Big)=\sqrt{D}h, \quad \textrm{in } \calD'(B),
\end{equation}
which, since $D>0$ in $\Rm^d$, is equivalent to: 
$$
2 \, \nabla \sqrt{D} \cdot \nabla \Big(\frac{v}{\sqrt{D}} \Big)+\sqrt{D} \Delta \Big(\frac{v} {\sqrt{D}}\Big)=h.
$$
Since $D=D_0$ on $\partial B$ and is constant, it follows from the
above equation and another integration by parts that:
\begin{equation} \label{eq:intpart}
\DST 2 \int_{\partial B} \frac{\partial v}{\partial \bn } \varphi d\sigma -\int_{B} \sqrt{D} \Delta \Big(\frac{v}{\sqrt{D}} \Big) \varphi d\bx \DST -2 \int_{B} \sqrt{D}   \nabla \Big(\frac{v}{\sqrt{D}} \Big) \cdot \nabla \varphi d\bx=
 \int_{B} h \varphi d\bx.
\end{equation}
Here, $\sigma$ is the surface measure on $\partial B$ and the boundary
term above has to be understood as the $H^{-\frac12}(\partial
B)$--$H^{\frac12}(\partial B)$ duality product since $\partial_\bn v
\in H^{-\frac12}(\partial B)$ because $v \in H^1(B)$ and $\Delta v \in
L^2(B)$ thanks to (\ref{disv}). Using the fact that $\varphi$ is
harmonic in $B$, that $D=D_0$ on $\partial B$, and using equation
(\ref{eq:intpart}), we find:
\begin{eqnarray*}
&&\int_{B} \Big(\sqrt{D}-\sqrt{D_0}\Big) \Delta \Big(\frac{v \varphi} {\sqrt{D}} \Big) d\bx\\
&=&\int_{B} \Big(\sqrt{D}-\sqrt{D_0}\Big) \Delta \Big(\frac{v} {\sqrt{D}} \Big) \varphi \,d\bx
+2 \int_{B} \Big(\sqrt{D}-\sqrt{D_0}\Big) \nabla \Big(\frac{v} {\sqrt{D}} \Big) \cdot \nabla \varphi \,d\bx,\\
&=&-\int_{B} \!\!\! \sqrt{D_0}\,\Delta \Big(\frac{v} {\sqrt{D}} \Big) \varphi \,d\bx
-2 \int_{B}  \!\!\! \sqrt{D_0}\,\nabla \Big(\frac{v} {\sqrt{D}} \Big) \cdot \nabla \varphi \,d\bx
+2 \int_{\partial B} \frac{\partial v}{\partial \bn } \varphi d\sigma- \int_{B}  \!\!\! h \varphi d\bx,\\
&=&-\int_{B} \sqrt{D_0}\,\nabla \Big(\frac{v} {\sqrt{D}} \Big) \cdot \nabla \varphi \,d\bx
+\int_{\partial B} \frac{\partial v}{\partial \bn } \varphi d\sigma- \int_{B} h \varphi d\bx.
\end{eqnarray*}
To conclude, we just need to remark that, thanks to (\ref{eq:vdiff}),
$$
\int_{\partial B} \frac{\partial v}{\partial \bn } \varphi d\sigma=\frac{1}{\sqrt{D_0}}\int_{\partial B} D \frac{\partial }{\partial \bn } \Big( \frac{v} {\sqrt{D}}\Big) \varphi d\sigma
=\frac{1}{\sqrt{D_0}} \int_{B} D \nabla \Big(\frac{v} {\sqrt{D}} \Big) \cdot \nabla \varphi d\bx.
$$
\end{proof}
Coming back to the proof of proposition \ref{prop:equiv}, we first
verify that assumptions $\textbf{(H-1)}$, $\textbf{(H-2)}$ and
$\textbf{(H-3)}$ are satisfied. Since $q_0=0$, $\textbf{(H-1)}$
trivially holds because of the compatibility conditions
(\ref{eq:norma}). The same is true for $\textbf{(H-3)}$. Regarding
$\textbf{(H-2)}$, we have to show that if
\begin{equation} \label{injec}
\varphi+T\varphi=0, \qquad \forall \varphi \in L^2(B),
\end{equation}
then $\varphi=0$. To this aim, we first remark that $T$ maps $L^2(B)$
to $H^1(B)$, so that every $\varphi$ verifying (\ref{injec}) belongs
to $H^1(B)$. Now, $\varphi$ can be extended to $\Rm^d$ to a function
$\varphi^* \in H^1_{\textrm{loc}}(\Rm^d)$ by the relation:
$$
 \left\{ 
\begin{array}{l}
\DST \varphi^*(\by)=\varphi \left(\by \right), \qquad \by \in B,\\
\DST \varphi^*(\by)= -T\varphi(\by), \qquad \textrm{otherwise}.\\
\end{array} \right.
$$
Moreover, when (\ref{injec}) holds, then so does the following in
the distributional sense:
\begin{equation} \label{distribvarphi}
-\Delta \varphi^* +q_1\varphi^*=0, \quad \textrm{in } \calD'(\Omega'),
\end{equation}
for any bounded set $\Omega' \subset \Rm^d$. Consider $\by \in
\Rm^d\backslash \overline{B}$. Then $\Gamma(\bx-\by)$ is harmonic for
$\bx \in B$. We then apply lemma \ref{integpart} with $h=0$ to find,
uniformly in $\by$:
\begin{eqnarray*}
\varphi^*(\by)&=&-\int_{B} q_1\left(\bx \right) \varphi^*(\bx) \Gamma(\bx-\by) d\by,\\
&=&-\frac{1}{\sqrt{D_0}} \int_B D_1(\bx) \nabla \left( \frac{\varphi^*(\bx)}{\sqrt{D_0+D_1(\bx)}} \right) \cdot \nabla \Gamma(\bx-\by)d\bx.
\end{eqnarray*}
We thus deduce from the above equation for $d\geq 2$ the following
behavior at infinity:
\begin{equation} \label{phiinfty}
\varphi^*(\by)=\calO(|\by|^{1-d}) , \qquad  \nabla \varphi^*(\by)=\calO(|\by|^{-d}).
\end{equation}
Besides, equation (\ref{distribvarphi}) can be reformulated as:
\begin{equation} \label{diffvarphi}
\nabla \cdot (D_0+D_1) \nabla  \left(\frac{\varphi^*}{\sqrt{D_0+D_1}} \right)=0, \quad \textrm{in } \calD'(\Omega').
\end{equation} 
After multiplication by $\overline{\varphi^*}(D_0+D_1)^{-\frac12}$ in
$H^1_{\rm loc}(\Rm^d)$, and an integration on the sphere $B_R \supset
\supset B$ of radius $R$ and boundary $S_R$, we find:
$$
\int_{B_R} (D_0+D_1)\left| \nabla  \left(\frac{\varphi^*}{\sqrt{D_0+D_1}} \right)\right|^2 d\bx-\int_{S_R} \frac{\partial \varphi^*}{\partial \bn} \overline{ \varphi^*} d\sigma=0.
$$
Letting $R \to \infty$ leads, together with (\ref{phiinfty}), to
$\varphi=0$ so that assumption $\textbf{(H-2)}$ is satisfied.

We now show the equivalence of the tensors. First, the term $f^\eps$
given in the expansion of theorem \ref{th:asympV} is of order
$\calO(\eps^4)$, which is not obvious at first sight. Consequently,
$\eps^{2(d-2)}f(\eps)$ is of order $\calO(\eps^{2d})$ and can treated
as a remainder in the expansion. To prove this, we apply lemma
\ref{integpart} to $f^\eps$ and need to estimate $r^\eps$. Let us
recall the equation verified by $r^\eps \in H^1(B)$ given in
proposition \ref{prop:asympV}:
\begin{equation} \label{rint}
r^\eps(\by)+T r^\eps(\by) = \int_{B} q_1\left(\bx \right) v^\eps(\bx_0+\eps \bx) R(\bx_0+\eps \bx, \bx_0+\eps \by)d\bx.
\end{equation}
When $d=2$, we use the fact that assumption $\textbf{(H-3)}$ is
satisfied since $q_0=0$ so that the term involving $\log \eps$ in the
equation of proposition \ref{prop:asympV} vanishes. Since $v^\eps$
verifies (\ref{disv}) with $h=0$, and $R$ verifies $(\ref{eq:R1})$
with $q_0=0$ so that we have $\Delta_\by R(\bx,\by)=\Delta_\by
R(\by,\bx)=0$ since $R$ is symmetric in its arguments and is thus
harmonic, we apply lemma \ref{integpart} to find:
$$
\begin{array}{l}
\DST \int_{B} q_1\left(\bx \right) v^\eps(\bx_0+\eps \bx) R(\bx_0+\eps \bx, \bx_0+\eps \by)d\bx\\
\DST \hspace{2cm}=\frac{\eps}{\sqrt{D_0}}\int_{B} D_1(\bx) \nabla \left(\frac{v^\eps(\bx_0+\eps \bx)}{\sqrt{D_0+D_1(\bx)}}  \right)\cdot \nabla_\bx R(\bx_0+\eps \bx, \bx_0+\eps \by) d\bx.
\end{array}
$$
Moreover, we show that
\begin{equation} \label{vO1}
\left\| \nabla \left(\frac{v^\eps(\bx_0+\eps \cdot)}{\sqrt{D_0+D_1}} \right) \right\|_{L^2(B)} = \calO(\eps),
\end{equation}
so that the left hand side of $(\ref{rint})$ is of order
$\calO(\eps^2)$.  This is obtained by proving that the leading term in
the above expression vanishes. That is to say, thanks to the
decomposition given theorem \ref{th:asympV},
\begin{math}
  v^\eps(\bx_0+\eps \by)=V(\bx_0+\eps
  \by)+\sum_{|j|=0}^{d+1}\frac{\eps^{|j|}}{j!} \partial^j V(\bx_0)
  \phi_j(\by)+\eps^{d-2}\,r^\eps(\by)+\calO(\eps^{d+2}), \,\by \textit{
    a.e.} \textrm{ in } B,
\end{math}
that
\begin{equation} \label{firstorder}
\nabla \Big(\frac{V(\bx_0)(1+\phi_0(\bx))}{\sqrt{D_0+D_1(\bx)}} \Big)=0.
\end{equation}
The argument is very similar to that in the verification of assumption
$\textbf{(H-2)}$ and so we just sketch the proof. Since $\phi_0$
verifies
\begin{math}
  \phi_0+ T \phi_0 = -T 1,
\end{math}
it can be extended to $\Rm^d$ to $\phi_0^* \in
H^1_\textrm{loc}(\Rm^d)$ which admits the behavior at infinity given
in (\ref{phiinfty}). We also have, for any bounded set $\Omega'
\subset \Rm^d$,
\begin{equation} \label{phi+1}
-\Delta (\phi_0^*+1) +q_1 (\phi_0^*+1)=0, \quad \textrm{in } \calD'(\Omega'),
\end{equation}
so that, still denoting by $B_R$ the sphere of radius $R$, 
$$
\int_{B_R} (D_0+D_1)\left| \nabla  \Big(\frac{\phi_0^*+1}{\sqrt{D_0+D_1}} \Big)\right|^2 d\bx-\int_{S_R} \frac{\partial \phi^*_0}{\partial \bn} (\overline{ \varphi_0^*}+1) d\sigma=0.
$$
Sending $R$ to infinity then gives the result thanks to the decay
of $\nabla \phi_0^*$ at infinity.  Owing to this result, the
decomposition (\ref{decompvtheo}), the fact that $\phi_j$ and $r^\eps$
belong to $H^1(B)$, and $r^\eps$ is at least an $\calO(\eps)$ when
$d=2$ as mentioned in theorem (\ref{th:asympV}), we get that
(\ref{vO1}) holds.  Furthermore, using again the fact that $R$ is
harmonic, we verify from (\ref{rint}) that $r^\eps$ solves in the
distribution sense:
$$ 
-\Delta r^\eps +q_1 r^\eps=0, \quad \textrm{in } \calD'(B). 
$$
We cannot apply lemma \ref{integpart} directly to (\ref{rint})
since for $(\bx,\by) \in B \times B$, we have
$$
-\Delta_\bx \Gamma(\bx-\by)=\delta(\bx-\by), \quad \textrm{in } \calD'(B),
$$
and $\Gamma$ is not harmonic. Nevertheless, the lemma can easily be
adapted to this special case so that, $\by$ \textit{a.e.} in $B$, we have
\begin{eqnarray*}
\int_B q_1(\bx)\, r^\eps(\bx) \Gamma(\bx-\by)d\bx
&=&\frac{1}{\sqrt{D_0}} \int_B D_1(\bx) \nabla \Big( \frac{r^\eps(\bx)}{\sqrt{D_0+D_1(\bx)}} \Big) \cdot \nabla \Gamma(\bx-\by)  d\bx\\
&&-\frac{\sqrt{D_0+D_1(\by)}-\sqrt{D_0}}{\sqrt{D_0+D_1(\by)}} r^\eps(\by).
\end{eqnarray*}
Plugging the above expression into (\ref{rint}), we finally find the
following equation for $r^\eps \in H^1(B)$, $\by$ \textit{a.e.} in
$B$:
$$
\frac{r^\eps(\by)}{\sqrt{D_0+D_1(\by)}}+\frac{1}{D_0} \int_B D_1(\bx) \nabla \Big( \frac{r^\eps(\bx)}{\sqrt{D_0+D_1(\bx)}} \Big) \cdot \nabla \Gamma(\bx-\by)  d\bx
$$
$$
=\frac{\eps }{D_0} \int_B D_1(\bx) \nabla \Big( \frac{v^\eps(\bx)}{\sqrt{D_0+D_1(\bx)}} \Big) \cdot \nabla_\bx R(\bx_0+\eps \bx, \bx_0+\eps \by)   d\bx.
$$
Identifying the right hand side of the latter equation with
$S^\eps(\eps \bx)$ and $(D_0+D_1)^{-\frac12}r^\eps$ with
$r^\eps_1(\bx)+S^\eps(\eps \bx)$ in the proof of theorem
\ref{th:asymp}, we see that $(D_0+D_1)^{-\frac12} r^\eps$ and
$r^\eps_1(\bx)+S^\eps(\eps \bx)$ satisfy similar equations so that the
same technique yield
$$
\left\| \nabla \Big(\frac{r^\eps}{\sqrt{D_0+D_1}} \Big) \right\|_{L^2(B)} \leq C\eps^2 \left\| \nabla \Big(\frac{v^\eps}{\sqrt{D_0+D_1}} \Big) \right\|_{L^2(B)} \left\| \nabla_\bx  \nabla_\by R \right\|_{L^\infty(B_0 \times B_0)}.
$$
$\mbox{}$From proposition \ref{prop:decompN}, $R \in \calC^\infty(\Omega
\times \Omega)$. Together with (\ref{vO1}), this finally gives that:
\begin{equation} \label{rO3}
\left\| \nabla \Big(\frac{r^\eps}{\sqrt{D_0+D_1}} \Big) \right\|_{L^2(B)} = \calO(\eps^3).
\end{equation}
We conclude by applying once again lemma \ref{integpart} to obtain  
\begin{eqnarray*}
\| f^\eps\|_{L^2(\partial \Omega)}&=&  \left\| \int_B q_1(\bx) r^\eps (\bx) N(\bx_0+\eps \bx, \cdot) d\bx \right\|_{L^2(\partial \Omega)}\\
&=&\frac{\eps}{\sqrt{D_0}} \left\|\int_{B} D_1(\bx) \nabla \Big(\frac{r^\eps(\bx)}{\sqrt{D_0+D_1(\bx)}}  \Big)\cdot \nabla_\bx N(\bx_0+\eps \bx, \cdot) d\bx \right\|_{L^2(\partial \Omega)}\!\!\!
=\calO(\eps^4),
\end{eqnarray*}
thanks to (\ref{rO3}).

We now prove (\ref{eq:zero1}) and (\ref{eq:zero2}) so that the leading
order in the expansion of theorem \ref{th:asymp} is $\calO(\eps^d)$ as
in the case of the diffusion equation. We remark that, for $\eta=0$,
\begin{eqnarray*}
\sum_{j=0}^{d+1} \frac{\eps^{|j|}}{j!} \partial^j V (\bx_0)\Big(Q_{0j}+Q_{0j}^0\Big)&=&\sum_{j=0}^{d+1} \frac{\eps^{|j|}}{j!} \partial^j V (\bx_0)\int_B q_1(\bx) 
\Big( \phi^\eta_j(\bx)+\bx^j\Big) d\bx,\\
&=&\int_B q_1(\bx) 
\Big( \Psi^\eps(\bx) + V(\bx_0+\eps \bx)-R_V^\eps(\bx)\Big) d\bx,
\end{eqnarray*}
where $\Psi^\eps$ is given in the theorem and $R_V^\eps$ is the
remainder of the Taylor expansion of $V(\bx_0+\eps \bx)$ at the order
$d+2$ and is thus of order $\calO(\eps^{d+2})$. In order to apply
lemma \ref{integpart}, we verify from (\ref{eqphith2}) that $\Psi^\eps
\in H^1(B)$ solves,
$$
-\Delta \Psi^\eps +q_1 \Psi^\eps=-q_1 \Big(V(\bx_0+\eps \bx)-R_V^\eps(\bx) \Big)\quad \textrm{in } \calD'(B).
$$
Setting $v(\bx)=\Psi^\eps(\bx)+V(\bx_0+\eps \bx)$, $h=q_1 R_V^\eps $ and $\varphi=1$ in lemma \ref{integpart} yields (\ref{eq:zero1}). Regarding (\ref{eq:zero2}), we write, for $\by \in \partial \Omega$,
$$
\sum_{i=0}^{d+1} \frac{\eps^{|i|}}{i!} \partial^i N(\bx_0,\by)\Big(Q_{i0}+Q_{i0}^0\Big)=\int_B q_1(\bx) 
\Big( 1+\phi_0(\bx)\Big) (N(\bx_0+\eps \bx,\by)-R_N^\eps(\bx,\by)) d\bx,
$$
where $R_N^\eps$ is the remainder of the $d+2$ order Taylor
expansion of $N(\bx_0+\eps \bx,\by)$ with respect to $\bx$ and is thus
of order $\calO(\eps^{d+2})$. Since $N(\bx_0+\eps \bx,\by)$ is
harmonic when $\bx\in B$ and $\by\in\partial \Omega$, we apply lemma
\ref{integpart} thanks to (\ref{phi+1}) to find:
\begin{eqnarray*}
\sum_{i=0}^{d+1} \frac{\eps^{|i|}}{i!} \partial^i N(\bx_0,\by)\Big(Q_{i0}+Q_{i0}^0\Big)&=&\frac{1}{\sqrt{D_0}}\int_B D_1\nabla \Big(\frac{(1+\phi_0(\bx))}{\sqrt{D_0+D_1(\bx)}} \Big) \cdot \nabla_\bx N(\bx_0+\eps \bx,\by)d\bx\\
&&+\calO(\eps^{d+2}) =\calO(\eps^{d+2}),
\end{eqnarray*}
since the above integral vanishes thanks to (\ref{firstorder}).  

At this point of the proof, we have thus shown that $v^\eps$
satisfies, \textit{a.e.} on $\partial \Omega$, that
$$ 
\left. v^\eps(\by) \right|_{\partial \Omega}=\left. V(\by)\right|_{\partial \Omega}-\sum_{|j|=1}^{d+1} \sum_{|i|=1}^{d+1} \frac{\eps^{d-2+|i|+|j|}}{i!j!}  \left(Q_{ij}+Q^0_{ij} \right) \partial^j V (\bx_0) \left. \partial^i N(\bx_0,\by)\right|_{\partial \Omega}
+\calO(\eps^{2d}).
$$
Setting $v^\eps(\by):= u^\eps(\by)
\sqrt{D_0+D_1(\frac{\by-\bx_0}{\eps})}$, $V:=\sqrt{D_0} U$, we verify
that $u^\eps$ and $U$ are solutions to (\ref{eq:diff}) and
(\ref{eq:diffD0}), respectively, with the boundary term $g$ multiplied
by $\sqrt{D_0}$. It thus remains to show that (\ref{equiv1}) and
(\ref{equiv2}) hold to recover the asymptotic expansion for $u^\eps$
of theorem \ref{th:asymp}.  Since $\bx^j+\phi_j$ satisfies
(\ref{disv}) when $|j|=1$ and $\bx^i$ is harmonic when $|i|=1$, we
have, for $|i|=|j|=1$:
\begin{eqnarray}
Q_{ij}+Q_{ij}^0&=&\int_B q_1(\bx) \Big( \bx^j+\phi_j(\bx)\Big)\bx^i d\bx, \nonumber\\
&=&\frac{1}{\sqrt{D_0}}\int_B D_1\nabla \Big(\frac{\bx^j+\phi_j(\bx)}{\sqrt{D_0+D_1(\bx)}} \Big) \cdot \nabla \bx^id\bx \label{eqQ1}.
\end{eqnarray}
We introduce the following
extension to $\phi_j$ on $\Rm^d$:
$$
 \left\{ 
\begin{array}{l}
\DST \phi_j^*(\by)=\phi_j \Big(\by \Big), \qquad \by \in B,\\
\DST \phi_j^*(\by)= -T\phi_j(\by)-T\bx^j, \qquad \textrm{otherwise},\\
\end{array} \right. 
$$
which thus satisfies the conditions at infinity in
(\ref{phiinfty}).  We recall that $\phi_{j0}^0$, the function
introduced in theorem \ref{th:asymp} to define the polarization tensor
$M$, is the unique weak solution in the space $H^1_\textrm{loc}(\Rm^d)
\cap \calC^\infty(\Rm^d \backslash \overline{B})$ to the following system posed
in $\Rm^d$:
\begin{eqnarray}
\nabla \cdot \Big(D_0+D_1(\bx) \Big)\nabla \phi_{j0}^0&=&-\nabla \cdot \Big(D_1(\bx) \nabla \bx^j \Big), \label{eqphi1}\\
\phi_{j0}^0(\bx) &=& \calO(|\bx|^{1-d})\quad \textrm{as} \quad |\bx|\to \infty.\label{eqphi2}
\end{eqnarray}
When $|j|=1$, notice that $\phi_{j0}^0$ is given by
$$
 \phi_{j0}^0(\bx)=\Big(\frac{\sqrt{D_0+D_1}}{\sqrt{D_0}}-1\Big)\bx^j\, +\frac{\sqrt{D_0+D_1}}{\sqrt{D_0}}\phi_j^*(\bx), 
 $$
 so that (\ref{equiv1}) is proved using (\ref{eqQ1}). To prove
 (\ref{equiv2}), we need to sum over $i$ and $j$ to be able to use
 lemma \ref{integpart} since $\bx^i$ is not harmonic for $|i|\geq 2$
 and $\bx^j+\phi_j(\bx)$ satisfies (\ref{disv}) with a negligible
 left- hand side $h$ of order $\calO(\eps^{d+2})$ only after
 summation.  We thus write, using the same arguments as for the proof
 of (\ref{eq:zero1}) and (\ref{eq:zero2}), for $\by \in \partial
 \Omega$:
$$
\begin{array}{l}
\hspace{1cm}
\DST \sum_{|j|=1}^{d+1} \sum_{|i|=1}^{d+1} \frac{\eps^{|i|+|j|}}{i!j!}  \Big(Q_{ij}+Q^0_{ij} \Big)\partial^i N(\bx_0,\by) \partial^j V (\bx_0)\\[4mm]
\DST 
=\sum_{|j|=1}^{d+1} \sum_{|i|=1}^{d+1} \frac{\eps^{|i|+|j|}}{i!j!}\partial^i N(\bx_0,\by) \partial^j V (\bx_0) \int_B q_1(\bx) 
\Big( \bx^j+\phi_j(\bx)\Big) \bx^i d\bx,\\[4mm]
\DST 
=\int_B \!q_1(\bx) \big(V(\bx_0+\eps \bx)-R_V^\eps(\bx)+\Psi^\eps(\bx)\big)(N(\bx_0+\eps \bx,\by)-R_N^\eps(\bx,\by)) d\bx +\calO(\eps^{d+2}),\\[4mm]
\DST 
=\frac{\eps}{\sqrt{D_0}}\int_B D_1\nabla \Big(\frac{V(\bx_0+\eps \bx)+\Psi^\eps(\bx)}{\sqrt{D_0+D_1(\bx)}} \Big) \cdot \nabla_\bx N(\bx_0+\eps \bx,\by)d\bx+\calO(\eps^{d+2}),\\[4mm]
\DST  
=\sum_{|j|=1}^{d+1} \sum_{|i|=1}^{d+1}
\frac{\eps^{|i|+|j|}}{i!j!} \partial^i N(\bx_0,\by) \partial^j V (\bx_0) 
 \int_B D_1(\bx) \nabla \Big(\frac{\bx^j+\phi_j(\bx)}{\sqrt{D_0+D_1(\bx)}}  \Big) \! \cdot\! \nabla \bx^i d\bx 
+\calO(\eps^{d+2}).
\end{array}
$$
It remains to relate the latter sum to $M$. For that, let $f_j$ be defined as:
$$
f_{j}(\by)=\Big(\frac{\sqrt{D_0+D_1}}{\sqrt{D_0}}-1\Big)\bx^j\, +\frac{\sqrt{D_0+D_1}}{\sqrt{D_0}}\phi_j^*(\by)- \phi_0^0 (\by ).
$$
Then $f_j$ belongs to $H^1_\textrm{loc}(\Rm^d) \cap
\calC^\infty(\Rm^d \backslash \overline{B})$ by construction and
solves:
\begin{eqnarray} 
\nabla \cdot (D_0+D_1(\bx)) \nabla f_j &=&-\un_B(\bx) \sqrt{D_0+D_1(\bx)}\,\Delta \bx^j, \quad \bx \in \Rm^d, \label{eqf1}\\
f_j(\by)&=&\calO(|\by|^{1-d}) \qquad \textrm{ as }\quad |\by|\to \infty.\label{eqf2}
\end{eqnarray}
Here, $\un_B$ is the characteristic function of the set $B$ and
$\phi_j^*$ is the extension of $\phi_j$ to $\Rm^d$. Note that $f_j=0$
when $|j|=1$ so that we recover the preceding relationship between
$\phi_j^*$ and $\phi_0^0$. To conclude the proof, it suffices to show
that an appropriate linear combination of the terms $f_j$ is of order
$\calO(\eps^{d+2})$. Let:
\begin{displaymath}
  T^\eps_V(\bx):=\sum_{|j|=1}^{d+1} \frac{\eps^{|j|}}{j!}\partial^j V(\bx_0)  \Delta \bx^j,\quad
F^\eps(\bx):=\sum_{|j|=1}^{d+1} \frac{\eps^{|j|}}{j!}\partial^j V(\bx_0)  f_j(\bx),
\end{displaymath}
so that since $\Delta V(\bx_0+\eps \bx)=0$, for all $\bx \in B$, we
have $T^\eps_V(\bx)=\calO(\eps^{d+2})$ uniformly in $B$ and $F^\eps
\in H^1_\textrm{loc}(\Rm^d) \cap \calC^\infty(\Rm^d \backslash
\overline{B})$ solves
\begin{eqnarray*} 
\nabla \cdot (D_0+D_1(\bx)) \nabla F^\eps &=&-\un_B(\bx) \sqrt{D_0+D_1(\bx)}\,T^\eps_V, \quad \bx \in \Rm^d, \\
F^\eps(\by)&=&\calO(|\by|^{1-d}) \qquad \textrm{ as }\quad |\by|\to \infty.
\end{eqnarray*}
The above equation is very similar to (\ref{Fprop}) at the end of
proof of proposition \ref{prop:jump} and a similar analysis yields
$$
\| \nabla F^\eps \|_{L^2(\Rm^d)}=\calO(\eps^{d+2}).
$$
We conclude the proof by calculating that
$$
\begin{array}{l}
\DST \sum_{|j|=1}^{d+1} \sum_{|i|=1}^{d+1} \frac{\eps^{|i|+|j|}}{i!j!} \partial^i N(\bx_0,\by) \partial^j V (\bx_0) \,\int_B D_1(\bx) \nabla \Big(\frac{\bx^j+\phi_j(\bx)}{\sqrt{D_0+D_1(\bx)}}  \Big)\cdot \nabla \bx^i d\bx,\\[4mm]
\DST \hspace{1cm}=\sum_{|j|=1}^{d+1} \sum_{|i|=1}^{d+1} \frac{\eps^{|i|+|j|}}{i!j!} \partial^i N(\bx_0,\by) \partial^j V (\bx_0) \frac{1}{\sqrt{D_0}} \int_B D_1(\bx) \nabla \Big(\bx^j+\psi_j+f_j \Big)\cdot \nabla \bx^i d\bx,\\[4mm]
\DST \hspace{1cm}=\frac{1}{\sqrt{D_0}}\sum_{|j|=1}^{d+1} \sum_{|i|=1}^{d+1} \frac{\eps^{|i|+|j|}}{i!j!} \partial^i N(\bx_0,\by) \partial^j V (\bx_0) M_{ij}+ \calO(\eps^{d+2}).
\end{array}
$$
\end{proofof}  

\subsection{Appendix}
This appendix states several lemmas that were needed in the preceding
analyses.
\begin{lemma} \label{lemappend1} 
  Let $\bF \in (L^2(\Rm^d))^d$ and $D_1 \in W^{1,\infty}(\Rm^d)$
  compactly supported in a bounded domain $B$, and $D_0$ a strictly
  positive constant. Assume moreover that $D_0+D_1(\bx)\geq C_0>0$
  \textit{a.e.} in $\Rm^d$. Then, the following problem (P):
\begin{eqnarray*}
\nabla \cdot \left(D_0+D_1(\bx) \right)\nabla \phi&=&\nabla \cdot \bF \qquad \textrm{ in  } \calD'(\Rm^d),\\
\phi(\bx)&=& \calO(|\bx|^{1-d})\quad \textrm{as} \quad |\bx|\to \infty, 
\end{eqnarray*}
admits unique solution in $H^1_\textrm{loc}(\Rm^d) \cap
\calC^\infty(\Rm^d \backslash \overline{B})$. Moreover, $\phi$
satisfies the estimates, for any bounded set $A \subset \Rm^d$,
\begin{equation} \label{estimappendphi}
\|\nabla \phi \|_{L^2(\Rm^d)} \leq C_0^{-1}\| \bF \|_{(L^2(B))^d},
 \quad \|\phi \|_{L^2(A)} \leq C \| \bF \|_{(L^2(B))^d} \left(1+\|D_1\|_{L^\infty(\Rm^d)} \right),
\end{equation}
and is the unique solution, $\textit{a.e.}$ on every bounded set of
$\Rm^d$, to the integral equation
\begin{eqnarray}\label{integappend}
D_0\, \phi(\by)&=& -\int_{B} D_1\left(\bx \right)\nabla \phi(\bx) \cdot   \nabla_\bx \Gamma(\bx-\by) d\bx+\int_{B} \bF(\bx) \cdot   \nabla_\bx \Gamma(\bx-\by) d\bx. 
\end{eqnarray}
\end{lemma}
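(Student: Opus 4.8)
The plan is to construct $\phi$ by a variational argument in a homogeneous Sobolev space, to identify it with a Newtonian-potential representation from which \eqref{integappend}, the decay, and the pointwise and local bounds are read off, and finally to prove uniqueness in the stated class by an energy identity on large balls. Recall that $\bF$ and $D_1$ are supported in $\overline{B}$, so that, setting $\bG:=\bF-D_1\nabla\phi$, which is an $L^2$ vector field supported in $\overline{B}$, the equation in (P) reads $D_0\Delta\phi=\nabla\cdot\bG$ in $\calD'(\Rm^d)$ ($D_0$ being constant).

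For existence, I would apply the Lax--Milgram lemma to the bilinear form $a(u,v)=\int_{\Rm^d}(D_0+D_1)\nabla u\cdot\nabla v\,d\bx$ and the linear form $\ell(v)=\int_B\bF\cdot\nabla v\,d\bx$ on the homogeneous Sobolev space $\calD^{1,2}(\Rm^d)=\{u\in L^{2d/(d-2)}(\Rm^d):\nabla u\in(L^2(\Rm^d))^d\}$ when $d\ge3$ (a Hilbert space for $(\nabla u,\nabla v)_{L^2}$ by the Sobolev inequality), and on the corresponding weighted space carrying a logarithmic weight at infinity when $d=2$ (where $a$ is coercive only modulo constants, the additive constant being fixed afterwards by the decay requirement). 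Here $a$ is bounded because $D_1\in L^\infty$ and coercive because $D_0+D_1\ge C_0$, and $\ell$ is bounded by the Cauchy--Schwarz inequality; Lax--Milgram then produces a unique $\phi$ with $\nabla\phi\in L^2(\Rm^d)$ satisfying $a(\phi,v)=\ell(v)$ for all admissible $v$, and the choice $v=\phi$ gives $C_0\|\nabla\phi\|^2_{L^2(\Rm^d)}\le\|\bF\|_{L^2(B)}\|\nabla\phi\|_{L^2(\Rm^d)}$, i.e. the first estimate in \eqref{estimappendphi}. (Existence could alternatively be obtained by a Fredholm argument on \eqref{integappend}.)

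Next I would turn to the representation. Since $D_0\Delta\phi=\nabla\cdot\bG$ in $\calD'(\Rm^d)$ with $\bG\in(L^2(\Rm^d))^d$ compactly supported, convolution with $\Gamma$ yields $D_0\phi(\by)=\int_{\Rm^d}\nabla_\bx\Gamma(\bx-\by)\cdot\bG(\bx)\,d\bx$, which is \eqref{integappend}; the variational $\phi$ agrees with this potential because their difference is harmonic on $\Rm^d$, has $L^2$ gradient, and decays (it belongs to $L^{2d/(d-2)}$ when $d\ge3$ and is controlled by the weight when $d=2$), hence is a constant which the normalization forces to vanish. Since $\nabla\Gamma\in L^1_{\textrm{loc}}(\Rm^d)$ with $\nabla\Gamma(\bz)=\calO(|\bz|^{1-d})$, Young's inequality applied to \eqref{integappend} gives $\phi\in L^2_{\textrm{loc}}(\Rm^d)$ with $\|\phi\|_{L^2(A)}\le CD_0^{-1}\bigl(\|\bF\|_{L^2(B)}+\|D_1\|_{L^\infty}\|\nabla\phi\|_{L^2(\Rm^d)}\bigr)$ for bounded $A$, which combined with the first estimate yields the second estimate in \eqref{estimappendphi} and $\phi\in H^1_{\textrm{loc}}(\Rm^d)$; and for $|\by|$ large and $\bx\in B$ one has $\nabla_\bx\Gamma(\bx-\by)=\calO(|\by|^{1-d})$ uniformly, so $\phi(\by)=\calO(|\by|^{1-d})$. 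Outside $\overline{B}$ we have $D_1\equiv0$ and $\bF\equiv0$, hence $\Delta\phi=0$ there and interior elliptic regularity \cite{gt1} gives $\phi\in\calC^\infty(\Rm^d\backslash\overline{B})$. Finally, applying the Laplacian to \eqref{integappend} recovers the equation of (P), so by the uniqueness below, \eqref{integappend} characterizes $\phi$ \textit{a.e.} on every bounded set.

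For uniqueness in $H^1_{\textrm{loc}}(\Rm^d)\cap\calC^\infty(\Rm^d\backslash\overline{B})$, let $w$ be the difference of two solutions; then $\nabla\cdot(D_0+D_1)\nabla w=0$ in $\calD'(\Rm^d)$, $w$ is harmonic outside $\overline{B}$ and $w=\calO(|\bx|^{1-d})$, so the interior gradient estimate for harmonic functions gives $\nabla w=\calO(|\bx|^{-d})$. Testing with $w$ on a ball $B_R$ with $\overline{B}\subset B_R$ and boundary $S_R$, and integrating by parts (so that $D_1\equiv0$ on $S_R$),
\[
\int_{B_R}(D_0+D_1)|\nabla w|^2\,d\bx=D_0\int_{S_R}\frac{\partial w}{\partial\bn}\,w\,d\sigma=\calO\bigl(R^{d-1}\,R^{-d}\,R^{1-d}\bigr)=\calO(R^{-d})\longrightarrow0,
\]
whence $\nabla w\equiv0$ because $D_0+D_1\ge C_0>0$, so $w$ is a constant that the decay forces to be zero. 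The step I expect to be most delicate is existence when $d=2$: selecting the weighted homogeneous space so that Lax--Milgram applies, normalizing the additive constant, and checking that the resulting energy solution is genuinely the $\calO(|\bx|^{-1})$-decaying Newtonian potential---which is where the vanishing of the total source $\int_{\Rm^2}\nabla\cdot\bG=0$, automatic for a compactly supported $L^2$ field, is used. The case $d\ge3$ and the Fubini/Young justification of \eqref{integappend} are routine.
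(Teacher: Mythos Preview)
Your argument is correct and takes a genuinely different route from the paper. The paper reduces (P) to a problem (P2) on a large ball $B_R$ by introducing the exterior Dirichlet--to--Neumann operator $\Lambda$ on $S_R$, applies Lax--Milgram to (P2) (coercivity coming from a Poincar\'e-type inequality together with the sign of $\langle\Lambda u,u\rangle$), and then extends the solution harmonically to $\Rm^d\backslash\overline{B_R}$; the decay is read off a layer-potential representation on $S_R$, and the integral equation \eqref{integappend} is derived afterwards by inserting test functions $v(\bx)=\int_{B_R}\Gamma(\bx-\by)\psi(\by)\,d\by$ into the weak form and justifying Fubini through the Hardy--Littlewood--Sobolev inequality (the paper's Lemma~\ref{lem:sobolev}). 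By contrast, you work directly on $\Rm^d$ in the homogeneous space $\calD^{1,2}$, obtain the gradient estimate immediately from $v=\phi$, and get \eqref{integappend} as a Newtonian-potential identity rather than through a test-function computation.

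Your approach is cleaner for $d\ge3$: coercivity is automatic, the DtN machinery is avoided, and the integral equation, the decay, and the local $L^2$ bound all fall out of the single representation $D_0\phi=\int_B\bG\cdot\nabla_\bx\Gamma(\bx-\by)\,d\bx$ at once. The paper's reduction to a bounded domain, on the other hand, treats all $d\ge2$ uniformly and sidesteps exactly the point you flag as delicate, namely the choice of functional setting in dimension two; there the DtN construction delivers existence, decay, and the energy estimate without having to introduce a weighted space or normalize an additive constant. Your uniqueness argument via the energy identity on $B_R$ is essentially the same estimate the paper uses in related places (e.g.\ \eqref{weakFproof}), whereas the paper's uniqueness for (P) itself is inherited from Lax--Milgram on (P2).
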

\begin{proof} 
    We show that $(P)$ is equivalent to a problem posed on a bounded
  domain that can be solved with the Lax-Milgram lemma. To do
  so, let $B_R$ be the sphere of radius $R$ with $B \subset \subset B_R$
  and denote by $S_R$ its boundary. Consider the solution $\phi$ to
  (P) with the announced regularity. Since both $D_1$ and $\bF$ are
  supported in $B$, the function $\phi$ is harmonic in $\Rm^d
  \backslash \overline{B}$ and in particular in $\Rm^d \backslash
  \overline{B_R}$. Denoting by $\Lambda : H^{\frac12}(S_R) \to
  H^{-\frac12}(S_R) $ the exterior Dirichlet-Neumann operator on the
  sphere $S_R$, we then have the standard relation
  $$
  \frac{\partial \phi}{ \partial \bn}=\Lambda \left. \phi
  \right|_{S_R},
  $$
  where $\frac{\partial \phi}{ \partial \bn}$ is the outer normal
  derivative of $\phi$ on $S_R$ and $\left. \phi \right|_{S_R}$ its
  outer trace. Since $\phi$ is harmonic in $\Rm^d \backslash
  \overline{B}$ and is thus of class $\calC^\infty$ on this set,
  $\frac{\partial \phi}{ \partial \bn}$ and $\left. \phi
  \right|_{S_R}$ are continuous across $S_R$. Using this fact and
  integrating (P) against a test function $v \in
  \calC^\infty(\overline{B_R})$, we find
  $$\int_{B_R} (D_0+D_1)\nabla \phi \cdot \nabla v \, d\bx - D_0\langle
  \Lambda \left. \phi \right|_{S_R}, \left. v \right|_{S_R} \rangle
  =\int_B \bF \cdot \nabla v \,d\bx,$$
  where $\langle \cdot,
  \cdot\rangle$ denotes the $H^{\frac12}(S_R)-H^{-\frac12}(S_R)$ duality
  product. The restriction of $\phi$ to $B_R$ is therefore a solution to
  the following variational problem (P2): Find $u \in H^1(B_R)$
  such that
$$
a(u,v)=l(v), \qquad \forall v \in H^1(B_R),
$$
with obvious notation for the bilinear form $a$ and the linear
form $l$. Let us assume for the moment the existence of a unique
solution $u$ to (P2). That solution can be extended to a function
$u^*$ solution to $(P)$. Let indeed $u^*$ be defined as:
$$
\left\{
\begin{array}{l}
\DST u^*=u, \qquad \textrm{ in } B_R,\\
\DST u^*=U, \qquad \textrm{ in } \Rm^d \backslash \overline{B_R},
\end{array} \right.
$$
where $U$ is the solution to
\begin{eqnarray*}
\Delta U&=&0 \qquad \textrm{ in  } \calD'(\Rm^d\backslash \overline{B_R}),\\
\left. U \right|_{S_R}&=&\left. u \right|_{S_R},\qquad
U(\bx)\to 0\quad \textrm{as} \quad |\bx|\to \infty. 
\end{eqnarray*}
By construction, the trace of $u^*$ is continuous across $S_R$.
Since $U$ is harmonic in $\Rm^d \backslash \overline{B_R}$
and vanishes at infinity, it also verifies:
\begin{math}
  \frac{\partial U}{ \partial \bn}=\Lambda \left. U \right|_{S_R}=\Lambda \left. u \right|_{S_R}.
\end{math}
It then suffices to integrate the equation solved by $U$ against a
test function $v\in \calC^\infty_0(\Rm^d)$ and to consider (P2) to
find
$$\int_{\Rm^d} (D_0+D_1)\nabla u^* \cdot \nabla v \, d\bx =\int_B \bF
\cdot \nabla v \,d\bx, \qquad \forall v\in \calC^\infty_0(\Rm^d),$$
so
that $u^*$ solves (P). The above equation also implies that $u^*$ is
harmonic in $\Rm^d \backslash \overline{B}$ and is thus of class
$\calC^\infty$ on this set. It remains to verify the behavior at the
infinity, which stems from the fact that $\bF$ has compact support in
$B_R$.  Setting $v=1$ in (P2) yields $\langle \Lambda \left. u
\right|_{S_R}, 1 \rangle=0$. Getting back to $U$, since its trace and
its normal derivative are known and given by $\left. u \right|_{S_R}$
and $\Lambda \left. u \right|_{S_R}$, respectively, it admits the
following representation formula, for $\bx \in \Rm^d\backslash
\overline{B_R}$:
$$
U(\bx)=\int_{S_R} \left. u \right|_{S_R}(\by) \frac{\partial \Gamma(\bx-\by)}{\partial \bn_\by} d\sigma(\by)-\langle \Lambda \left. u\right|_{S_R}, \Gamma(\bx-\cdot)  \rangle,
$$
where $\Gamma$ is the fundamental solution of the Laplacian in
(\ref{eq:G}) and $\sigma$ is the surface measure on $S_R$. We conclude
by noticing that, as $|\bx| \to \infty$:
$$
\langle \Lambda \left. u\right|_{S_R}, \Gamma(\bx-\cdot) \rangle=\langle \Lambda \left. u\right|_{S_R}, \Gamma(\bx-\cdot)-\Gamma(\bx) \rangle=\calO(|\bx|^{1-d}).
$$

It remains to show the existence of a unique solution to (P2). This is
a consequence of the Lax-Milgram lemma: $a$ and $l$ are both
continuous in $H^1(B_R)$ and the coercivity follows from the
Poincar\'e-type inequality:
$$
\| u \|_{L^2(B_R)} \leq C \left( \|\nabla u \|_{L^2(B_R)} + \| u \|_{L^2(S_R)} \right), \qquad \forall u \in H^1(B_R),
$$
and the relation 
$$
C \| u \|^2_{L^2(S_R)}\leq - \langle \Lambda \left. u \right|_{S_R},  
\left. u \right|_{S_R} \rangle, \qquad \forall u \in H^{\frac12}(S_R).
$$

We now prove the first estimate in (\ref{estimappendphi}). Let $v \in
\calC^\infty_0(\Rm^d)$ such that
\begin{equation} \label{limitv}
\frac{1}{R^d}\|v\|_{L^1(S_R)} \to 0 \textrm{ as } R \to \infty.
\end{equation}
Integrating (P) against $v$ yields
$$\int_{B_R} (D_0+D_1)\nabla \phi \cdot \nabla v \, d\bx -
D_0\int_{S_R}\frac{\partial \phi}{ \partial \bn} v d\sigma=\int_B \bF
\cdot \nabla v \,d\bx.$$
Since $\nabla \phi(\bx)=\calO(|\bx|^{-d})$ as
$\bx$ tends to infinity, it belongs to $L^p(\Rm^d\backslash
\overline{B_\rho})$ for some $p>1$ and a ball of radius $\rho$ with $B
\subset \subset B_\rho$.  The above equality also holds by density for
all $v\in V_\rho$, the space of functions $v$ such that $v \in
H^1_\textrm{loc}(\Rm^d)$, $v$ verifies (\ref{limitv}) and $\nabla v
\in L^{p'}(\Rm^d\backslash \overline{B_\rho})$ for
$\frac{1}{p'}+\frac1p=1$. Since $\frac{\partial \phi}{ \partial
  \bn}=\calO(R^{-d}) $, sending $R$ to infinity implies, together with
(\ref{limitv}), that the boundary term goes to zero. On the other
hand, the function $\nabla \phi \cdot \nabla v$ is integrable on
$\Rm^d$ for $v\in V_\rho$, which allows us to use the Lebesgue
dominated convergence theorem and obtain as $R \to \infty$:
\begin{equation} \label{weakform} \int_{\Rm^d} (D_0+D_1)\nabla \phi \cdot \nabla v \, d\bx =\int_B \bF \cdot \nabla v \,d\bx,
\end{equation} 
for all $v\in V_\rho$. Since $\phi\in V_\rho$ for any $d\geq 2$, we
obtain the left estimate of (\ref{estimappendphi}).

Let us now consider the integral equation (\ref{integappend}) and show
that the solution to (P) verifies (\ref{integappend}). For $\psi \in
L^2(B_R)$, let $v(\bx)=\int_{B_R} \Gamma(\bx-\by) \psi(\by) d\by$ for
a given ball $B_R$. Since $\Gamma \in W^{1,1}_\textrm{loc}(\Rm^d)$, it
follows from the Young inequality that $v \in
H^1_\textrm{loc}(\Rm^d)$. Set $\bx \in \Rm^d \backslash
\overline{B_{R'}}$ with $B_R \subset \subset B_{R'}$. Then $\nabla
\Gamma(\cdot-\by) \in L^p(\Rm^d\backslash \overline{B_{R'}})$ for
$p>\frac{d}{d-1}$ and $\by \in B_R$. Such a function $v$ also
satisfies $(\ref{limitv})$ for $d\geq2$ since $\Gamma(\bx-\by)$ grows
at worst as $\log|\bx|$ for $(\bx,\by)\in \Rm^d \backslash
\overline{B_{R'}} \times B_R$. We can thus use $v$ as a test function
in $(\ref{weakform})$. In order to use the Fubini theorem, we notice
that the function $\nabla v(\bx) \cdot \nabla \Gamma(\bx-\by)
\psi(\by) \un_{B_R}(\by)$ belongs to $L^1(\Rm^d \times \Rm^d)$ thanks
to the Sobolev inequality \cite{RS-80-4} recalled in lemma
\ref{lem:sobolev} in the appendix since $\nabla v \in L^2(\Rm^d)$ and
$\psi \in L^2(B_R)$.  Indeed, since $R<\infty$, we bound the
$L^q(\Rm^d)$ norm of $\psi(\by) \un_{B_R}(\by)$ by the $L^2(B_R)$ norm
of $\psi$ for $q=\frac{2d}{d+2}\leq2$. Then choose $p=2$ and
$\lambda=d-1$ in lemma \ref{lem:sobolev}.

The same conclusion holds for $\bF(\bx) \cdot \nabla \Gamma(\bx-\by)
\psi(\by) \un_{B_R}(\by)$ so that we obtain from (\ref{weakform}):
$$ 
D_0\int_{B_R} \Big( \int_{\Rm^d} \nabla \phi(\bx) \cdot \nabla \Gamma(\bx-\by) \, d\bx \Big) \psi(\by) d\by
$$
\begin{equation} \hspace{1.5cm}\label{weakform2} 
=-\int_{B_R} \Big( \int_{\Rm^d} D_1(\bx)\nabla \phi(\bx) \cdot \nabla \Gamma(\bx-\by) -\bF(\bx) \cdot \nabla \Gamma(\bx-\by)\Big) \psi(\by) d\by.
\end{equation} 
It thus only remains to show that $\int_{\Rm^d} \nabla \phi(\bx) \cdot
\nabla \Gamma(\bx-\by) \, d\bx=\phi(\by)$ \textit{a.e.} on $B_R$ to
conclude. To this aim, consider a sequence $\phi^n$ of
$\calC^\infty_0(\Rm^d)$ functions such that $\nabla \phi^n \to \nabla
\phi$ in $L^2(\Rm^d)$ and $\phi^n \to \phi $ in $L^2(A)$ for any
bounded set $A$. Since $-\Delta_\bx \Gamma(\bx-\by)=\delta(\bx-\by)$
in the distribution sense, we have, for any $\by \in \Rm^d$:
$$
\lim_{\eps \to 0} \int_{|\bx-\by|>\eps} \Gamma(\bx-\by) \Delta \phi^n(\bx)d\bx=- \phi^n(\by).
$$
The Lebesgue dominated convergence theorem yields consequently:
$$
\lim_{\eps \to 0} \int_{B_R}\left(\int_{|\bx-\by|>\eps} \Gamma(\bx-\by) \Delta \phi^n(\bx)d\bx\right) \psi(\by) d\by=- \int_{B_R}\phi^n(\by) \psi(\by) d\by.
$$
An integration by parts then gives:
\begin{eqnarray*}
\int_{|\bx-\by|>\eps} \Gamma(\bx-\by) \Delta \phi^n(\bx)d\bx &=&\int_{|\bx-\by|=\eps}
\frac{\partial \phi^n(\bx)}{\partial \bn}  \Gamma(\bx-\by) d\sigma(\bx)\\
&&-\int_{|\bx-\by|>\eps} \nabla \Gamma(\bx-\by) \cdot \nabla \phi^n(\bx)d\bx.
\end{eqnarray*}
The boundary integral goes to zero with $\eps$. For the other term, we
remark that the function $\un_{|\bx-\by|>\eps}\un_{B_R} \nabla
\Gamma(\bx-\by) \cdot \nabla \phi^n(\bx) \psi(\by)$ converges
\textit{a.e.} in $\Rm^d \times \Rm^d$ to $\un_{B_R} \nabla
\Gamma(\bx-\by) \cdot \nabla \phi^n(\bx) \psi(\by)$ which belongs to
$L^1(\Rm^d \times \Rm^d)$ thanks to the Sobolev inequality. Applying
again the Lebesgue dominated convergence theorem yields
$$
\int_{B_R} \left( \int_{\Rm^d} \nabla \phi^n(\bx) \cdot \nabla \Gamma(\bx-\by) \, d\bx \right) \psi(\by) d\by=
\int_{B_R} \phi^n(\by) \psi(\by) d\by,
$$
and it suffices to pass to the limit in the sequence $\phi^n$ to
conclude. This proves that the solution to (P) satisfies
(\ref{integappend}). Conversely, considering a solution of
(\ref{integappend}) in $H^1_\textrm{loc}(\Rm^d) \cap
\calC^\infty(\Rm^d \backslash \overline{B})$, we verify using the same
techniques as above that this solution also satisfies (P), which we
know admits a unique solution. Therefore, the integral equation
(\ref{integappend}) also admits a unique solution. The second estimate
of $(\ref{estimappendphi})$ follows from $(\ref{integappend})$, the
Young inequality and the first estimate of $(\ref{estimappendphi})$.
\end{proof}
\bigskip

\begin{lemma} 
\label{lem:sobolev}
Sobolev inequality (see e.g. \cite{RS-80-4}). Let $f \in L^p(\Rm^d)$,
$g \in L^q(\Rm^d)$, $1<p,q<\infty$, $0<\lambda<d$ with the relation
$\frac{1}{p}+\frac{1}{q}+\frac{\lambda}{d}=2$. Then:
  $$
  \int_{\Rm^d}\int_{\Rm^d}\frac{|f(\bx)g(\by)|}{|\bx-\by|^{\lambda}}
  d\bx d\by \leq C \|f\|_{L^p(\Rm^d)} \|g\|_{L^q(\Rm^d)}.
$$
\end{lemma}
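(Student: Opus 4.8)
The statement is the Hardy--Littlewood--Sobolev inequality, and the plan is to prove it by Hedberg's classical argument, reducing it to the Hardy--Littlewood maximal theorem. Since the integrand involves $|f|$ and $|g|$, I would first assume without loss of generality that $f,g\ge 0$. Writing the kernel $K(\bx)=|\bx|^{-\lambda}$ and the convolution $(K*f)(\bx)=\int_{\Rm^d}|\bx-\by|^{-\lambda}f(\by)\,d\by$, Fubini's theorem and H\"older's inequality give
$$
\int_{\Rm^d}\int_{\Rm^d}\frac{f(\bx)g(\by)}{|\bx-\by|^{\lambda}}\,d\bx\,d\by=\int_{\Rm^d} g\,(K*f)\,d\bx\le \|g\|_{L^q(\Rm^d)}\,\|K*f\|_{L^{q'}(\Rm^d)},\qquad \tfrac{1}{q'}=1-\tfrac1q .
$$
Thus it suffices to establish the convolution bound $\|K*f\|_{L^{q'}}\le C\|f\|_{L^p}$, and the hypothesis $\frac1p+\frac1q+\frac\lambda d=2$ is exactly the scaling-consistent relation $\frac{1}{q'}=\frac1p-\frac{d-\lambda}{d}$ for convolution against a kernel homogeneous of degree $-\lambda$.

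For the convolution bound I would split the kernel at an adjustable radius $R>0$, writing $(K*f)(\bx)=I_R(\bx)+J_R(\bx)$ where $I_R$ integrates over $|\by|<R$ and $J_R$ over $|\by|\ge R$. Decomposing the ball $\{|\by|<R\}$ into dyadic annuli $R2^{-k-1}\le|\by|<R2^{-k}$, on each annulus one has $|\by|^{-\lambda}\le (R2^{-k-1})^{-\lambda}$ while the average of $f$ over the enclosing ball is $\le C\,Mf(\bx)$, where $M$ is the Hardy--Littlewood maximal operator; summing the geometric series (which converges \emph{precisely because} $\lambda<d$) yields $I_R(\bx)\le C R^{d-\lambda}Mf(\bx)$. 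For the far part, H\"older gives $J_R(\bx)\le\|f\|_{L^p}\big(\int_{|\by|\ge R}|\by|^{-\lambda p'}d\by\big)^{1/p'}$, and the tail converges because $\lambda p'>d$, which is equivalent to $q>1$; this produces $J_R(\bx)\le C\|f\|_{L^p}R^{d/p'-\lambda}$ with exponent $d/p'-\lambda<0$.

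Optimizing $(K*f)(\bx)\le C\big(R^{d-\lambda}Mf(\bx)+R^{d/p'-\lambda}\|f\|_{L^p}\big)$ over $R$ by choosing $R=(\|f\|_{L^p}/Mf(\bx))^{p/d}$ balances the two terms and gives the pointwise estimate
$$
(K*f)(\bx)\le C\,\big(Mf(\bx)\big)^{1-\theta}\,\|f\|_{L^p}^{\theta},\qquad \theta=\frac{p(d-\lambda)}{d}.
$$
Raising to the power $q'$ and integrating, the arithmetic $\frac1{q'}=\frac1p-\frac{d-\lambda}{d}$ forces $(1-\theta)q'=p$, so that $\|K*f\|_{L^{q'}}^{q'}\le C\|f\|_{L^p}^{\theta q'}\,\|Mf\|_{L^p}^{p}$. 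Invoking the Hardy--Littlewood maximal theorem $\|Mf\|_{L^p}\le C\|f\|_{L^p}$ for $1<p<\infty$, together with the identity $\theta q'+p=q'$, collapses this to $\|K*f\|_{L^{q'}}\le C\|f\|_{L^p}$, which closes the argument.

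The genuine analytic content, and the step I expect to be the main obstacle in a fully self-contained treatment, is the $L^p$-boundedness of the maximal operator for $p>1$; its proof rests on a Vitali covering lemma together with Marcinkiewicz interpolation between the weak $(1,1)$ and the $(\infty,\infty)$ bounds, whereas everything else above is elementary. It is exactly this ingredient that demands $p>1$ (and by the symmetric role of $g$, $q>1$), while $\lambda<d$ is forced by the near dyadic sum; these are precisely the strict hypotheses of the lemma. An alternative, cleaner packaging of the same facts that I would mention is to observe that $K\in L^{d/\lambda,\infty}(\Rm^d)$, since $|\{\,|\bx|^{-\lambda}>t\,\}|=C\,t^{-d/\lambda}$, and then apply the weak Young convolution inequality (itself a Marcinkiewicz-interpolation statement) to obtain $K*:L^p\to L^{q'}$ under the identical exponent relation, bypassing the explicit optimization in $R$.
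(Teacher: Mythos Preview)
Your proof via Hedberg's maximal-function argument is correct and complete; the arithmetic of the exponents checks out, and you have correctly identified where each of the strict hypotheses $p>1$, $q>1$, $\lambda<d$ enters. The paper, however, does not prove this lemma at all: it is stated as the classical Hardy--Littlewood--Sobolev inequality with a citation to Reed--Simon \cite{RS-80-4}, and is used only as a black box (specifically, to justify Fubini in the proof of Lemma~\ref{lemappend1} and in the proof of Proposition~\ref{prop:jump}). So there is no approach in the paper to compare against; you have supplied a full proof where the authors simply invoked the literature.
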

The following lemma, which is a standard variational formulation of
the Fredholm alternative, is used several times in the paper.
\begin{lemma} \label{lemappend}
  Let $H$ be a Hilbert space and let $a(\cdot,\cdot)$ be a bilinear
  form on a $H \times H$ such that 
  $a(\cdot,\cdot)=a_0(\cdot,\cdot)+a_1(\cdot,\cdot)$, where both $a_0$
  and $a_1$ are continuous in $H$ and $a_0$ is $H$-coercive. Assume
  moreover, that for two sequences $u_n$ and $v_n$ weakly converging
  in $H$ to $u$ and $v$, we have
  $$
  a_1(u_n,v_n) \to a_1(u,v).
  $$
  Then, if the following assertion is verified
$$
\left( a(u,v)=0,\quad \forall v \in H\right) \Longrightarrow u=0,
$$
for all $ f$ in $H'$, there exists a  unique $u \in H$ which satisfies
$$
a(u,v)=\langle f,v \rangle,\qquad \forall v \in H.
$$
Here, $\langle \cdot,\cdot \rangle$ denotes the $H'$-$H$ duality product.
Moreover, $u$ verifies the estimate, for some positive constant $C$:
$$
\|u\|_{H} \leq C \| f\|_{H'}.
$$  
\end{lemma}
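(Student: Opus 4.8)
The plan is to recast the variational identity as an operator equation of the form $(I+K)u=F$ with $K$ compact, and then apply the classical Fredholm alternative together with the injectivity hypothesis. First I would invoke the Riesz representation theorem to turn the bilinear forms into bounded operators: since $a_0$ and $a_1$ are continuous on $H$, there are bounded $A_0,A_1:H\to H$ with $a_0(u,v)=(A_0u,v)_H$ and $a_1(u,v)=(A_1u,v)_H$, and there is $F\in H$ with $\langle f,v\rangle=(F,v)_H$ and $\|F\|_H=\|f\|_{H'}$. Coercivity of $a_0$ together with Lax--Milgram shows that $A_0$ is an isomorphism of $H$ with $\|A_0^{-1}\|\le 1/\alpha$, where $\alpha$ is the coercivity constant of $a_0$. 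The condition ``$a(u,v)=\langle f,v\rangle$ for all $v\in H$'' is then equivalent to $(A_0+A_1)u=F$, i.e.\ to $u+A_0^{-1}A_1u=A_0^{-1}F$, so it suffices to study $I+K$ with $K:=A_0^{-1}A_1$.

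The key step is to prove that $A_1$, hence $K$, is compact. Given a bounded sequence $u_n$ in $H$, reflexivity lets me pass to a weakly convergent subsequence $u_{n_k}\rightharpoonup u$. Setting $w_k:=A_1u_{n_k}-A_1u$, boundedness of $A_1$ gives $w_k\rightharpoonup 0$ while $u_{n_k}-u\rightharpoonup 0$; feeding these two weakly null sequences into the hypothesis on $a_1$ yields
$$
\|w_k\|_H^2=(A_1(u_{n_k}-u),w_k)_H=a_1(u_{n_k}-u,\,w_k)\longrightarrow a_1(0,0)=0 .
$$
Thus $A_1u_{n_k}\to A_1u$ strongly, so $A_1$ sends bounded sets to relatively compact sets and is compact; consequently $K=A_0^{-1}A_1$ is compact as well.

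Finally I would apply the Fredholm alternative to $I+K$: for a compact perturbation of the identity, injectivity is equivalent to bijectivity with bounded inverse. Injectivity is precisely the stated hypothesis, because $(I+K)u=0$ is equivalent to $(A_0+A_1)u=0$, i.e.\ to $a(u,v)=0$ for all $v\in H$, which forces $u=0$. Hence $I+K$ is invertible, $u=(I+K)^{-1}A_0^{-1}F$ is the unique solution, and
$$
\|u\|_H\le \|(I+K)^{-1}\|\,\|A_0^{-1}\|\,\|F\|_H=:C\,\|f\|_{H'} ,
$$
which is the asserted estimate.

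The only genuine obstacle is the compactness step; the rest is bookkeeping with Riesz representations and Lax--Milgram. The care needed there is to exploit the bilinear weak-continuity hypothesis correctly, namely by testing it against the pair $u_{n_k}-u$ and $A_1(u_{n_k}-u)$ (both weakly null) rather than attempting to pass to the limit one argument at a time.
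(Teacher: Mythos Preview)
Your proof is correct and follows essentially the same route as the paper: represent $a_0$ and $a_1$ by bounded operators via Riesz/Lax--Milgram, use the joint weak-continuity hypothesis on $a_1$ to show that $A_1$ (hence $K=A_0^{-1}A_1$) is compact by testing against $w_k=A_1u_{n_k}-A_1u$, and then invoke the Fredholm alternative together with the injectivity hypothesis. The paper's argument is the same up to notation (it writes $S$ for your $A_0^{-1}$ and splits $\|A_1u_n-A_1u\|^2$ as $(A_1u_n,v_n)-(A_1u,v_n)$ rather than $a_1(u_{n_k}-u,w_k)$, but the idea is identical).
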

\begin{proof}
  We sketch a proof for completeness.  Since $a_0$ is coercive, we
  know from the Lax-Milgram theory the existence of a bounded and
  boundedly invertible operator $S$ on $H$ such that
  $a_0(u,v)=(S^{-1}u,v)$, where $(\cdot, \cdot)$ is the inner product
  on $H$. By the Riesz representation theorem, we similarly know the
  existence of a bounded operator $A_1$ such that $a_1(u,v)=(A_1u,v)$.
  The hypotheses on $a_1$ imply that $A_1$ is compact on $H$. Indeed,
  choose $u_n \rightharpoonup u$ and define $v_n=A_1u_n-A_1u$. We
  verify that $v_n \rightharpoonup 0$ and that
  $\|A_1u_n-A_1u\|^2=(A_1u_n,v_n)-(A_1u,v_n)$ converges to $0$ by the
  above hypothesis on $a_1$ so that $A_1$ maps weakly converging
  sequences to strongly converging sequences and is thus compact.
  
  Now by the Riesz representation theorem, there exists $\tilde{f} \in
  H$ such that $\langle f,v \rangle=(\tilde{f},v)$, for all $v \in H$,
  so that $a(u,v)=\langle f,v \rangle$ is equivalent to
  $(S^{-1}+A_1)u=\tilde f$ and thus equivalent to $(I+SA_1)u=S \tilde
  f$, which admits a unique solution if and only if $-1$ is not an
  eigenvalue of the compact operator $SA_1$, which is equivalent to
  the fact that $a(u,v)=0$ for all $v\in H$ implies that $u=0$.
\end{proof}

\section*{Acknowledgments}
This work was funded in part by the National Science Foundation under
Grants DMS-0239097 and DMS-0554097.

{\footnotesize 

}    

\end{document}